\documentclass[12pt,a4paper]{article} 

\usepackage[utf8]{inputenc} 
\usepackage[T1]{fontenc}

\usepackage{amsmath}
\usepackage{amsthm}
\usepackage{amssymb}
\usepackage{amsfonts}
\usepackage{dsfont} 
\usepackage{url}
\usepackage{graphicx}
\usepackage{hyperref}
\usepackage{color}
\usepackage{tikz}
\usepackage{pgfplots}
\usetikzlibrary{shapes.misc}
\usepackage{enumerate}
\usepackage{fullpage}

\usepackage{epsfig,graphicx,color} 
\definecolor{darkblue}{rgb}{.2, 0.2,.8}
\definecolor{darkgreen}{rgb}{0,0.5,0.3}
\definecolor{darkred}{rgb}{.8, .1,.1}

\theoremstyle{plain}
\newtheorem{theorem}{Theorem}[section]
\newtheorem{lemma}[theorem]{Lemma}
\newtheorem{proposition}[theorem]{Proposition}

\newtheorem{as}{Assumption}

\pgfplotsset{compat=1.13}
\setlength{\parindent}{0pt}


\renewcommand{\P}{\mathbb{P}} 
\newcommand{\E}{\mathbb{E}} 
\newcommand{\N}{\mathbb{N}} 
\newcommand{\R}{\mathbb{R}} 
\newcommand{\1}{\mathds{1}} 
\newcommand{\limn}{\lim_{n \to \infty}}

\newcommand{\const}{{\rm const}}

\newcommand{\Addresses}{{%
  \bigskip
  \footnotesize

  P.~Dyszewski \textsc{Instytut Matematyczny Uniwersytetu Wroc\l{}awskiego, Pl. Grunwaldzki 2/4 50-384, 
   Wroc\l{}aw, Poland}\par\nopagebreak
   \textsc{Fakultät für Mathematik, Technische Universität München, 
  Boltzmannstr.~3, 85748 Garching, Germany}\par\nopagebreak
 \textit{E-mail address:} \texttt{piotr.dyszewski@math.uni.wroc.pl}

  \medskip

  N.~Gantert, \textsc{Fakultät für Mathematik, Technische Universität München, 
  Boltzmannstr.~3, 85748 Garching, Germany}\par\nopagebreak
  \textit{E-mail address:} \texttt{gantert@ma.tum.de}
  
  \medskip
  
  T.~Höfelsauer, \textsc{Fakultät für Mathematik, Technische Universität München, 
  Boltzmannstr.~3, 85748 Garching, Germany}\par\nopagebreak

}}

\begin{document}

\title{The maximum of a branching random walk with stretched exponential tails}
\author{Piotr Dyszewski, Nina Gantert and Thomas H\"ofelsauer}

\maketitle

\begin{abstract}
  We study a one-dimensional branching random walk in the case when the step size distribution has a stretched exponential tail, and, in particular, no finite exponential moments. The tail of the step size $X$ decays as $\P[X > t] \sim a \exp\{-\lambda t^r\}$ for some constants $a, \lambda > 0$ where $r \in (0,1)$. We give a detailed description of the asymptotic behaviour of the position of the rightmost particle, proving almost sure limit theorems, convergence in law and some integral tests. The limit theorems reveal interesting differences betweens the two regimes $ r \in (0, 2/3)$ and $ r \in (2/3, 1)$, with yet different limits in the boundary case $r = 2/3$.

\medskip
\noindent \textbf{Keywords:} branching random walk, stretched exponential random variables, limit theorems, point processes, extreme values

\smallskip
\noindent \textbf{AMS 2000 subject classification:} 60F10, 60J80, 60G50. 
\end{abstract}

{\bf Resum\'e} Nous \'etudions une marche al\'eatoire branchante uni-dimensionelle quand les  d\'eplacements n'ont pas des moments exponentiels. Plus pr\'ecisement, la queue d'un d\'eplacement $X$ se comporte comme $\P[X > t] \sim a \exp\{-\lambda t^r\}$ pour des constantes $a, \lambda > 0$ et $r \in (0,1)$. Nous donnons une description d\'etaill\'ee du comportement asymptotique du maximum, en montrant des lois limites presque s\^ures, des theorèmes de convergence en loi et des tests int\'egrals. Ces lois limites diverses font appara\^itre des diff\'erences inter\'essantes entre les deux r\'egimes $ r \in (0, 2/3)$ et $ r \in (2/3, 1)$, et le cas critique $r = 2/3$ est encore diff\'erent.

\section{Introduction} \label{intro}

	We study branching random walk, which is a discrete time Galton-Watson process with a spatial component. Given a reproduction law with expectation $m>1$ and a step size distribution represented by a 
centred random variable $X$ the 
	evolution of the branching random walk can be described as follows.  At time $n=0$ we place one particle at the origin of the real line $\R$. At time $n=1$ this particle splits according to 
	the reproduction law and each new particle performs an independent step, according to the step size distribution. We assume that the branching mechanism and the displacements 
	are independent. The~particles evolve in the same way,
	independently of other particles. We refer to Section~\ref{prel} for a more detailed description of the model. \\
	
	We are interested in the position of the rightmost particle at time $n$, which we will denote by $M_n$. In the case when the step size distribution
        has (some) exponential moments,
	the asymptotic behaviour of $M_n$ is fairly well understood (see the recent monograph~\cite{S15} and references therein).
        We will investigate the case of steps with stretched exponential distribution, when the upper tail of $X$  is of the form
	\begin{equation*}
		\P[X >  x] = a(x) e^{-\lambda x^r}
	\end{equation*} 
	where $a(x) \to a$ as $x \to \infty$ for some constants $\lambda, a>0$ and $r \in (0,1)$. 
The law of large numbers for $M_n$ proved in~\cite{G00} asserts that, under some mild technical conditions, 
almost surely on the set of survival 
	\begin{equation*}
		\lim_{n \to \infty }\frac{M_n}{n^{1/r}} = \alpha:=\left( \frac{\log m}{\lambda} \right)^{1/r}.
	\end{equation*} 
	In the present article we provide a more detailed description of  $M_n$. More precisely we investigate the second term in the asymptotic expansion and prove almost sure convergence of 
	\begin{equation*}
		\frac{M_n -\alpha n^{1/r}}{n^{2-1/r}}
	\end{equation*}
	for $r>\frac 23$ and convergence in law of
	\begin{equation*}
		\frac{M_n-\alpha n^{1/r}}{n^{1/r-1}}
	\end{equation*}
	for $r \leq \frac 23$, see Theorem \ref{thm:second_term}. We also provide a description of upper and lower space-time envelopes of $M_n$ in the latter case. 
	It is well known that the stretched exponential distribution follows the principle of one big jump which we apply to our analysis of $M_n$. The~biggest displacement up to generation $n$ has a leading term $\alpha n^{1/r}$ followed by fluctuations of the 
	order $n^{1/r-1}$. For $r < \frac 23$ the asymptotics of $M_n$ is determined  by the aforementioned one big displacement while the contribution of other particles is  negligible. The case $r >\frac 23$ is slightly different, since one big jump is supplemented 
	by a ``moderate deviations'' contribution of the order $n^{2-1/r}$ coming from other particles. In the boundary case $r =\frac 23$ one sees fluctuations of order $n^{1/r-1} = n^{2-1/r}$ coming from both the behaviour of the biggest jump and 
	other particles. While there has been a lot of recent interest in the case of step distributions with regularly varying tails, see \cite{M16}, \cite{BHSP17}, \cite{BHSP18} and \cite{BMPR19}, it seems that stretched exponential tails have been 
	considered only in \cite{G00, DGH20}.	\\
	
The paper is organized as follows. In Section~\ref{prel} we present the necessary preliminaries concerning the step size distribution and the branching mechanism followed by a detailed description of our model. The main results are presented 
in Section~\ref{sec:main} which also contains some heuristics. The proofs of the main results are in Section~\ref{sec:proofsST}. In the appendix, we give the proof of Lemma \ref{lem:12} and Lemma \ref{sumstat}, two 
results on iid stretched exponential random variables which we did not find in the literature.

\section{Preliminaries}\label{prel}
	Throughout the article we write 
	$f(x) \ll g(x)$ if $f(x) = o(g(x))$ and $f(x) \sim g(x)$ if $\lim_{x \to \infty} f(x) /g(x) =1$.
	We write ``$\const$'' to denote positive constants whose values are of no significance to us. The actual value of ``$\const$'' may change from line to line.
For better readability, we often omit integer parts when no confusion arises.
	As mentioned in the first paragraph of the introduction, we suppose that the branching mechanism and the displacements are independent. Therefore we can introduce them separately.  

\subsection{Step size distribution}

	Let $X, X_1, X_2, \ldots$ be a collection of iid random variables of zero mean and unit variance and let $S = (S_n)_{n \geq 0}$ be the corresponding random walk, that is $S_0=0$, 
	$S_n = \sum_{k=1}^n X_k$. 
	Throughout the analysis of the branching random walk the behaviour of the probabilities  
	\begin{equation*}
		\P[S_n > x_n], \quad x_n \to \infty
	\end{equation*}
	as $n \to \infty$ plays a crucial role. In the case when Cram\'er's condition holds, that is
	\begin{equation}\label{eq:cramer}
		\E \left[ e^{s|X|} \right]<\infty \quad \mbox{for some }s>0 
	\end{equation}
	it is well known that
	\begin{equation*}
		\log\P[S_n>x_n]  \sim \left\{ \begin{array}{cc} -I(\rho)n , &  x_n = \rho n, \: \rho > 0 \\ 		-\frac{x_n^2}{2n}, &   x_n \ll n^{2/3}. \end{array} \right. 
	\end{equation*}
	where $I(\rho) = \sup_{s\in \R} \left(s\rho - \log\E\left[e^{sX}\right]\right)$, see~\cite{DZ10} for the case $ x_n = \rho n$ if \eqref{eq:cramer} holds and~\cite{modf18} for a complete description with a full  range of possible orders of $x_n$. 
	If on the other hand, $\E \left[e^{s|X|}\right]=\infty$ for any $s>0$ it is known that the probabilities $\P\left[S_n>xn\right]$ decay slower than exponentially in $n$  with the exact rate
	 being determined by the tail $\P[X>x]$ as $x \to \infty$. We will focus on the case of stretched exponential distributions.

	\begin{as} \label{assumption_RW}
		The random variable $X$ is centred $(\E [ X ] =0)$, has variance $1$ $(\E\left[X^2 \right]=1)$ and has a stretched exponential upper tail, that is there exist $\lambda >0$, $r \in (0,1)$ and a~function $a(x)$ with $a(x) \to a$ for $x \to \infty$ 
		such that
		\begin{equation*}
                  \P[X > x]=a(x) e^{-\lambda x^r}
		\end{equation*}
		for all $x\geq 0$.
Furthermore we assume that the lower tail of $X$  satisfies:
\begin{equation}\label{momentass}
\begin{array}{cc}
	\mbox{if $r> 2/3$,} & \lim_{x \to \infty}x^{-\frac{3r-2}{2r-1}} \log\P\left[X<-x\right] = -\infty \\
	\mbox{if $r\leq 2/3$,} & \E\left[|X^k|\right]<\infty,  \forall   k \in \N.\\
\end{array}
\end{equation}
	\end{as}

Note that $0 < (3r-2) /(2r-1)< r $ if $r > \frac 23$. Deviations for a random walk in the case when Cram\'er's condition is not fulfilled go back to~\cite{N69}. The statements we will need are collected in the following lemma.

\begin{lemma} \label{lemma_RW}
          Let Assumption~\ref{assumption_RW} be in force. Then for any constant $c>0$,
          \begin{equation*}
          	\begin{array}{cc}
          		\mbox{if $r> 2/3$,} &  \log \P\left[S_n > c n^{2 -\frac 1r}\right] \sim -\frac{c^2}{2}n^{3-\frac 2r}\\
          		\mbox{if $r\leq 2/3$,} &  \log \P\left[|S_n| > c \sqrt{n\log n}\right] \sim -\frac{c^2}{2}\log n 
          	\end{array}
          \end{equation*}
		If $x_n \gg n^{s}$, where $ s = \frac{1}{2-2r}$, then
		\begin{equation*}
			\P\left[ S_n > x_n \right] \sim n \P\left[ X>x_n \right].
		\end{equation*}
	\end{lemma}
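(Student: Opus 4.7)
My plan is to handle the three assertions separately, viewing them as three different deviation regimes for the walk $S_n$ under Assumption~\ref{assumption_RW}. For parts~(1) and (2), the claim is the Gaussian moderate deviation statement $\log \P[S_n > x_n] \sim -x_n^2/(2n)$. I would first check that both prescribed scales lie in the Gaussian window $\sqrt{n} \ll x_n \ll n^{1/(2-2r)}$: the lower inequality is immediate ($2-1/r > 1/2$ when $r > 2/3$, and $\sqrt{\log n} \to \infty$), while the upper inequality reduces to the algebraic claim $(2-1/r)(2-2r) < 1$, i.e.\ $4r^2 - 5r + 2 > 0$, which holds for all real $r$ since the discriminant is negative. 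In this window the Nagaev moderate deviation theorem for walks with stretched-exponential upper tails (the line of work going back to~\cite{N69}) yields the Gaussian asymptotic; the hypothesis~\eqref{momentass} on the lower tail of $X$ is precisely what is needed for that theorem to apply at these scales. Substituting $x_n = c n^{2-1/r}$ or $x_n = c\sqrt{n \log n}$ then gives the two stated formulas.

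For part~(3), the statement is the classical principle of one big jump for subexponential sums, which I would deduce from the precise large-deviation extension of~\cite{N69}. The role of the threshold is transparent in a direct argument: pick an auxiliary scale $y_n$ with $\sqrt n \ll y_n \ll x_n^{1-r}$, which exists \emph{precisely because} $x_n \gg n^{1/(2-2r)}$. The lower bound then follows from inclusion--exclusion,
\[
\P[S_n > x_n] \geq \sum_{i=1}^n \P[X_i > x_n,\, S_n - X_i > -y_n] - \binom{n}{2} \P[X > x_n]^2 = (1+o(1))\, n\, \P[X > x_n],
\]
using $\P[S_{n-1} > -y_n] \to 1$ (by the CLT, since $y_n \gg \sqrt n$) together with $n \P[X > x_n] \to 0$. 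The upper bound splits on whether $\max_i X_i$ exceeds $x_n - y_n$: the ``large maximum'' event is bounded by $n \P[X > x_n - y_n] \sim n \P[X > x_n]$, since $y_n = o(x_n^{1-r})$ makes $\lambda((x_n - y_n)^r - x_n^r) = o(1)$; the complementary event is handled by an exponential Chebyshev bound for the sum truncated at $x_n - y_n$.

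The main obstacle is the sharp truncated bound in the upper bound of~(3), namely $\P[\tilde S_n > x_n] = o(n \P[X > x_n])$ for $\tilde S_n = \sum_i (X_i \wedge (x_n - y_n))$. This requires carefully optimising the exponential tilting parameter (of order $x_n^{r-1}$) and invoking the lower-tail assumption~\eqref{momentass} to keep the left-tail contribution under control. The exponent $1/(2-2r)$ emerges precisely from balancing the tilted tail contribution against the Gaussian contribution. Since this computation is essentially Nagaev's original one and is standard in the literature on precise large deviations for subexponential sums, I would cite the existing references rather than reproduce it in detail.
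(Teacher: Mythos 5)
Your overall strategy coincides with the paper's, which disposes of the first two asymptotics by citing the moderate deviation literature (\cite{EichLoe}) and of the third by citing the big-jump domain result, Theorem 8.2 of \cite{DDS08}; your sketch of the one-big-jump argument for part three and your remark that \eqref{momentass} is exactly the lower-tail condition needed at the scale $n^{2-1/r}$ are both in order (modulo the cosmetic point that in the lower bound the events should be $\{X_i > x_n + y_n,\, S_n - X_i > -y_n\}$ so that they are contained in $\{S_n > x_n\}$; this changes nothing since $y_n = o(x_n^{1-r})$).

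There is, however, one genuinely incorrect step in your treatment of the first two claims: the Gaussian (moderate deviation) zone for stretched exponential tails is \emph{not} $\sqrt{n} \ll x_n \ll n^{1/(2-2r)}$. The boundary of the Gaussian regime is where the two exponents $x_n^2/(2n)$ and $\lambda x_n^r$ balance, i.e.\ at $x_n \asymp n^{1/(2-r)}$, which is strictly smaller than $n^{1/(2-2r)}$. For $n^{1/(2-r)} \ll x_n \ll n^{1/(2-2r)}$ one has the one-jump lower bound
\begin{equation*}
\P[S_n > x_n] \;\geq\; \P\bigl[X_1 > x_n + \sqrt{n}\bigr]\,\P\bigl[S_{n-1} \geq -\sqrt{n}\bigr] \;=\; e^{-\lambda x_n^r(1+o(1))},
\end{equation*}
and since $\lambda x_n^r \ll x_n^2/(2n)$ in that range, the asymptotics $\log \P[S_n>x_n] \sim -x_n^2/(2n)$ fails there (the log-asymptotics is $-\lambda x_n^r$ instead, and the exact asymptotics carries correction factors). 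Consequently your verification $(2-1/r)(2-2r)<1$ checks membership in the wrong window and does not by itself justify invoking the Gaussian asymptotics. The argument is easily repaired: the relevant inequality is $(2-1/r)(2-r)<1$, which is equivalent to $(r-1)^2>0$ and hence holds for all $r\in(2/3,1)$, and likewise $\sqrt{n\log n} \ll n^{1/(2-r)}$ trivially; so both prescribed scales do lie inside the true Gaussian zone and the citation of the moderate deviation results (with \eqref{momentass} supplying the lower-tail hypothesis) goes through. With the window corrected, your proof is essentially the paper's.
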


The proof of the first part of this lemma can be found in \cite{EichLoe} and the second follows from Theorem 8.2 in ~\cite{DDS08} .
	
\subsection{Branching mechanism}
	Let $Z = (Z_n)_{n \geq 0}$ be a Galton-Watson process with $Z_0 =1$ and the reproduction law $ (p_k)_{k\geq 0}$.   The key parameter describing the asymptotic behaviour of 
	$Z$ is the mean of the reproduction law denoted by
	\begin{equation*}
		m: = \sum_{k = 0}^\infty k p_k.
	\end{equation*}
	It is well-known that, provided $p_1<1$, the branching process survives with positive probability if and only if $m >1$. In this case one can introduce the probability 
	\begin{equation*}
		\P^*[\: \cdot \: ] = \P[\: \cdot \:  |  \:  \forall \: n \in\N, \: Z_n>0  ].
	\end{equation*}
	The asymptotic growth rate of $Z_n$ will be of crucial importance.
	It can be described by considering the sequence $W_n = m^{-n}Z_n$ which is a non-negative martingale with respect to $\mathcal{F}_n = \sigma( Z_k \: : \: k \leq n)$ and thus has an almost sure limit 
	\begin{equation}\label{eq:2:W}
		W = \lim_{n \to \infty} m^{-n}Z_n.
	\end{equation} 
	The Kesten-Stigum Theorem provides a necessary and sufficient criterion for $W$  to be non-degenerate. 
	\begin{lemma}
		Assume that $m>1$. Then
		\begin{equation*}
			\P^*[W>0] = 1 \quad \Leftrightarrow \quad \E \left[ Z_1 \log^+Z_1 \right] <\infty.
		\end{equation*}
	\end{lemma}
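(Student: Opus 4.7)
The strategy is to reduce the equivalence to the statement $\E[W] = 1 \Leftrightarrow \E[Z_1 \log^+ Z_1] < \infty$. This suffices: since $W_n$ is a non-negative martingale with $\E[W_n] = 1$, the conclusion $\E[W] = 0$ forces $W \equiv 0$ $\P$-a.s., while if $\E[W] = 1$, decomposing over the first generation gives $W = m^{-1} \sum_{i=1}^{Z_1} W^{(i)}$ with $(W^{(i)})$ iid copies of $W$, so that $q := \P[W = 0]$ satisfies $q = f(q)$ where $f$ is the offspring generating function; combined with $q < 1$ this forces $q$ to equal the extinction probability, which is exactly the statement $\P^*[W > 0] = 1$.

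To prove the equivalence I would use the size-biased Galton--Watson tree construction of Lyons, Pemantle and Peres. Let $\hat{p}_k := kp_k/m$ and define a law $\hat{\P}$ on rooted trees with a distinguished infinite ray (the \emph{spine}) as follows: every spine particle has $\hat{\xi} \sim \hat{p}$ offspring, one of which is chosen uniformly at random to continue the spine, while every off-spine particle reproduces independently according to $p$. A direct computation shows that on $\mathcal{F}_n$ the law $\hat{\P}$ is absolutely continuous with respect to $\P$ with density $W_n$. Consequently the standard dichotomy for Radon--Nikodym derivatives of non-negative martingales yields $\E[W] = \hat{\P}[W < \infty] \in \{0, 1\}$, reducing the problem to deciding when $W < \infty$ $\hat{\P}$-almost surely. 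Writing $\hat{\xi}_n$ for the offspring count of the spine particle at generation $n-1$ (iid with law $\hat{p}$ under $\hat{\P}$), one has $W_n \geq \hat{\xi}_n / m^n$, so if $\E[\log^+ \hat{\xi}_1] = \infty$ the Borel--Cantelli lemma gives $\limsup \hat{\xi}_n/m^n = \infty$ and hence $W = \infty$ $\hat{\P}$-a.s. Conversely, if $\E[\log^+ \hat{\xi}_1] < \infty$ one decomposes $W_n$ into the sum of rescaled subtree contributions branching off the spine and proves convergence via a truncation at level $m^n$ together with a conditional second-moment estimate under $\hat{\P}$. The identity $\E[\log^+ \hat{\xi}_1] = m^{-1} \E[Z_1 \log^+ Z_1]$ then closes the loop.

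The main obstacle is the sufficiency direction on the size-biased tree: once the Radon--Nikodym identification and the branching-property argument are in place, the only genuinely quantitative step is showing $W < \infty$ $\hat{\P}$-a.s.\ under $\E[Z_1 \log^+ Z_1] < \infty$, and this requires a careful truncation since the subtree contributions under $\hat{\P}$ are unbounded and controlled only in conditional mean. All other pieces are either structural identities or direct applications of Borel--Cantelli to iid sequences.
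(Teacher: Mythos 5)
The paper does not prove this lemma at all: it is the Kesten--Stigum theorem, and the authors simply cite \cite[Chapter 2]{S15}, where the proof given is precisely the Lyons--Pemantle--Peres size-biased tree argument you sketch. So your proposal follows essentially the same route as the paper's source, and its outline is sound: the reduction via $q=f(q)$ and $q<1$ to the statement $\E[W]=1\Leftrightarrow \E[Z_1\log^+Z_1]<\infty$, the identification of $W_n$ as the density of $\hat\P$ on $\mathcal F_n$, the identity $\E[\log^+\hat\xi_1]=m^{-1}\E[Z_1\log^+Z_1]$, and the Borel--Cantelli divergence argument via $W_n\geq \hat\xi_n/m^n$ are all correct. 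Two small caveats. First, the assertion that ``the standard dichotomy yields $\E[W]=\hat\P[W<\infty]\in\{0,1\}$'' is slightly misstated: the measure-theoretic fact is only $\E[W]=\hat\P[\limsup_n W_n<\infty]$, and the zero-one nature of this quantity is not abstract but is delivered by the two-case analysis you then perform (or by a separate inherited zero-one law), so the logic should be ordered accordingly. Second, in the sufficiency direction the cleanest completion is not a truncated second-moment bound (second moments of the off-spine contributions need not exist under the bare $L\log L$ hypothesis, and making a truncation argument airtight is delicate); the standard route is to condition on the spine data, note that $\hat\E[\,\cdot\mid\text{spine}]$ applied to $W_n$ is of order $\sum_k(\hat\xi_k-1)m^{-k}$ plus a bounded term, which is a.s.\ finite when $\E[\log^+\hat\xi_1]<\infty$ by Borel--Cantelli, and conclude $\liminf_n W_n<\infty$ $\hat\P$-a.s.\ by conditional Fatou, which suffices for $\E[W]=1$. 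With those adjustments your sketch matches the cited proof.
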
	

	The proof can be found in \cite[Chapter 2]{S15}. We will prove our main result in the case when $W>0$ $\P^*$-a.s.
        Our standing assumption on the branching process will be the following.
	
	\begin{as} \label{as:BP} 
          The Galton-Watson process $Z$ is supercritical, that is $m>1$, and we have $\E[ Z_1 \log^+ Z_1] <\infty$.
	\end{as}

\subsection{Branching random walk}
	
	The branching random walk is a discrete time stochastic process that can be described in the following way.
	At~time $n=0$ one particle is placed at the origin of the real line. 
	This particle will start a~population which will be described by the branching process $Z = (Z_n)_{n\geq 0}$. At time $n=1$ the initial particle splits into $Z_1$ new particles which 
	move independently of each other and of $Z_1$. We assume that
	all displacements of particles from their place of birth are independent copies of $X$. Each particle evolves according to this rules independently of all other particles. More precisely, at time $n=2$, 
	each particle, independently of the others, splits into a random number of particles distributed according to the reproduction law. The total number of particles present at the system at time $n=2$ 
	is denoted by $Z_2$. Each particle performs, independently of all other particles and of $Z_1, Z_2$, a step 
which has the same law as $X$. The system continues according to these rules.  
	Let $\mathcal{T} = (V, E)$ be the associated Galton-Watson tree with the initial particle denoted as the root $o \in V$ (see~\cite{S15} for more information and many results on this model). 
	Let $D_n \subset V$ denote the 
	set of particles present in the system at time $n$. Clearly $|D_n | = Z_n$. 
	For $v, w \in V$ write $[v,w]$ for the set of vertices along the unique path in the graph $\mathcal{T}$ from $v$ to $w$ (including $v$ and $w$). Write $|x|=n$ if $x \in D_n$ and $|x|\leq n$ if $x \in \bigcup_{k=0}^nD_k$. For $x,y \in \mathcal{T}$ denote by $x \wedge y$ the last common ancestor of $x$ and $y$. Finally we write $x \leq y$ if $x \in [o ,y]$, that is if $x$ is an ancestor of $y$.
	To model the displacements, assume that each vertex of the tree $\mathcal{T}$, except the root, is labelled 
	with an independent copy of $X$, that is we are given a collection 
	$\{ X_v\}_{v\in V\setminus \{o\}}$ of iid random variables distributed as $X$. The random variable $X_v$ describes the displacement that the particle $v$ took from its birthplace. We set $X_o =0$.
        Then the position of the particle $v$ is equal to
	\begin{equation*}
		S_v = \sum_{u \in [o,v]} X_u
	\end{equation*} 
	and the position of the rightmost particle at time $n$ is
	\begin{equation*}
		M_n = \max_{|v|=n}S_v.
	\end{equation*}
	It is well known, that if~\eqref{eq:cramer} is satisfied, then $M_n$ has a linear speed, that is $n^{-1}M_n$ converges to a constant a.s. (see \cite{B76, H74, K75}) and the second term is of logarithmic order. More precisely, denote $\varphi(s) = \log m + \log \E \left[e^{sX} \right]$ and
	suppose that there exists $s_0>0$ such that $s_0\varphi'(s_0) = \varphi(s_0)$. Then, under some mild technical assumptions, 
	\begin{equation*}
		\frac{M_n + \varphi'(s_0) n}{\log n} \overset{\P}\to \frac{3}{2s_0}. 
	\end{equation*}
	see~\cite{HS09, S15}. Moreover it is known that $M_n + \varphi'(s_0) n - 3\log (n)/(2s_0)  $ converges in distribution~\cite{A13, S15}.

	In our case, as proved in~\cite{G00}, under Assumptions~\ref{assumption_RW} and \ref{as:BP}, $M_n$ grows faster than linear in $n$.
	\begin{lemma}
		Let Assumptions~\ref{assumption_RW} and \ref{as:BP} be in force. Then 
		\begin{equation}\label{linspeed}
			\limn \frac{M_n}{n^{1/r}} = \alpha = \Bigl( \frac{\log m}{\lambda} \Bigr)^{1/r} \quad \P^*\text{-a.s.}
		\end{equation}
	\end{lemma}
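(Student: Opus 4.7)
The plan is to prove the two one-sided inequalities
\begin{equation*}
  \limsup_{n \to \infty} \frac{M_n}{n^{1/r}} \leq \alpha \quad \text{and} \quad \liminf_{n \to \infty} \frac{M_n}{n^{1/r}} \geq \alpha, \quad \P^*\text{-a.s.,}
\end{equation*}
separately, and combine them.

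For the \textbf{upper bound} I use a first-moment/Borel-Cantelli argument based on the many-to-one identity:
\begin{equation*}
  \P[M_n > c n^{1/r}] \leq \E\bigl[\#\{v \in D_n : S_v > c n^{1/r}\}\bigr] = m^n \P[S_n > c n^{1/r}].
\end{equation*}
Fix $c > \alpha$, so $\lambda c^r > \log m$. For $r < 2/3$ one has $c n^{1/r} \gg n^{1/(2-2r)}$, so the second part of Lemma~\ref{lemma_RW} yields $\P[S_n > c n^{1/r}] \sim a n e^{-\lambda c^r n}$, the bound above is exponentially summable in $n$, and Borel-Cantelli together with $c\downarrow \alpha$ along a countable sequence gives the claim. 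For $r \geq 2/3$ the scale $cn^{1/r}$ lies below the big-jump regime, but the following truncation still works: write $\{S_n > c n^{1/r}\}$ as the union of $\bigl\{\max_{k\leq n} X_k > \delta n^{1/r}\bigr\}$ and $\{\tilde S_n > (c-\delta) n^{1/r}\}$, where $\tilde S_n$ is the sum of the $X_k$'s truncated at $\delta n^{1/r}$; a union bound handles the first event, while the second is dealt with by a Chernoff estimate for $\tilde S_n$ with an optimised parameter, both producing exponentially small bounds when $\delta$ is small enough.

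For the \textbf{lower bound}, fix $c<\alpha$, so $\lambda c^r < \log m$, and set $N_n := \#\{v \in D_n : S_v > c n^{1/r}\}$. Many-to-one combined with Lemma~\ref{lemma_RW} gives $\E[N_n]\sim a n e^{n(\log m - \lambda c^r)}$, which diverges exponentially. To upgrade this to almost-sure positivity I would apply a second-moment / Paley-Zygmund argument: compute $\E[N_n^2]$ by splitting the double sum over pairs $(v,w)\in D_n^2$ according to $k=|v\wedge w|$ and using Lemma~\ref{lemma_RW} to estimate $\P[S_{n-k}>y]$ at the relevant scales, obtaining $\P[N_n > 0] \geq \eta$ for some $\eta>0$ independent of $n$. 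Kesten-Stigum (Assumption~\ref{as:BP}) and the branching structure then promote this to $\P^*$-a.s.\ positivity: inside each subtree rooted at a particle of a fixed but deep enough generation, the event occurs independently with probability $\geq \eta$, and on survival infinitely many such subtrees exist. Taking $c\uparrow\alpha$ along a countable sequence yields $\liminf M_n/n^{1/r}\geq \alpha$ a.s. A more intuitive alternative keeps the same skeleton but exploits the "principle of one big jump" directly: one edge among the $\sum_{k\leq n} Z_k \asymp m^n W$ edges in the tree carries a displacement $\geq (\alpha-\epsilon) n^{1/r}$, and this edge occurs with high probability at a generation $k^*=n-O(1)$, so its descendants at generation $n$ inherit a position within $o(n^{1/r})$ of the big jump.

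The \textbf{main obstacle} is the lower bound. On the analytic side, one needs uniform control of $\P[S_{n-k} > y]$ across the moderate-deviations, intermediate and big-jump regimes, which is particularly delicate for $r > 2/3$, where $n^{1/r}\ll n^{1/(2-2r)}$ and the one-big-jump asymptotic does not apply directly. On the probabilistic side, correlations between paths contribute non-trivially to $\E[N_n^2]$ and must be unpacked generation by generation. The upper bound, by contrast, reduces to a single first-moment estimate together with a standard truncation supplement when $r \geq 2/3$.
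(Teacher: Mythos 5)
First, note that the paper does not prove this lemma itself: \eqref{linspeed} is quoted from \cite{G00}, and the argument there (and the related computations reproduced in this paper in Lemma \ref{lem:12}, Lemma \ref{Mnaest} and Proposition \ref{lbmax}) runs through a first-moment bound with truncation for the upper half and a one-big-jump construction for the lower half. Your upper bound is in that spirit, but the parameters as written do not work for $r\ge 2/3$. After many-to-one you must beat the factor $m^n$, and your union-bound piece gives $m^n\,n\,\P[X>\delta n^{1/r}]\asymp n\exp\bigl((\log m-\lambda\delta^r)n\bigr)$, which is summable only if $\delta>\alpha$; so ``$\delta$ small enough'' points in the wrong direction. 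Once $\delta>\alpha$ is forced, your weakened threshold $(c-\delta)n^{1/r}$ for the truncated sum only yields $\limsup_n M_n/n^{1/r}\le\delta+\alpha^r\delta^{1-r}\approx 2\alpha$, not $\alpha$. The repair is easy: on the complement of $\{\max_{k\le n}X_k>\delta n^{1/r}\}$ one has $\tilde S_n=S_n$, so keep the threshold $cn^{1/r}$, choose $\delta\in\bigl(\alpha,(c\alpha^{-r})^{1/(1-r)}\bigr)$ and a Chernoff parameter of order $n^{1-1/r}$; this is exactly the computation behind Lemma \ref{lem:12} and Lemma \ref{Mnaest}.

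The genuine gap is in the lower bound. For $c<\alpha$ and $N_n=\#\{v\in D_n:S_v>cn^{1/r}\}$, the second moment is dominated by pairs of particles whose common ancestral segment carries the single big jump: ordered pairs branching at a bounded generation already contribute of order $m^{2n}e^{-\lambda c^rn}$, whereas $(\E[N_n])^2\asymp m^{2n}n^2e^{-2\lambda c^rn}$, so $(\E[N_n])^2/\E[N_n^2]\lesssim n^2e^{-\lambda c^rn}\to0$ exponentially fast. Hence Paley--Zygmund does \emph{not} give $\P[N_n>0]\ge\eta>0$ uniformly in $n$ (and the computation also implicitly needs $\E[Z_1^2]<\infty$, which Assumption \ref{as:BP} does not provide); boosting by subtrees rooted at a \emph{fixed} generation cannot compensate an exponentially small $\eta_n$. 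Your ``more intuitive alternative'' is in fact the correct route --- it is the mechanism behind \cite{G00} and behind Propositions \ref{prop:Nn} and \ref{lbmax} here --- but as sketched it is incomplete for an almost-sure statement: you must (i) guarantee that the particle carrying a jump of size at least $(\alpha-\varepsilon)n^{1/r}$ sits on an ancestral line whose remaining increments sum to at least $-\varepsilon n^{1/r}$ (e.g.\ by splitting at generation $\varepsilon n$ and using that an independent walk piece is nonnegative with probability close to $1/2$, as in the proof of Proposition \ref{lbmax}), and (ii) make the failure probabilities summable in $n$, or invoke a conditional Borel--Cantelli argument, since the event in question changes with $n$; neither step is supplied by the one-sentence sketch.
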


\section{Main results}\label{sec:main}

	We can now present our main results. Denote
	\begin{equation}\label{sigmarhodef}
		\sigma = \frac{\alpha^{1-r}}{\lambda r}, \qquad \rho = \sum_{k=0}^\infty m^{-k}\P\left[Z_k>0 \right].
	\end{equation}

	\begin{theorem}\label{thm:second_term}
		Suppose that Assumptions \ref{assumption_RW}  and \ref{as:BP} are satisfied.
		If $r \in (\frac{2}{3},1)$ then
		\begin{equation}\label{mainlarger}
                  \limn \frac{M_n-\alpha n^{1/r}}{n^{2-1/r}}  =
                  \frac{r \log m}{2 \alpha}  \quad \P^*\text{-}a.s.
		\end{equation}
		If $ r \in (0, \frac 23)$ then
			\begin{equation}\label{convdistrsmallr}
			\frac{M_n-\alpha n^{1/r}}{\sigma n^{1/r-1}} \overset{d}\to V,
                \end{equation}
where $V$ is a random variable with c.d.f.
                \begin{equation}\label{mainsmallr}
                \P^*[V\leq x] = H(x)= \E^* \left[\exp \left\{- a \rho We^{-x} \right\} \right].
		\end{equation}
		If $r =\frac 23$ then
		\begin{equation*}
			\frac{M_n - \alpha n^{3/2}}{\sigma\sqrt{n}} \overset{d}\to  V_{2/3},
                \end{equation*}
where $ V_{2/3}$ is a random variable with c.d.f.
                \begin{equation}\label{mainsmallrcrit}
\P^*[V_{2/3}\leq x] = H_{2/3}(x) :=  \E^* \left[ \exp \left\{-a \rho W e^{-x + \sigma^{-2}/2}  \right\}\right].
		\end{equation}
	\end{theorem}

 	Theorem \ref{thm:second_term} states for $r \leq \frac 23$ a convergence in distribution. It is natural to ask about the almost sure behaviour of $M_n$ in this case.
        \begin{theorem}\label{thm:second_LIL}
		Suppose that Assumptions \ref{assumption_RW}  and \ref{as:BP} are satisfied. If $ r \in (0, \frac 23)$ we have $\P^*$ - a.s.
		\begin{equation*}
                  \liminf_{n \to \infty}\frac{M_n - \alpha n^{1/r} + \sigma n^{1/r-1}\log\log n}{n^{1/r-1}} =
                  \sigma \log\left(a \rho W\right),
		\end{equation*}
		and for any positive, non-decreasing function $\psi \colon [1, \infty) \to \R$ such that $\psi(n) = o(n)$,
		\begin{equation*}
                  \limsup_{n \to \infty}\frac{M_n - \alpha n^{1/r} - \sigma n^{1/r-1}\psi(n)}{n^{1/r-1}} =
                  \begin{cases} -\infty & \text{ if } \int\limits_1^\infty e^{-\psi(x)} \: dx <\infty\\
                    +\infty & \text{ if } \int\limits_1^\infty e^{-\psi(x)} \: dx  = \infty\, .\\
                    \end{cases}
                  \end{equation*}
                  
                  If $r =\frac 23$ then $\P^*$- a.s.  
		\begin{equation}\label{limsupandinf} 
			\limsup_{n \to \infty} \frac{M_n-\alpha n^{3/2}}{ \sqrt{n} \log n} = \sigma
	\quad \mbox{and}\quad	-\infty < \liminf_{n \to \infty } \frac{M_n -\alpha n^{3/2}}{ \sqrt{n\log n }} 
< \infty.
		\end{equation}
              \end{theorem}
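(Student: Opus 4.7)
The common engine for both parts of Theorem~\ref{thm:second_LIL} is a quantitative refinement of the distributional convergence in Theorem~\ref{thm:second_term}. By Kesten--Stigum, $N_n := \sum_{k=1}^n Z_k \sim m^{n+1}W/(m-1)$ almost surely on $\{W>0\}$, and conditionally on the genealogy the $\{X_v\}_{|v|\leq n}$ are iid copies of $X$. Using Assumption~\ref{assumption_RW} together with the one-big-jump heuristic indicated in Section~\ref{sec:main}, I would first establish
\begin{equation*}
	\P^*\!\Bigl[M_n \leq \alpha n^{1/r}+\sigma n^{1/r-1}y \,\Bigm|\, W\Bigr] = \exp\!\Bigl(-\tfrac{amW}{m-1}e^{-y}\Bigr)(1+o(1))
\end{equation*}
for $r<2/3$, uniformly for $y$ in windows that grow logarithmically in $n$, and the analogue with $H_{2/3}$ in place of the pure Gumbel for $r=2/3$. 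The key input is that for $r<2/3$ the residual sum along the spine carrying the big jump is $O(\sqrt{n}) = o(n^{1/r-1})$, so the Gumbel extreme-value asymptotics for $X^{(n)}:=\max_{|v|\leq n}X_v$ (the maximum of $N_n$ iid stretched exponentials) transfer directly to $M_n$.

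Set $c_W := amW/(m-1)$. The one-sided halves coming from summability are then essentially immediate. For the $\liminf$ at $r<2/3$, substituting $y=\log c_W-\log\log n+z$ into the tail estimate yields a conditional probability of order $n^{-e^{-z}}$, summable for $z<0$; the first Borel--Cantelli lemma (conditional on $W$) gives $\liminf\geq \sigma\log c_W$ $\P^*$-a.s. The same scheme gives the ``$\int^\infty e^{-\psi(x)}\,dx<\infty$ implies $\limsup=-\infty$'' direction of the integral test, via $\P^*[M_n > \alpha n^{1/r}+\sigma n^{1/r-1}(\psi(n)+B)\mid W] \approx c_W e^{-\psi(n)-B}$ and sending $B\to\infty$. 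In the boundary case $r=2/3$, the same first-moment argument with $\psi(n)=(1+\varepsilon)\log n$ delivers $\limsup\leq\sigma$ (using $H_{2/3}(y)\sim\exp(-c_W e^{-y})$ as $y\to\infty$). The two-sided finiteness of $\liminf$ at $r=2/3$ is also covered here: $\liminf>-\infty$ follows from the super-polynomially fast decay of the conditional tail as $y\to-\infty$ on the $\sqrt{n\log n}$ scale, and $\liminf<+\infty$ is a Fatou-lemma consequence of the fact that $\P^*[M_n\leq \alpha n^{3/2}+\sqrt{n\log n}]\to 1$.

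The reverse halves --- showing the bounds are actually attained --- form the main obstacle, since they demand ``$A_n$ infinitely often'' without direct independence across $n$. My plan is a subtree-decomposition argument. Along a sparse subsequence $n_j=2^j$, condition on the tree and the positions up to an intermediate generation $k_j$ chosen with $k_j\to\infty$ and $k_j=o(n_j)$; the subtrees rooted at the vertices of generation $k_j$ are then conditionally iid, each a BRW of depth $n_j-k_j$ carrying its own independent Gumbel fluctuation. The event of interest factorises as a product (or its complement as a union) over these iid subtrees, and a conditional second Borel--Cantelli argument along $(n_j)$ produces the required i.o.~occurrence $\P^*$-almost surely. This scheme simultaneously handles the $\liminf\leq\sigma\log c_W$ direction and the ``$\int^\infty e^{-\psi(x)}\,dx=\infty$ implies $\limsup=+\infty$'' direction of the integral test for $r<2/3$, as well as $\limsup\geq\sigma$ at $r=2/3$ (with $\psi(n)=(1-\varepsilon)\log n$). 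The most delicate bookkeeping --- and the principal technical challenge --- is ensuring that the Gumbel constant surviving the subtree analysis is exactly $c_W$ rather than some distortion, and, in the boundary case, that the Gaussian spine fluctuation does not interfere with the Gumbel scaling at the relevant range of $y$.
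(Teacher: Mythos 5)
Your first-moment halves are sound and essentially coincide with the paper's route: the paper proves the almost-sure envelopes for the biggest displacement $N_n$ (Proposition~\ref{prop:Nn_LIL}), where the conditional law given $Z$ is exact (no uniform Gumbel approximation for $M_n$ is needed), and transfers them to $M_n$ via the a.s.\ comparison $M_n-N_n=o(n^{1/r-1})$ of Lemma~\ref{comparelemma} for $r<2/3$, respectively via $(M_n^{\mathcal{D}}-N_n)/(\sqrt{n}\log n)\to 0$ and the two-sided bounds on $(M_n^{\mathcal{D}}-N_n)/\sqrt{n\log n}$ (arguments as in Lemma~\ref{ref:lem23}) for $r=2/3$. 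Your sketch of the boundary-case $\liminf$ claims (reverse Fatou for $\liminf<\infty$, summable lower-tail bounds for $\liminf>-\infty$) is also compatible with this.

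The genuine gap is in the attainment (``infinitely often'') directions: a conditional second Borel--Cantelli argument along the geometric subsequence $n_j=2^j$ cannot work, because the relevant conditional probabilities are summable along any geometric sequence. Concretely, with $c_W=\frac{am}{m-1}W$, the dip event for the $\liminf\le\sigma\log c_W$ direction has conditional probability of order $n^{-1/\gamma}$ with $\gamma>1$ close to $1$, and $\sum_j 2^{-j/\gamma}<\infty$; for the divergent integral test the exceedance probability at time $n$ is of order $e^{-\psi(n)}$, and $\sum_j e^{-\psi(2^j)}$ may converge although $\sum_n e^{-\psi(n)}=\infty$ (take $\psi(x)=\log x$); for $\limsup\ge\sigma$ at $r=2/3$ the probability is of order $n^{-(1-\varepsilon)}$, again summable along $2^j$. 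So your scheme yields no i.o.\ occurrence --- the first Borel--Cantelli lemma even shows the events fail eventually along that subsequence, and no choice of the intermediate generation $k_j$ repairs this. The paper's argument differs precisely here: for the $\liminf$ it conditions on $\mathcal{I}_{k-1}$ along the \emph{polynomial} subsequence $n_k=k^{1+\varepsilon}$ with $\varepsilon<\gamma-1$, so that $\sum_k n_k^{-1/\gamma}=\sum_k k^{-(1+\varepsilon)/\gamma}=\infty$, after first showing that $\{N_{n_{k-1}}>f_{n_k}\}$ occurs only finitely often; for the divergent integral test and the $r=2/3$ upper envelope it works along \emph{all} $n$, using only the fresh displacements $\max_{|v|=n}X_v$ of generation $n$, which are conditionally independent of the past given $Z$ and give $\P^*[\,\cdot\mid\mathcal{H}_{n-1}]\geq \const\cdot(W+o(1))e^{-\psi(n)}$, whose sum diverges by hypothesis. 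In particular your subtree decomposition is unnecessary: conditional independence of the displacement labels given $Z$ already provides the required product structure, and the real issue is the density of the time points, not the source of independence.
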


              After presenting the main results, we describe the strategy of the proofs.
              First, we explain the arguments concerning almost sure convergence and convergence in law in Theorem~\ref{thm:second_term}.
              Then, we give the arguments leading to a description of the upper and lower space-time envelopes in Theorem \ref{thm:second_LIL}.

\subsection{Almost sure and weak convergence}
             
	In order to understand the limiting distributions in the case $r \leq \frac 23$ and to illustrate what leads to this behaviour of $M_n$, we first introduce a simpler process which we use in the proof of Theorem~\ref{thm:second_term}. 
	Consider the biggest displacement of particles which have (at least) one descendant at generation $n$, i.e.
	\begin{equation*}
		N_n = \max \left\{ X_v \: : \: v \in \mathcal{N}_n \right\}, \quad \mathcal{N}_n =\left\{ v \in \bigcup _{k=1}^n D_k \: : \: \exists x \in D_n , \: v \leq x \right\}.
	\end{equation*}
	Due to Assumption~\ref{assumption_RW}, the law of the displacements lies in the maximum domain of attraction of the Gumbel law.
	Since $N_n$ is just a maximum of 
	\begin{equation*}
		Y_n=   |\mathcal{N}_n|
	\end{equation*}	 
	independent random variables it is relatively easy to obtain its asymptotic behaviour. In what follows we describe the behaviour of the extremes of $\left\{ X_v \: : \: v \in \mathcal{N}_n \right\}$ using the convergence of point processes, 
	that is measurable functions taking values in the space of point measures equipped with the vague topology~\cite{R87}.
	The convergence mentioned in Proposition~\ref{prop:Nn} below is the convergence in distribution with respect to vague convergence of measures on $\R$. Equivalently by \cite[Proposition 3.19]{R87} the point process $\Lambda_n$ converges in distribution to a point process $\Lambda$ if and only if 
	for any continuous, non-negative $f\colon \R \to \R $ with compact support
	\begin{equation*}
		\int f(s) \: \Lambda_n(ds)  \overset{d}\to \int f(s) \: \Lambda(ds).
	\end{equation*}
	In the sequel we will use a special class of random measures. A point process $\Lambda$ is a Poisson point process with intensity measure $\mu$ if and only if for any $f \colon \R \to \R$ continuous, non-negative with compact support,
	\begin{equation}\label{laplace}
		\E \left[ \exp \left\{ - \int f(s) \Lambda(ds) \right\} \right] = \exp \left\{ -\int_\R \left( 1-e^{-f(s)} \right) \mu(ds) \right\},
	\end{equation}
	see~\cite[Proposition 3.6]{R87}.
	We will denote by $\epsilon_x$, for $x \in \R$ the probability measure concentrated at $x$. That is $\epsilon_x(A) =1$ if $x \in A$ and $\epsilon_x(A) =0$ otherwise. We refer to~\cite{K83, R87} for an introduction to the topic of random measures.  \\ 
	
	\begin{proposition}\label{prop:Nn}
		Suppose that Assumptions \ref{assumption_RW}  and \ref{as:BP} are satisfied. Then  
		\begin{equation*}
			\frac{N_n-\alpha n^{1/r}}{\sigma n^{1/r-1}} \overset{d}\to V 
\end{equation*}
where $V$ has the c.d.f. given by \eqref{mainsmallr}.
		Moreover the point process on $\R$ given by
		\begin{equation*}
			\Lambda_n = \sum_{v \in \mathcal{N}_n} \epsilon_{\bar{X}_v}, \quad \bar{X}_v = \frac{X_v-\alpha n^{1/r}}{\sigma n^{1/r-1}}
		\end{equation*}
		converges in distribution to a random measure $\Lambda$, which conditioned on $W$ is a Poisson point process with intensity $\mu$ given by
		\begin{equation}\label{intensity}
			\mu (W, dx) = a \rho W e^{-x}dx.
                      \end{equation}
	\end{proposition}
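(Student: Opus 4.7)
The plan is to exploit the fact that, conditionally on the Galton--Watson tree, the collection $\{X_v\}_{0<|v|\leq n}$ consists of $Y_n=\sum_{k=1}^n Z_k$ iid copies of $X$, so the whole statement reduces, given $\mathcal{F}_\infty=\sigma(Z_k:k\geq 0)$, to classical extreme value theory for iid sequences. First I would record the consequence of Kesten--Stigum (which is applicable by Assumption~\ref{as:BP}) that
\begin{equation*}
\frac{Y_n}{m^n}=\sum_{k=1}^n m^{k-n}W_k \longrightarrow \frac{m}{m-1}W\quad \P^*\text{-a.s.},
\end{equation*}
so that the effective number of iid trials grows like a deterministic multiple of $m^n W$.

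Next I would compute the one-point tail asymptotics at the scale of interest. With $y_n(x)=\alpha n^{1/r}+\sigma n^{1/r-1}x$, a first-order Taylor expansion of $t\mapsto t^r$ around $\alpha n^{1/r}$ together with the identities $\lambda \alpha^r=\log m$ and $\lambda r\alpha^{r-1}\sigma=1$ (which is exactly how $\sigma$ was defined) gives $\lambda y_n(x)^r = n\log m + x + o(1)$. Combined with $a(y_n(x))\to a$, this yields $\P[X\geq y_n(x)]\sim a\, m^{-n}e^{-x}$, and hence
\begin{equation*}
Y_n\,\P[X\geq y_n(x)] \longrightarrow \frac{a m W}{m-1}e^{-x}\quad \P^*\text{-a.s.}
\end{equation*}

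For the first assertion, conditioning on $\mathcal{F}_\infty$ and using independence of the displacements from the tree,
\begin{equation*}
\P\bigl[N_n\leq y_n(x)\mid \mathcal{F}_\infty\bigr]=\bigl(1-\P[X>y_n(x)]\bigr)^{Y_n}\longrightarrow \exp\Bigl(-\tfrac{amW}{m-1}e^{-x}\Bigr)\quad \P^*\text{-a.s.,}
\end{equation*}
and bounded convergence gives the claim on the c.d.f.\ of $V$. For the point process statement I would use the standard criterion for Poisson convergence of $\sum_i \delta_{f(X_i)}$ with iid $X_i$: it suffices to verify that for every finite union $B$ of half-open intervals $(a,b]$, $Y_n\,\P[\bar X\in B]$ converges (conditionally on $\mathcal{F}_\infty$) to $\mu(W,B)$, which follows by applying the one-point asymptotic at the endpoints. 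Asymptotic independence across disjoint intervals is automatic for the iid setting, so conditionally on $\mathcal{F}_\infty$ the process $\Lambda_n$ converges in distribution to the claimed PPP; taking expectation of its Laplace functional yields the unconditional statement.

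The main technical obstacle is the tail expansion leading to $\P[X\geq y_n(x)]\sim a m^{-n}e^{-x}$: one has to control the remainder in the Taylor expansion of $t^r$ uniformly enough that it vanishes in the exponent even after multiplication by $\lambda$, and to verify that the slowly varying prefactor $a(y_n(x))$ can indeed be replaced by $a$ without spoiling the limit. Once this asymptotic is in hand, the rest is essentially bookkeeping from classical iid extreme value theory coupled with Kesten--Stigum.
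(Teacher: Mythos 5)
Your proposal is correct and follows essentially the same route as the paper: condition on the tree so that $\{X_v\}_{|v|\leq n}$ are $Y_n$ iid copies of $X$, use $m^n\P[X\geq \alpha n^{1/r}+\sigma n^{1/r-1}x]\to ae^{-x}$ together with $Y_n/m^n\to \frac{m}{m-1}W$ to get the conditional Gumbel-type limit $(1-\P[X>e_n])^{Y_n}\to\exp\bigl(-\tfrac{amW}{m-1}e^{-x}\bigr)$, and then invoke the standard binomial-to-Poisson criterion (the paper checks the Kallenberg conditions via Resnick, which is equivalent to your mean-measure verification on finite unions of intervals) before integrating out $Z$ by dominated convergence.
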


	We can already see, that the asymptotics of $M_n$ and $N_n$ coincide for $ r < \frac 23$.  In fact, we will prove the following.
        \begin{lemma}\label{comparelemma}
          Let the Assumptions \ref{assumption_RW}  and \ref{as:BP} be in force. For $ r < \frac 23$, 
	\begin{equation}\label{compare}
		\lim_{n \to \infty }\frac{M_n - N_n}{ n^{1/r-1}} =0,\,  \P^*-a.s.
              \end{equation}
              \end{lemma}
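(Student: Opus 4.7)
The plan is to prove the two one-sided bounds $M_n \geq N_n - o(n^{1/r-1})$ and $M_n \leq N_n + o(n^{1/r-1})$, both $\P^*$-almost surely. A preliminary step for both is to localise the argmax vertex $v^*$ (achieving $N_n$) in a generation close to $n$: given the tree, $v^*$ is uniformly distributed among the $Y_n = \sum_{k \leq n} Z_k$ vertices, so $\P[v^* \in D_k \mid Z_1, \dots, Z_n] = Z_k / Y_n$, which behaves like $m^{k-n}$. Hence $\P[n - |v^*| > L] = O(m^{-L})$; taking $L = c \log n$ for $c$ large and using Borel--Cantelli yields $n - |v^*| = O(\log n)$ $\P^*$-a.s., so in particular $(n - |v^*|)^{1/r} = o(n^{1/r-1})$.

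For the lower bound, pick a descendant $w$ of $v^*$ at generation $n$ (if $v^*$ has no such descendants, replace $v^*$ by the top vertex whose sub-BRW survives until time $n$; by Proposition~\ref{prop:Nn} this differs from the original $v^*$ by $O(n^{1/r-1})$, which is absorbed into our error). Decompose
$$S_w = N_n + A + B_w, \qquad A = \sum_{u \in [o,v^*)} X_u, \qquad B_w = \sum_{u \in (v^*,w]} X_u.$$
Choosing $w$ to maximise $B_w$ in the sub-BRW of height $n - |v^*|$ gives $B_w \geq 0$ and $B_w = O((n - |v^*|)^{1/r}) = o(n^{1/r-1})$. To bound $A$, observe that conditionally on $v^* = u_0$ and $N_n = x$ the remaining displacements are iid copies of $X$ conditioned on $\{X < x\}$; since $N_n \to \infty$, this conditional law has moments converging to those of $X$, all finite by \eqref{momentass}. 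Chebyshev's inequality with the $2k$-th moment then gives
$$\P[|A| > \epsilon n^{1/r-1}] = O(n^{-k(2/r - 3)}),$$
summable once $k > 1/(2/r - 3)$ (possible since $r < 2/3$). Borel--Cantelli yields $|A| = o(n^{1/r-1})$ $\P^*$-a.s., and $M_n \geq S_w \geq N_n - o(n^{1/r-1})$.

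For the upper bound, for each $v$ with $|v| = n$ let $u(v) \in [o,v]$ be the vertex on the path from $o$ to $v$ with maximal displacement, and write $S_v = X_{u(v)} + \tilde S_v$, so that $M_n \leq N_n + \max_{|v|=n} \tilde S_v$; the goal is to show $\max_{|v|=n} \tilde S_v = o(n^{1/r-1})$ $\P^*$-a.s. The guiding intuition is again one big jump: by Proposition~\ref{prop:Nn}, for any $C > 0$ only $O(1)$ many displacements in the tree exceed $T_n := \alpha n^{1/r} - C n^{1/r-1}$, and a first-moment computation on pairs (two uniformly chosen vertices are in ancestor--descendant relation with probability $O(m^{-n})$) shows that with probability tending to one no two of them are ancestor--descendant-related. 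Consequently on every path the second-largest displacement is at most $T_n$, and $\tilde S_v$ is a random walk of $n-1$ truncated steps that one attempts to control using the moderate deviations estimate of Lemma~\ref{lemma_RW}.

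The main obstacle is upgrading this to a uniform bound over all $Z_n$ paths. A naive many-to-one union bound gives
$$\P[\max_{|v|=n} \tilde S_v > \epsilon n^{1/r-1}] \leq m^n \P[\tilde S_n > \epsilon n^{1/r-1}],$$
whose Gaussian-type tail $\exp(-c n^{2/r-3})$ fails to absorb $m^n$ when $r$ is close to $2/3$. Circumventing this requires exploiting the correlations between sibling paths, for instance by coupling to the truncated BRW with step $X \wedge T_n$, bounding its maximum via a Biggins-type martingale argument, and combining with the Poisson sparsity of exceedances above $T_n$ to ensure that each path carries at most one big jump.
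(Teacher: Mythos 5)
The decisive step of your upper bound is missing, and the intermediate statement you reduce it to is false. You bound $M_n \leq N_n + \max_{|v|=n}\tilde S_v$, where $\tilde S_v$ is the path sum with the largest ancestral displacement deleted, and aim for $\max_{|v|=n}\tilde S_v = o(n^{1/r-1})$. In fact this maximum is of order $n^{1/r}$: by \eqref{linspeed}, eventually a.s. there is a particle $w$ at time $n/2$ with $S_w \approx \alpha (n/2)^{1/r}$ and a descendant $v \in D_n$ of $w$ with $S_v - S_w \approx \alpha (n/2)^{1/r}$, while every single displacement along $[o,v]$ is at most about $\alpha (n/2)^{1/r} = 2^{-1/r}\alpha n^{1/r}$ (apply the analogue of \eqref{linspeed} for $N$ to the first $n/2$ generations and to the subtree of $w$); hence $\tilde S_v \geq (2^{-1/r}\alpha - o(1))\, n^{1/r}$. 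So ruling out two near-record jumps on one path and truncating the remaining steps at $T_n \approx \alpha n^{1/r}$ cannot help: exponentially many paths still carry a second jump of polynomial size, which is also why the union bound fails for every $r < 2/3$, not just near $2/3$ (at scale $\varepsilon n^{1/r-1}$ the tail of the truncated walk is governed by one jump, roughly $n\exp(-\lambda\varepsilon^r n^{1-r})$, not by the Gaussian term $\exp(-cn^{2/r-3})$), and why the suggested coupling to a truncated BRW with a Biggins-type argument cannot produce an $o(n^{1/r-1})$ bound. The paper avoids this entirely by bounding positions rather than jump-deleted sums: particles whose largest ancestral displacement is at most $s_n = \alpha n^{1/r} - T n^{1/r-1}\log n$ are shown, via the tilted moment estimate of Lemma \ref{lem:12} and a crude $m^n$ union bound, to stay below $\alpha n^{1/r} - n^{1/r-1}\log n$, which is eventually below $N_n$ (classes $\mathcal{A}_n$, $\mathcal{B}_n$, $\mathcal{C}_n$); for the remaining particles the expected number of admissible paths is only polynomial, of order $n^{1+T/\sigma}$, so the polynomial moderate-deviation bound of Lemma \ref{lemma_RW} at scale $\sqrt{n\log n} = o(n^{1/r-1})$ suffices (Lemma \ref{ref:lem23}). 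One never needs a Gaussian tail to beat $m^n$.

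There is also a quantitative error in your lower bound. When the argmax vertex $v^*$ has no descendant in generation $n$ you replace it by the top vertex whose subtree survives to time $n$ and assert that the change is $O(n^{1/r-1})$, ``absorbed into our error''. But your error budget is $o(n^{1/r-1})$, and by Proposition \ref{prop:Nn} the gaps between the top order statistics of the displacements are of exact order $n^{1/r-1}$ (asymptotically exponential on that scale), so this substitution is not negligible: as written you only obtain $M_n \geq N_n - O(n^{1/r-1})$, which does not prove the lemma. The paper's Lemma \ref{ref:lem23} instead works directly with a generation-$n$ descendant of the argmax and bounds, by a first-moment estimate over the polynomially many paths carrying a jump above $s_n$, the probability that the rest of such a path lies below $-\varepsilon n^{1/r-1}$. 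The remaining ingredients of your lower bound (localisation of $|v^*|$, conditional moments of the other displacements, Chebyshev with high moments) are sound, but the patch in the degenerate case does not close the argument.
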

	The scaling and convergence of $M_n$ given in \eqref{convdistrsmallr}
        for
        $  r <\frac 23$ is a direct consequence of Proposition~\ref{prop:Nn} and \eqref{compare}.
        It says that
              $M_n$ is asymptotically determined by one big displacement. \\

              The boundary  case  $  r =\frac 23$ is more subtle and requires more detailed information about the extremes of the displacements.
              Let us give a heuristic argument for \eqref{mainsmallrcrit}.
              Consider the order statistics of $\left\{ X_v \: : \: v \in \mathcal{N}_n \right\}$,
	\begin{equation}\label{eq:3:extremes}
		N_n = N_n^{(1)} \geq N_n^{(2)} \geq \ldots \geq N_n^{(Y_n)} = \min_{v \in \mathcal{N}_n}X_v.
	\end{equation} 
	It turns out that when $r =\frac 23$ there is a polynomial number of big jumps in $\{X_{v}\}_{|v|\leq n}$ that can affect $M_n$. Consider a particle $v \in D_n$ that had an ancestor whose displacement is among the aforementioned big jumps, say $N_n^{(j)}$ for $j(v)=j \leq n^\const$. Then the position of $v$
	is composed of   $N_n^{(j)}$ and a sum $S_{n,j}=S_{n,j}(v)$ of displacements of other ancestors of $v$. 
	One can show that given $Z$ and $\{N_n^{(j)}\}_{j \leq n^\const}$, the $S_{n,j}$'s are asymptotically independent.
	Since the $S_{n,j}$'s are also asymptotically normal, by conditioning on $Z$ and the $N_n^{(j)}$'s, we see that
	\begin{align*}
		\P^* \left[ \frac{M_n - \alpha n^{3/2}}{\sigma\sqrt{n}} \leq x \right] & \approx \P^* \left[ \max_{j \leq n^\const}\frac{N_n^{(j)} + S_{n,j} - \alpha n^{3/2}}{\sigma\sqrt{n}} \leq x \right] \\
					& \approx \E^* \left[ \prod_{j \leq n^\const} \Phi \left( x - \frac{N_n^{(j)} - \alpha n^{3/2}}{\sigma\sqrt{n}} \right) \right] \\
					& \approx \E^* \left[ \prod_{v \in \mathcal{N}_n} \Phi \left( x - \frac{X_v - \alpha n^{3/2}}{\sigma\sqrt{n}} \right) \right],
	\end{align*}
	where $\Phi$ denotes the cumulative distribution function (c.d.f.) of a centred Gaussian distribution with variance $\sigma^{-2}$.	The last quantity can be described in terms of the point process $\Lambda_n$:
	\begin{align*}
		\E^* \left[ \prod_{v \in \mathcal{N}_n} \Phi \left( x - \frac{X_v - \alpha n^{3/2}}{\sigma\sqrt{n}} \right) \right] & = \E^* \left[ \exp \left\{  \sum_{v \in \mathcal{N}_n}   \log \left( \Phi \left( x - \frac{X_v - \alpha n^{3/2}}{\sigma\sqrt{n}} \right)\right) \right\} \right] \\
			& =  \E^* \left[ \exp \left\{  \int  \log \left( \Phi \left( x - y \right)\right) \Lambda_n(dy) \right\} \right] \\
			& \to \E^* \left[ \exp \left\{  \int  \log \left( \Phi \left( x - y \right)\right) \Lambda(dy) \right\} \right] \\
			& =\E^* \left[\exp \left\{ - \int \left( 1-\Phi ( x - y) \right) \mu(W, dy) \right\} \right] \\
& =\E^* \left[ \exp \left\{- a \rho W \int \Phi(y-x)  e^{-y} dy\right\}\right] =  H_{2/3}(x),
	\end{align*}
where we used \eqref{laplace} in the second to last equality. We conclude that both large and typical displacements of the particles contribute to the second term in the asymptotic expansion of $M_n$. 
	From this sketch one can also see that $H_{2/3}(x)$ is the c.d.f. of the rightmost particle of the point process $\Lambda$ with independent, Gaussian shifts. More precisely, let $\{\xi_k\}_{k \geq 1}$ be the points of a point process with intensity measure given by \eqref{intensity} such that
	\begin{equation*}
		\Lambda = \sum_{k=1}^\infty \epsilon_{\xi_k}   
	\end{equation*}
	and take a collection $\{\eta_k\}_{k\geq 1}$ of iid random variables with common cumulative distribution function $\Phi$, independent of $\Lambda$. Consider a new point process $\sum_{k=1}^\infty \epsilon_{\xi_k+\eta_k}$ and note that, by the same arguments as above, the distribution of the rightmost particle is given by
	\begin{align*}
		\P^* \left[ \max_{k \geq 1} \left( \xi_k +\eta_k \right) \leq x\right]  & = \E^* \left[ \exp \left\{ \int \log \Phi(x-y) \: \Lambda (dy) \right\} \right] \\
			& = \E^*\left[ \exp \left\{  -\int  \left(1-\Phi ( x - y) \right) \mu(W, dy) \right\} \right] = H_{2/3}(x).
	\end{align*}

	In the case $r > \frac 23$ the limiting behaviour is different. In contrast to the boundary case, there is an exponential number of $N_n^{(j)}$'s, i.e. big jumps that can affect $M_n$. This in turn leads to a much greater number of $S_{n,j}$'s that can contribute which in turn yields a more concentrated asymptotic behaviour. 

\subsection{The space-time envelopes}

 We already mentioned the significance of the biggest displacement for the convergence in law. As we will see, this is also the case for the 
	almost sure behaviour. 

		\begin{proposition}\label{prop:Nn_LIL}
		Suppose that Assumptions \ref{assumption_RW}  and \ref{as:BP} are satisfied. Then 
		\begin{equation*}
			\liminf_{n \to \infty}\frac{N_n - \alpha n^{1/r} + \sigma n^{1/r-1}\log\log n}{n^{1/r-1}} = \sigma \log\left(a \rho W\right), \, \P^*\text{-a.s.}
		\end{equation*}
		and for any positive, non-decreasing function $\psi \colon [1, \infty) \to \R$ such that $\psi(n) = o(n)$, 
		\begin{equation}\label{Nnas}
                  \limsup_{n \to \infty}\frac{N_n - \alpha n^{1/r} - \sigma n^{1/r-1}\psi(n)}{n^{1/r-1}} =
                  \begin{cases} -\infty & \text{ if }	\int\limits_1^\infty e^{-\psi(x)} \: dx <\infty \\
                    +\infty & \text{ if } \int\limits_1^\infty e^{-\psi(x)} \: dx =\infty\, . \\
                    \end{cases}
		\end{equation}
	
	\end{proposition}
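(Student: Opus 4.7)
I would condition on $\mathcal{F}_\infty := \sigma(Z_k : k \geq 0)$. Under this conditioning $\{X_v\}_{v\neq o}$ is iid with the law of $X$, so $N_n$ is the maximum of $Y_n = \sum_{k=1}^n Z_k$ iid copies. Kesten--Stigum gives $Y_n/m^n \to mW/(m-1)$ $\P^*$-a.s.\ on $\{W > 0\}$, and expanding the stretched-exponential tail yields the fundamental asymptotic
\[
Y_n\,\bar F\!\bigl(\alpha n^{1/r} + \sigma n^{1/r-1}x\bigr) \;\longrightarrow\; \tfrac{amW}{m-1}\,e^{-x} \qquad \P^*\text{-a.s.},
\]
uniformly on compacts in $x$. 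This is the same computation underlying Proposition~\ref{prop:Nn}. All estimates below follow from it together with the conditional Borel--Cantelli lemmas, applied pathwise on $\{W>0\}$.

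For the limsup integral test, consider the events $B_n^{(\pm L)} := \{\max_{|v|=n}X_v > \alpha n^{1/r} + \sigma n^{1/r-1}(\psi(n)\pm L)\}$, which are conditionally independent across $n$ given $\mathcal{F}_\infty$ because they involve disjoint generations, and satisfy $\P[B_n^{(\pm L)}\mid \mathcal{F}_\infty] \sim aWe^{\mp L}e^{-\psi(n)}$. If $\int^\infty e^{-\psi(x)}dx < \infty$, conditional first Borel--Cantelli applied to $B_n^{(-L)}$ gives $\max_{|v|=n}X_v \leq \alpha n^{1/r}+\sigma n^{1/r-1}(\psi(n)-L)$ eventually; the assumption $\psi(n)=o(n)$ makes the envelope eventually nondecreasing, so the bound transfers from per-generation maxima to the running maximum $N_n$, and sending $L\to\infty$ yields $\limsup = -\infty$. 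If $\int^\infty e^{-\psi(x)}dx = \infty$, conditional second Borel--Cantelli applied to the independent family $\{B_n^{(+L)}\}_n$ gives $N_n \geq \max_{|v|=n}X_v > \alpha n^{1/r}+\sigma n^{1/r-1}(\psi(n)+L)$ infinitely often, and $L\to\infty$ yields $\limsup = +\infty$.

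For the liminf, put $u_n(\eta) := \alpha n^{1/r} + \sigma n^{1/r-1}[\log\tfrac{amW}{m-1} - \log\log n + \eta]$; the asymptotic gives $Y_n\bar F(u_n(\eta)) \sim e^{-\eta}\log n$ and hence $\P[N_n\leq u_n(\eta)\mid \mathcal{F}_\infty] \sim n^{-e^{-\eta}}$ on $\{W>0\}$. For $\eta<0$ the series $\sum n^{-e^{-\eta}}$ converges, conditional first Borel--Cantelli gives $N_n > u_n(\eta)$ eventually, and sending $\eta\uparrow 0$ produces the lower bound $\sigma\log(amW/(m-1))$ for the target liminf.

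The matching upper bound is the main obstacle: I need $A_n := \{N_n\leq u_n(\eta)\}$ to occur infinitely often for each $\eta>0$, despite strong positive correlation (the running maximum is monotone in $n$). My plan is to argue along a sparse subsequence $n_k := \lfloor k^{1+\delta}\rfloor$ with $\delta = \delta(\eta) > 0$ chosen so that $(1+\delta)e^{-\eta}\leq 1$ (ensuring $\sum_k\P[A_{n_k}\mid\mathcal{F}_\infty] = \infty$) and the gaps $n_{k+1}-n_k \sim k^\delta$ are large enough that $m^{-(n_{k+1}-n_k)}$ decays super-polynomially in $k$. This super-polynomial decay, combined with the conditional independence of displacements across generations $\leq n_j$ and $>n_j$ and the monotonicity $u_{n_j}(\eta)\leq u_{n_k}(\eta)$, yields the asymptotic factorization
\[
\P[A_{n_j}\cap A_{n_k}\mid \mathcal{F}_\infty] \;\sim\; \P[A_{n_j}\mid \mathcal{F}_\infty]\,\P[A_{n_k}\mid \mathcal{F}_\infty], \qquad j<k,
\]
with the correction controlled by $Y_{n_j}\bar F(u_{n_k}(\eta)) \lesssim m^{-(n_k-n_j)}\log n_k$. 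Conditional Kochen--Stone then gives $\P[A_{n_k}\text{ i.o.}\mid\mathcal{F}_\infty] > 0$ on $\{W>0\}$; because $\{A_n\text{ i.o.}\}$ lies in the tail $\sigma$-field of the generation-wise-independent family $\{\sigma(X_v:|v|=k)\}_k$ (the growing envelope eventually dominates any finite initial segment of displacements), Kolmogorov's zero-one law applied conditionally on $\mathcal{F}_\infty$ upgrades positivity to probability one, and sending $\eta\downarrow 0$ completes the upper bound on the liminf.
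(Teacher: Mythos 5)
Your proposal is correct, and its skeleton is the paper's: condition on the tree, use the expansion of the stretched exponential tail to get $Y_n\P[X>\alpha n^{1/r}+\sigma n^{1/r-1}x_n]\to\frac{amW}{m-1}e^{-x_n}$-type asymptotics, prove the liminf lower bound and the convergent half of the integral test by first Borel--Cantelli, the divergent half by a second Borel--Cantelli over generations, and the liminf upper bound along a sparse subsequence $n_k=k^{1+\delta}$. The genuine difference is the machinery for the two ``infinitely often'' statements. The paper runs a conditional (L\'evy) Borel--Cantelli along the filtration $\mathcal{I}_k=\sigma(Z_j,X_v:|v|\le n_k)$, after showing that the carried-over maximum exceeds the new threshold, $\{N_{n_{k-1}}>f_{n_k}\}$, only finitely often; you instead fix the environment, use the exact factorization $\P[A_{n_j}\cap A_{n_k}\mid Z]=\P[A_{n_j}\mid Z]\P[A_{n_k}\mid Z]/\P[N_{n_j}\le u_{n_k}(\eta)\mid Z]$ together with $Y_{n_j}\P[X>u_{n_k}(\eta)]\lesssim m^{n_j-n_k}\log n_k$, and invoke Kochen--Stone plus a conditional zero--one law. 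Both routes hinge on the same quantitative fact (the old maximum is negligible at the new, much higher threshold because the gaps $n_k-n_{k-1}$ diverge); yours buys a self-contained second-moment argument that never needs L\'evy's extension, the paper's buys a shorter computation. Note that with your correlation bound the Kochen--Stone ratio already tends to one, so the zero--one law step is redundant (though valid, since the i.o.\ event agrees up to null sets with a tail event of the generation blocks). Two points to tighten: the tail asymptotic must be applied with arguments $x_n$ drifting with $n$ (of order $\log\log n$ in the liminf, and as large as $o(n)$ in the integral test), not merely uniformly on compacts --- the expansion \eqref{rpower} provides this, with an error of order $\psi(n)^2/n$ that has to be absorbed when $\psi$ is large; and your transfer from per-generation maxima to the running maximum $N_n$ in the convergent case needs the envelope to be eventually nondecreasing, which does hold there because $\psi$ must then be nondecreasing and tend to infinity (the paper sidesteps this by a union bound over all $|v|\le n$ at once).
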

	
	Since $M_n - N_n = o\left(n^{1/r-1}\right)$ for $r < \frac 23$, see \eqref{compare},
        we see that in this case the description of $M_n$ will be exactly the same. The boundary case $r =\frac 23$ is more subtle. We already mentioned, in the heuristics behind the proof of Theorem~\ref{thm:second_term}, that $M_n$ is composed of the biggest jumps $(N_n^{(j)})_{j \leq n^{\rm const}}$ and sums of typical displacements $(S_{n,j})_{j\leq n^{\rm const}}$ (recall~\eqref{eq:3:extremes} and 
the discussion that follows).
        Since with high probability the $S_{j,n}$'s are in $[-\sqrt{n\log n}, \sqrt{n\log n}]$ 
 and $N_n^{(j)} - \alpha n^{3/2}$ is in $[-\sigma\sqrt{n}\log\log n, \sigma\sqrt{n}\log n]$ with high probability, one can deduce the correct order of $M_n$ by comparing both intervals. It turns out that the upper time space envelope of $M_n$ is determined by the upper 
        space-time envelope of the biggest displacement and the lower space-time envelope is determined by the 
        sum of typical displacements. 

\section{Proofs}\label{sec:proofsST}
	We begin with some auxiliary lemmas followed by the proof of Proposition~\ref{prop:Nn}. Next we present the arguments for our main result.
	
\subsection{Some auxiliary results}

	Recall that $W_n = m^{-n}Z_n$ is a positive martingale whose limit $W$ plays a significant role in the asymptotics of our model. For technical reasons we need almost sure bounds for $W_n$. 

	\begin{lemma} \label{lemma_W_tail}
		Let Assumption \ref{as:BP} be in force. There exists $\beta>0$ such that 
		\begin{equation*}
			\sum_{n=0}^\infty \P^* \bigl[W_n \leq n^{-\beta} \bigr] < \infty \quad \text{and} \quad \sum_{n=0}^\infty \P^* \bigl[W_n > n^\beta \bigr] < \infty.
		\end{equation*}
	\end{lemma}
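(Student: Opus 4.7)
The two inequalities are of rather different flavours. The upper bound is a quick consequence of the $L\log L$ maximal inequality applied to the martingale $(W_n)_{n\geq 0}$, while the lower bound combines a rate estimate for the convergence $W_n \to W$ with a left-tail estimate on $W$ itself.

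\emph{Upper tail.} By a direct computation on the recursion $Z_{n+1}=\sum_{i=1}^{Z_n}\xi_i$, the Kesten-Stigum hypothesis $\E[Z_1\log^+ Z_1]<\infty$ of Assumption~\ref{as:BP} upgrades to $\sup_{n}\E[W_n\log^+ W_n]<\infty$. Doob's $L\log L$ maximal inequality then yields $\E[W^*]<\infty$, where $W^*:=\sup_{n\geq 0}W_n$. Combining the layer-cake identity $\sum_{n\geq 1}\P[W^*\geq n]\leq \E[W^*]$ with the crude bound $\P^*[A]\leq \P[A]/\P[Z_k>0\text{ for all }k]$ gives
\[
\sum_{n\geq 1}\P^*[W_n\geq n^\beta]\leq \mathrm{const}\cdot\sum_{n\geq 1}\P[W^*\geq n^\beta]<\infty
\]
for every $\beta\geq 1$.

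\emph{Lower tail.} For $\beta>0$ we decompose
\[
\P^*[W_n\leq n^{-\beta}]\leq \P^*[W\leq 2n^{-\beta}]+\P^*\!\left[\,|W-W_n|\geq n^{-\beta}\,\right].
\]
The second term is controlled via the branching decomposition $W=m^{-n}\sum_{v\in D_n}W^{(v)}$, where $\{W^{(v)}\}_{v\in D_n}$ are iid copies of $W$ independent of $\mathcal F_n$; in particular $\E[W-W_n\mid \mathcal F_n]=0$. A conditional second-moment estimate, with the $W^{(v)}$ truncated at a polynomial level so as to accommodate the mild moment hypothesis, shows that $\P^*[|W-W_n|\geq n^{-\beta}]$ decays faster than any negative power of $n$. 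For the first term one invokes a left-tail estimate for $W$: the Laplace transform $\phi(s)=\E^*[e^{-sW}]$ satisfies the functional equation $\phi(s)=f(\phi(s/m))$, with $f$ the probability generating function of $Z_1$. Iterating this relation, together with the non-degeneracy $p_0+p_1<1$, produces a bound $\P^*[W\leq \epsilon]\leq C\epsilon^a$ for some $a>0$ and all small $\epsilon>0$. Choosing $\beta>1/a$ makes both contributions summable.

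\emph{Main obstacle.} The delicate point is the polynomial left-tail bound for $W$ near zero. The Kesten-Stigum condition instantly yields $W>0$ $\P^*$-a.s., but quantifying this into a polynomial rate relies on the functional equation for $\phi$ and uses $p_0+p_1<1$ in an essential way. Once this is granted, the remaining ingredients --- the $L\log L$ maximal inequality for the upper tail and the conditional concentration of $W-W_n$ for the lower one --- are routine.
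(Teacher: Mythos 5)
The substantive gap is in your lower-tail argument, specifically the claim that $\P^*\bigl[|W-W_n|\geq n^{-\beta}\bigr]$ decays faster than any power of $n$ via a conditional second-moment bound with the $W^{(v)}$ truncated at a polynomial level. Assumption \ref{as:BP} gives only $\E[Z_1\log^+Z_1]<\infty$: no moment of $Z_1$ (hence of $W$) of any order $p>1$ is available, and then the fluctuations of $W-W_n=m^{-n}\sum_{v\in D_n}(W^{(v)}-1)$ are heavy-tailed and only polynomially small in $n$. For instance, if $\P[Z_1>k]\asymp k^{-1}(\log k)^{-3}$ then Kesten--Stigum holds, but $\P[W>t]\gtrsim t^{-1}(\log t)^{-3}$ (condition on $Z_1\geq 2mt$ and use the law of large numbers for the first-generation subtrees), so a single exceptional subtree with $W^{(v)}\geq 2m^n n^{-\beta}$ occurs with probability of order $m^n\P[W\geq 2m^nn^{-\beta}]\asymp n^{\beta-3}$ and produces $W-W_n\gtrsim n^{-\beta}$; this is not superpolynomially small and is not even summable for $\beta>2$. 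This matters because your first term forces $\beta>1/a$ with $a$ the left-tail exponent of $W$, which can be made arbitrarily small (Schr\"oder exponent $-\log p_1/\log m$), so one cannot retreat to small $\beta$. The proposed truncation does not rescue the estimate: with $\asymp m^n$ subtrees, exceedances of any polynomial threshold are overwhelmingly likely, and the truncation bias $W_n\,\E[(W-T)^+]$ decays only at an unquantified (possibly logarithmic) rate under bare $L\log L$. The deeper point is that the decomposition $\{W_n\leq n^{-\beta}\}\subseteq\{W\leq 2n^{-\beta}\}\cup\{W-W_n\geq n^{-\beta}\}$ is too lossy: the second event is typically caused by $W$ being large (one big descendant subtree), not by $W_n$ being small. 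The paper instead transfers the left tail of $W$ to $W_n$ directly via lower-deviation theorems for $Z_n$: in the Schr\"oder case $\P^*[Z_n\leq m^nn^{-\beta}]\sim\P^*[W\leq n^{-\beta}]$ (Fleischmann--Wachtel), combined with the polynomial left tail of $W$ (Biggins--Bingham); in the B\"ottcher case the probability decays stretched-exponentially. An input of this strength is needed; your left-tail bound for $W$ (via the functional equation) is fine, but it does not by itself control $W_n$ at the same scale.

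A secondary error: the assertion that $\E[Z_1\log^+Z_1]<\infty$ "upgrades" to $\sup_n\E[W_n\log^+W_n]<\infty$ is false; by Fatou it would imply $\E[W\log^+W]<\infty$, which is equivalent to $\E[Z_1(\log^+Z_1)^2]<\infty$ (Bingham--Doney), strictly stronger than Assumption \ref{as:BP} (the same heavy-tailed example as above violates it). Fortunately this half of the lemma needs no maximal inequality: Markov gives $\P^*[W_n\geq n^\beta]\leq \E^*[W_n]\,n^{-\beta}\leq \const\cdot n^{-\beta}$, summable for $\beta>1$, which is exactly the paper's one-line argument.
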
	

	\begin{proof}
		The second part is immediate, since $\P^* \bigl[W_n > n^\beta \bigr] \leq \E^* [W_n] n^{-\beta} = n^{-\beta}$.
		For the first part we need to distinguish between Schröder and Böttcher cases, that is $p_0+p_1>0$ and $p_0+p_1=0$ respectively. 
		In the former case, by~\cite[Theorem 4]{FW06} (note that $\P[W>0] = \P[Z_n >0, \: n\geq 0]$ under Assumption \ref{as:BP}),
		\begin{align*}
			\P^*\left[W_n \leq n^{-\beta}\right] & = \P^*\left[Z_n \leq m^nn^{-\beta}\right] \\
				&\leq \const \cdot \P\left[0< Z_n \leq m^nn^{-\beta}\right] \sim \const \cdot \P\left[0<W < n^{-\beta}\right].
		\end{align*}
		By~\cite[Theorem 4]{BB93} the left tail of $W$, i.e. $\P^*[W \leq x ]$ exhibits a polynomial decay, so for $\beta>0$ large enough $\sum_n\P^*[W \leq n^{-\beta}]< \infty$. 
		Turning to the Böttcher case we denote $k^* = \min \{ k \: : \: p_k>0\}$. One can use~\cite[Theorem~6]{FW06}, for $\varepsilon = (\log m/ k^*)^{-1}$, to get
		\begin{equation*}
			\limsup_{n \to \infty} (2n)^{-\varepsilon \beta} \log \P^*[Z_n \leq m^nn^{-\beta}] <0
		\end{equation*}
		and so the probabilities $\P^*[W_n \leq n^{-\beta}]$ decay faster than any polynomial for any fixed $\beta>0$.
	\end{proof}
	
	Lemma~\ref{lemma_W_tail} implies that for sufficiently large $n$, $n^{-\beta} \leq W_n \leq n^\beta$, $\P^*$-a.s. 
	The next two lemmata are statements about iid stretched exponential random variables which we did not find in the literature. We provide the proofs in the appendix.

\begin{lemma}\label{lem:12}
		Let Assumption \ref{assumption_RW} be in force. Let $\delta \in \left(\alpha 2^{-1/r}, \alpha\right)$ and take $x_n$ to be any sequence such that $x_n \sim \alpha^{r-1}n^{1-1/r} $. Then for $\hat{X} =  X \1_{\{ X < \delta n^{1/r}\}}$ we have
		\begin{equation*}
			 \E \left[\exp\left\{ \lambda x_n \hat X \right\} \right] \leq 1 + \frac{\lambda^2  x_n^2}{ 2} + o\left(\frac{1}{  n^{2(1/r -1)}}\right).
		\end{equation*}
	\end{lemma}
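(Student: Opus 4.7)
The strategy is to perform a third-order Taylor expansion of $\E\bigl[e^{\lambda x_n \hat X}\bigr]$ and to bound each resulting contribution separately. Write
\begin{equation*}
\E\bigl[e^{\lambda x_n \hat X}\bigr] = 1 + \lambda x_n \E[\hat X] + \tfrac{1}{2}(\lambda x_n)^2 \E[\hat X^2] + R,
\end{equation*}
where $R := \E\bigl[e^{\lambda x_n \hat X} - 1 - \lambda x_n \hat X - \tfrac{1}{2}(\lambda x_n \hat X)^2\bigr]$. Since $\E X = 0$ and $\E X^2 = 1$, the linear and quadratic terms differ from $0$ and $1$ only by $\E[X^k \1_{X \geq \delta n^{1/r}}]$ for $k=1,2$. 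Combining the standard identity $\E[X \1_{X \geq y}] = y\P[X\geq y] + \int_y^\infty \P[X \geq t]\,dt$ (and its $X^2$ analogue) with the stretched exponential tail $\P[X \geq y] = a(y) e^{-\lambda y^r}$ shows that these truncation defects are of order $e^{-cn}$ for some $c > 0$. Since $x_n^2 \sim \alpha^{2(r-1)}/n^{2(1/r-1)}$, one concludes
\begin{equation*}
1 + \lambda x_n \E[\hat X] + \tfrac{1}{2}(\lambda x_n)^2 \E[\hat X^2] = 1 + \tfrac{1}{2}\lambda^2 x_n^2 + o\bigl(1/n^{2(1/r-1)}\bigr).
\end{equation*}

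It remains to show $R = o(x_n^2)$. Choose $A_n := 1/(\lambda x_n)$, which grows like $n^{1/r-1}$; on $\{X \leq A_n\}$ one has $|\lambda x_n X| \leq 1$, so the elementary inequality $|e^y - 1 - y - y^2/2| \leq \tfrac{e}{6}|y|^3$ (valid for $|y|\leq 1$) yields
\begin{equation*}
\bigl|\E\bigl[(e^{\lambda x_n \hat X} - 1 - \lambda x_n \hat X - \tfrac{1}{2}(\lambda x_n \hat X)^2)\,\1_{X \leq A_n}\bigr]\bigr| \leq C\,x_n^3\,\E|X|^3.
\end{equation*}
Under Assumption \ref{assumption_RW} and \eqref{momentass}, both tails of $X$ decay faster than any polynomial (the positive tail is stretched exponential, and the left tail is integrable to any order in both regimes), so $\E|X|^3 < \infty$ and this contribution is $O(x_n^3) = o(x_n^2)$. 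On $\{X \geq \delta n^{1/r}\}$ we have $\hat X = 0$ and the contribution to $R$ vanishes.

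The main obstacle is the intermediate range $\{A_n < X < \delta n^{1/r}\}$, where the integrand in $R$ is nonnegative and dominated by $e^{\lambda x_n X}$, so the task reduces to estimating $J := \int_{A_n}^{\delta n^{1/r}} e^{\lambda x_n y}\,dF_X(y)$. Integration by parts writes $J$ as two boundary terms plus $\lambda x_n \int_{A_n}^{\delta n^{1/r}} e^{\lambda x_n y}\P[X\geq y]\,dy$, and together with $\P[X \geq y] \sim a e^{-\lambda y^r}$ the whole problem reduces to controlling $e^{\lambda \varphi(y)}$ on $[A_n, \delta n^{1/r}]$, where $\varphi(y) := x_n y - y^r$ is convex. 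Hence its maximum over the interval is attained at an endpoint: a direct computation gives $\varphi(A_n) \sim -C_1 n^{1-r}$ and $\varphi(\delta n^{1/r}) \sim h(\delta)n$ with $h(\delta) := \alpha^{r-1}\delta - \delta^r$. The decisive point is that $\delta < \alpha$ forces $h(\delta) < 0$, via $h(\delta)/\delta = \alpha^{r-1} - \delta^{r-1} < 0$ on $(0,\alpha)$ (recall $r - 1 < 0$). Consequently $\max_{y \in [A_n, \delta n^{1/r}]} \varphi(y) \leq -c n^{1-r}$ for large $n$, so $J$ and its accompanying boundary terms are all smaller than any negative power of $n$, in particular $o(x_n^2)$. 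Summing the three contributions delivers the asserted bound.
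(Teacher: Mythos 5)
Your proof is correct in substance, but it follows a genuinely different route from the paper. The paper expands $e^x \leq 1 + x + \dots + \frac{x^{2k}}{(2k)!}e^{\max\{x,0\}}$ with $k$ the smallest integer exceeding $\frac{2-r}{2(1-r)}$, bounds the intermediate moment terms by $O(x_n^3)$ using finiteness of $\E[X^j]$, $j \leq 2k$, and then controls the remainder $x_n^{2k}\E\bigl[\hat X^{2k}e^{\lambda x_n \max\{\hat X,0\}}\bigr]$ by the tail-integral formula and a three-window splitting of the integration range (at $n^{-1/r+\varepsilon}$ and $n^{-1+r^2\varepsilon}$ after rescaling), trading the large power $x_n^{2k}$ against polynomially growing integrands. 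You instead stop at a second-order expansion, cut at $A_n = 1/(\lambda x_n) \asymp n^{1/r-1}$, use a cubic remainder bound plus $\E|X|^3<\infty$ below $A_n$, and above $A_n$ observe that the exponent $\varphi(y)=x_n y - y^r$ is convex, so its maximum on $[A_n,\delta n^{1/r}]$ sits at an endpoint, where $\varphi(A_n)\sim -C_1 n^{1-r}$ and $\varphi(\delta n^{1/r})\sim (\alpha^{r-1}\delta-\delta^r)n<0$ by $\delta<\alpha$; this makes the whole intermediate contribution stretched-exponentially small, $O(n^{1/r}e^{-cn^{1-r}})$, hence $o(x_n^2)$. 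Your argument avoids both the $r$-dependent choice of $k$ and the $\varepsilon$-window analysis, at the price of nothing essential; it uses only $\delta<\alpha$, which is all the paper's proof uses as well.

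One justification as written is wrong, though easily repaired: on $\{X \leq A_n\}$ it is not true that $|\lambda x_n X|\leq 1$, since $X$ may be arbitrarily negative. What you actually need is that the cubic remainder bound $\bigl|e^y-1-y-\tfrac{y^2}{2}\bigr|\leq \tfrac{e}{6}|y|^3$ holds for all $y\leq 1$, not only for $|y|\leq 1$: for $y\leq 0$ the Lagrange form gives $\bigl|e^y-1-y-\tfrac{y^2}{2}\bigr| = \tfrac{|y|^3}{6}e^{\xi}\leq \tfrac{|y|^3}{6}$ with $\xi\in(y,0)$ (alternatively, handle $y\leq -1$ crudely by $e^y+1+|y|+\tfrac{y^2}{2}\leq C|y|^3$). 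With that one-line correction, the bound $C x_n^3\E|X|^3 = o(x_n^2)$ on $\{X\leq A_n\}$ stands, and the rest of your argument goes through.
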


\begin{lemma}\label{sumstat}
		Let Assumption \ref{assumption_RW} be in force and $r > \frac 23$. Then for $m > 1$ and $\varepsilon > 0$,
\begin{equation*}
	\sum_{n=1}^\infty \P\Bigl[S_n - \alpha n^{1/r} \geq \frac{(1+\varepsilon) r \log m}{2 \alpha} n^{2-1/r} \Bigr] m^n
< \infty\, .
\end{equation*}
	\end{lemma}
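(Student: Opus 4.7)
The plan is to set $t_n:=\alpha n^{1/r}+cn^{2-1/r}$ with $c=(1+\varepsilon)r\log m/(2\alpha)=(1+\varepsilon)/(2\sigma)$ and work from the Taylor expansion
\[
\lambda t_n^r = n\log m + \frac{1+\varepsilon}{2\sigma^2}\,n^{3-2/r} + o(n^{3-2/r}),
\]
obtained via $\log m=\lambda\alpha^r$ and $1/\sigma=\lambda r\alpha^{r-1}$. In particular, the single-jump tail bound $n\P[X\ge t_n]\cdot m^n$ is already of order $n\exp(-(1+\varepsilon)n^{3-2/r}/(2\sigma^2)+o(n^{3-2/r}))$, which is summable because $3-2/r>0$ when $r>2/3$. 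The idea is then to organise the event $\{S_n\ge t_n\}$ according to how many of the $X_i$ exceed $\delta n^{1/r}$ for a fixed $\delta\in(\alpha/2^{1/r},\alpha)$.

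Write $\hat X_i:=X_i\mathds{1}_{\{X_i<\delta n^{1/r}\}}$, $\hat S_n:=\sum_i\hat X_i$, $S'_{n-1}:=\sum_{i\ge 2}X_i$, and use
\[
\P[S_n\ge t_n]\le \P[\hat S_n\ge t_n] + n\int_{\delta n^{1/r}}^{\infty}\P[X\in dy]\,\P[S'_{n-1}\ge t_n-y] + \binom{n}{2}\P[X\ge\delta n^{1/r}]^2.
\]
The last (multi-big-jump) term is $\le \const\cdot n^2 e^{-2\lambda\delta^r n}$, and the inequality $2\lambda\delta^r>\log m$ (guaranteed by $\delta>\alpha/2^{1/r}$) makes it summable against $m^n$. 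For the first (no-big-jump) term I would apply Chernoff's inequality with $s=\lambda x_n$ and $x_n=(1+\eta)\alpha^{r-1}n^{1-1/r}$ for small fixed $\eta>0$; Lemma~\ref{lem:12} supplies $\E[e^{s\hat X}]\le 1+\lambda^2 x_n^2/2+o(x_n^2)$, from which $\P[\hat S_n\ge t_n]\cdot m^n\le\exp(-\eta n\log m+O(n^{3-2/r}))$, summable because $n$ dominates $n^{3-2/r}$ for $r<1$.

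The main contribution is the one-big-jump integral, which I would split at $y=t_n-Kn^{2-1/r}$ for a fixed $K\in(0,1/\sigma)$. The outer range $y>t_n-Kn^{2-1/r}$ is bounded by $n\P[X\ge t_n-Kn^{2-1/r}]$, which gives a contribution $\le n\cdot m^{-n}\exp(-(c-K)n^{3-2/r}/\sigma+o(n^{3-2/r}))$, summable since $c>K$. In the main range $y\in[\delta n^{1/r},t_n-Kn^{2-1/r}]$ Lemma~\ref{lemma_RW} gives $\P[S'_{n-1}\ge t_n-y]\le\exp(-(1-o(1))(t_n-y)^2/(2n))$; parametrising $y=\alpha n^{1/r}+un^{2-1/r}$ and expanding $\lambda y^r$ reduces the integrand's exponent on scale $n^{3-2/r}$ to $-(u/\sigma+(c-u)^2/2)n^{3-2/r}$, whose minimum at $u^{\ast}=c-1/\sigma$ equals $\varepsilon/(2\sigma^2)$. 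A Laplace-type estimate then yields that the full integral, after multiplying by $m^n$, is at most $\mathrm{poly}(n)\cdot\exp(-\varepsilon n^{3-2/r}/(2\sigma^2))$, summable for $r>2/3$. The main obstacle will be making the moderate-deviation asymptotics of Lemma~\ref{lemma_RW} uniform in $y$ over the integration range and tracking the lower-order corrections in the expansions of $\lambda y^r$ and $a(y)$; the crucial matching is that the saddle-point value $\varepsilon/(2\sigma^2)$ converts the $(1+\varepsilon)$ in the expansion of $\lambda t_n^r$ into the $\varepsilon$ governing summability.
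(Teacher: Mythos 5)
Your overall skeleton is the paper's: split $\{S_n\ge t_n\}$ by the number of increments exceeding $\delta n^{1/r}$, kill the no-big-jump part by Chernoff via Lemma~\ref{lem:12} (your inflated tilt $(1+\eta)\alpha^{r-1}n^{1-1/r}$ is a harmless variant, since the proof of that lemma only needs $(1+\eta)\alpha^{r-1}<\delta^{r-1}$), dispose of two big jumps by subadditivity of $t\mapsto t^r$, and extract the answer from the one-big-jump term, where your saddle value $u^\ast=c-1/\sigma$ giving $\varepsilon/(2\sigma^2)$ is exactly the optimization the paper performs in discretized form ($J_4$ with windows of width $q_n/N$, $N>4/\varepsilon$). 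The gap is in your treatment of the one-jump integral over the range $y\in[\delta n^{1/r},\,t_n-Kn^{2-1/r}]$. There you invoke the bound $\P[S'_{n-1}\ge t_n-y]\le\exp\bigl(-(1-o(1))(t_n-y)^2/(2n)\bigr)$ uniformly, attributed to Lemma~\ref{lemma_RW}. For $y$ near the lower end, $t_n-y\asymp\theta n^{1/r}$, this inequality is simply false: $S'_{n-1}$ is not truncated, and one further big jump already gives $\P[S'_{n-1}\ge\theta n^{1/r}]\ge\const\cdot e^{-\lambda\theta^r n(1+o(1))}$, which dwarfs $e^{-\theta^2n^{2/r-1}/2}$ because $2/r-1>1$. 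The Gaussian regime for these tails ends at deviations of order $n^{1/(2-r)}$, and $n^{1/r}\gg n^{1/(2-r)}$ for every $r<1$, so a large part of your integration range lies outside it; moreover Lemma~\ref{lemma_RW} only states asymptotics at the fixed scale $cn^{2-1/r}$ and provides neither a bound at scale $n^{1/r}$ nor the uniformity in $y$ your Laplace argument needs. This is precisely the obstacle you flag at the end but do not resolve, and it is where the real work of the lemma lies.

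The paper closes this hole in two steps that your argument would have to reproduce. For jumps of intermediate size, $X_j\in(\delta n^{1/r},\alpha n^{1/r}-3q_n]$ (the term $J_3$), it truncates the remaining $n-1$ increments (the overflow is absorbed into the two-jump term) and applies the exponential Markov bound with the fixed tilt $\sim\lambda\alpha^{r-1}n^{1-1/r}$ from Lemma~\ref{lem:12} to the sum of jump plus truncated remainder; this is valid at deviation scale $n^{1/r}$, where no Gaussian bound is available, and produces the decay $\exp(-\const\cdot\varepsilon\, n^{3-2/r})$ after the cancellation against $m^n$. For near-maximal jumps, $X_j>\alpha n^{1/r}-3q_n$ (the term $J_4$), it discretizes the jump size into finitely many windows of width $q_n/N$ with $N>4/\varepsilon$, so that the moderate-deviation estimate of Lemma~\ref{lemma_RW} is only used at finitely many fixed constants $c$ and no uniform version is needed; the resulting discrete optimization is your saddle-point computation. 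Your outer-range and no-jump and two-jump estimates are fine, but as written the main range of the one-jump term does not go through without this additional truncation-plus-tilting (or an equivalent uniform moderate-deviation upper bound), so the proposal has a genuine gap at its central step.
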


	We will often use the following asymptotics for the $r$-th power, which follows easily from the mean value theorem. Assume that $(a_n), (b_n)$ are positive sequences such that $a_n \to \infty, b_n \to \infty, \frac{b_n}{a_n} \to 0$. Then
        \begin{equation}\label{rpower}
          (a_n + b_n)^r = a_n^r + b_n \frac{r}{a_n^{1-r}} + o \left( \frac{b_n}{a_{n}^{1-r}} \right).
          \end{equation}
\subsection{The biggest displacement}	
	
	\begin{proof}[Proof of Proposition~\ref{prop:Nn}]
		Recall $Y_n= | \{ v \: : \: \exists x \in D_n, \: v \leq x \}|$. Following~\cite{Durrett1983} write
		\begin{equation*}
			Y_n =\sum_{j=1}^n Z_n^{(j)}, \qquad Z_n^{(j)} =  | \{ v \in D_j \: : \: \exists x \in D_n, \: v \leq x \}|.
		\end{equation*}
		Note that by the branching property and the law of large numbers, for fixed $k \in \N$ as $n \to \infty$,  $Z_n^{(n-k)} Z_{n-k}^{-1} \to \P[Z_k>0]$ $\P^*$-a.s. and therefore by an appeal to~\eqref{eq:2:W}, 
		\begin{equation*}
			m^{-n}Y_n \to \rho W \qquad \P^* - a.s.
		\end{equation*}
		To prove weak convergence of $N_n$, fix $x \in \R$, take
		\begin{equation*}
                  e_n = e_n(x) := \alpha n^{1/r} + \sigma
                  n^{1/r-1}x
		\end{equation*}
		and write
		\begin{equation*}
			\P^* \left[\left. \frac{N_n - \alpha n^{1/r}}{\sigma n^{1/r-1}} \leq x \: \right| \:  \mathcal{T} \: \right] =\P[N_n \leq e_n \: | \: \mathcal{T} \:] = (1-\P[X>e_n])^{Y_n}.
		\end{equation*}
		Since, using \eqref{rpower},  $m^n\P[X>e_n] \to ae^{-x}$ and $m^{-n}Y_n \to \rho W$ $\P^*$-a.s. we have
		\begin{equation*}
			\P^*[N_n \leq e_n \: | \: \mathcal{T} \: ] = \exp \left\{ -a \rho W e^{-x}(\Delta_n +1) \right\}
		\end{equation*}
		for some $\Delta_n \to 0$, $\P^*$ -a.s. The weak convergence of $N_n$ follows after taking expectations and using the dominated convergence theorem. 
		
		We now turn our attention to  the random measures $\Lambda_n$, which we will analyse using the corresponding Laplace transforms. Take $f$ from the class $C_c^+(\R)$ of continuous, non-negative, compactly supported functions and note that 
		by a standard approximation argument
		\begin{equation}\label{eq:4:wer}
			\lim_{n \to \infty}m^n \E \left[ f \left( \frac{X-\alpha n^{1/r}}{\sigma n^{1/r-1}} \right) \right] = a \int f(s) e^{-s}ds.
		\end{equation}
		In other words the sequence of measures $m^n \P[ \sigma^{-1} n^{1-1/r} (X-\alpha n^{1/r}) \in \cdot ]$ converges to $a e^{-s}ds$ in vague topology. This, by the merit of~\cite[Proposition 3.21]{R87}, implies
		that $\Lambda_n \to^d \Lambda$. 
		For convince we will sketch the argument. The convergence~\eqref{eq:4:wer} for $f \in C_c^+(\R)$ implies, by boundedness of $f$, that
		\begin{equation*}
			\lim_{n \to \infty}m^n L_n(f)  =  -a \int (1-e^{-f(s)} )e^{-s}ds, \, \text{ where } L_n(f) = \log \E \left[\exp \left\{ -  f \left( \frac{X-\alpha n^{1/r}}{\sigma n^{1/r-1}} \right) \right\} \right]
		\end{equation*}
		which further allows us to infer that $\P^*$-a.s. for any $t>0$, 
		\begin{align*}
			\E \left[ \left. \exp \left\{ - t \int f(s) \: \Lambda_n (ds) \right\} \right| \mathcal{T} \right] & = \exp \left\{  Y_nt L_n(f) \right\} \to \exp \left\{ -a \rho W t\int (1-e^{-f(s) })e^{-s}ds \right\} .
		\end{align*}
		If we combine the above convergence with the dominated convergence theorem we can conclude that $\Lambda_n \to^d \Lambda$.

	\end{proof}

	\begin{proof}[Proof of Proposition~\ref{prop:Nn_LIL}]
		To treat the lower space-time envelope take
		$\gamma >0 $ and define 
		\begin{equation*}
			f_n = f_n(\gamma, Y_n) := \alpha n^{1/r} - \sigma n^{1/r-1}\log\log n + \sigma n^{1/r-1}\log\left(\gamma a \frac{Y_n}{m^n} \right).
		\end{equation*}
		Using the inequality $1-x \leq e^{-x}$ and \eqref{rpower}, a calculation gives 
		\begin{align*}
			\P^*[ N_n \leq f_n| \: \mathcal{T}\: ] & \leq  \exp \left( - Y_n \P^*[X> f_n \: | \: \mathcal{T} \:] \right) \\
			 & =\exp \left(  -\gamma^{-1} (\log n) \exp \left\{  1+\Delta_n  \right\} \right)
		\end{align*}
		for some $\Delta_n \to 0$ $\P^*$-a.s.
		After taking expectations we see that $\sum_{n=1}^\infty \P^*[ N_n \leq f_n] <\infty$ provided $\gamma <1$.  By the Borel-Cantelli lemma  
		\begin{equation*}
			\frac{N_n - \alpha n^{1/r-1} + \sigma n^{1/r-1}\log\log n}{n^{1/r-1}} > \sigma\log\left(\gamma a \frac{Y_n}{m^n}\right)
		\end{equation*}
		for sufficiently large $n$. Letting $n \to \infty$ followed by $\gamma \uparrow 1$ yields
		\begin{equation}\label{llupp}
			\liminf_{n \to \infty}\frac{N_n - \alpha n^{1/r-1} + \sigma n^{1/r-1}\log\log n}{n^{1/r-1}} \geq \sigma \log\left(a \rho W\right).
		\end{equation}
		To show that ``$\leq$'' holds in \eqref{llupp} as well, fix $\gamma >1$, take $n_k = k^{1+\varepsilon}$ for $\varepsilon <\gamma-1$ and consider the $\sigma$-algebras
		\begin{equation*}
			\mathcal{I}_k = \sigma \left( \mathcal{T}, \:X_{v} \: : \:  |v| \leq n_k\right).
		\end{equation*} 
		We have
		\begin{equation*}
			\P^*[N_{n_k} \leq f_{n_k} \: | \: \mathcal{I}_{k-1}] = \1_{\left\{ N_{n_{k-1}}  \leq f_{n_k}\right\}} \P^*\left[ \left. \max_{n_{k-1} < |v| \leq n_k} X_v \leq f_{n_k} \right| \mathcal{I}_{k-1} \right].
		\end{equation*}
		We first show that the events
		\begin{equation*}
			A_k = \left\{ N_{n_{k-1}}  > f_{n_k}\right\}
		\end{equation*}
		can occur only finitely many times. We have  
		\begin{align*}
			\P^*[A_k]   \leq& \P^*\left[  N_{n_{k-1}} > \alpha n_{k}^{1/r} - \sigma n_k^{1/r-1}\log\log n_k + \sigma n_k^{1/r-1}\log\left(\gamma a \frac{Z_{n_k}}{m^{n_k}} \right) \right] \\  
			\leq & \P^*[W_{n_k} \leq n_k^{-\beta}] \\
				& + \P^*\left[ W_{n_k} > n_k^{-\beta} , \: N_{n_{k-1}} > \alpha n_{k}^{1/r} - \sigma n_k^{1/r-1}\log\log n_k + \sigma n_k^{1/r-1}\log\left(\gamma a \frac{Z_{n_k}}{m^{n_k}} \right)  \right] \\
			\leq & \P^*[W_{n_k} \leq n_k^{-\beta}] \\
				& + \P^*\left[ N_{n_{k-1}} >\alpha n_k^{1/r} - \sigma n_k^{1/r-1}\log\log n_k - \beta \sigma n_k^{1/r-1}\log n_k + \sigma n_k^{1/r-1}\log(\gamma a) \right] \\
			\leq & \P^*[W_{n_k} \leq n_k^{-\beta}] \\& + \const \cdot  m^{n_{k-1}} \P^*\left[X > \alpha n_k^{1/r} - \sigma n_k^{1/r-1}\log\log n_k - \beta \sigma n_k^{1/r-1}\log(\gamma n_k) 
    \right] \\
				& \leq \P^*[W_{n_k} \leq {n_k}^{-\beta}]  + \const \cdot m^{- k^{\varepsilon}}.
		\end{align*} 
		Applying the Borel-Cantelli lemma and using Lemma \ref{lemma_W_tail}, $\1_{A_k^c} =1$ for sufficiently large $k$. Set 
		\begin{equation*}
			\Delta Y_{n_k} = \left| \Delta \mathcal{N}_{n_k} \right|, \quad \Delta \mathcal{N}_{n_k} =  \left\{ v \in \bigcup_{j=n_{k-1}}^{n_k} D_j \: : \: \exists w \in D_{n_k}, \: v \leq w \right\}.
		\end{equation*}	
		Using similar arguments as for $Y_n$, $m^{-n_k}\Delta Y_{n_k} \to \rho W$ $\P^*$ - a.s.		
             We can write for sufficiently large $k$,
		\begin{equation*}
			\P^*[N_{n_k} \leq f_{n_k} \: | \: \mathcal{I}_{k-1}] = \P^*\left[ \left. \max_{v \in \Delta \mathcal{N}_{n_k}} X_v \leq f_{n_k} \right| \mathcal{I}_{k-1} \right] = (1- \P^*[X > f_{n_k} \: | \: \mathcal{T} \:])^{\Delta Y_{n_k}}.
		\end{equation*}
		Since we have
		\begin{equation*}
			m^{n_k}\P^*[X > f_{n_k} \: | \: \mathcal{T} \:] = \left( \gamma  \frac{Y_{n_k}}{m^{n_k}} \right)^{-1} \log n_k (1+o(1)) 
		\end{equation*}
		we can write, for some $\Delta_k \to 0$ a.s.,
		\begin{equation*}
			\P^*[N_{n_k} \leq f_{n_k} \: | \: \mathcal{I}_{k-1}]  = \exp \left( - \gamma^{-1} \log n_k (\delta_k+1) \right) = k^{-\frac{1+\varepsilon}{\gamma}(\Delta_k+1)}.
		\end{equation*}
		By the choice of our parameters, $\frac{1+\varepsilon}{\gamma} <1$. Using a conditional version of the Borel-Cantelli Lemma (see \cite[Theorem 5.3.2]{RD19}) yields that $\P^*$-a.s.
		\begin{equation*}
			N_{n_k} \leq f_{n_k}
		\end{equation*}
		for infinitely many $k$. Letting $k \to \infty$ and $\gamma \downarrow 1$ yields
		\begin{equation*}
			\liminf_{n \to \infty}\frac{N_n - \alpha n^{1/r-1} + \sigma n^{1/r-1}\log\log n }{n^{1/r-1}} \leq \sigma \log\left(a \rho W\right).
		\end{equation*}

	We finally consider the upper space-time envelope. Take $\psi(x)$ such that $\int_0^{\infty} e^{-\psi(x)} dx<\infty$ but $\psi(n) = o(n)$ and consider, for $K \in \R$,
		\begin{equation*}
			g_n  := \alpha n^{1/r} + \sigma n^{1/r-1}(\psi(n)+K).
		\end{equation*}
		Using \eqref{rpower} as always, one can check that $m^n\P[X>g_n] \sim ae^{-K} e^{-\psi(n)}$.
		Take the union bound
		\begin{equation*}
			\P^*[N_n > g_n] \leq m^n \P[X>g_n] = a e^{-K}e^{-\psi(n)}(1+o(1))
		\end{equation*}
		to obtain
		\begin{equation*}
			N_n  \leq \alpha n^{1/r} + \sigma n^{1/r-1}(\psi(n) +K)
		\end{equation*}
		for sufficiently large $n$. If we take $n\to \infty$, followed by $K \to -\infty$, we will obtain the first part of  \eqref{Nnas}.
		Now suppose that $\int_0^{\infty}e^{-\psi(x)} dx =\infty$. Put
		\begin{equation*}
			\mathcal{H}_n = \sigma( X_v, Z_k \: : \: k \in \mathbb{N}, \:|v|\leq n).
		\end{equation*}
		Use the inequality $1-(1-x)^y\geq xy(1-xy)$, $x \in (0,1)$, $y>0$ to obtain 
		\begin{align*}
			\P^*\left[\left.\max_{|v|=n} X_v> g_n \: \right| \: \mathcal{H}_{n-1} \right] & = 1-\P[X \leq g_n]^{Z_n} = 1-( 1-\P[X > g_n])^{Z_n} \\
					& \geq  Z_n\P[X>g_n](1-Z_n\P[X>g_n])  \\ &\geq \const \cdot (W + \delta_n) ae^{-K}e^{-\psi(n)}
		\end{align*} 
		for some $\delta_n \to 0$, $\P^*$-a.s.
		By yet another appeal to the conditional Borel-Cantelli lemma we obtain that infinitely often, a.s.
		\begin{equation*}
			N_n \geq \max_{|v|=n}X_v \geq \alpha n^{1/r} + \sigma n^{1/r-1}( \psi(n)+K).
                      \end{equation*}
                      Again, take $n \to \infty$, followed by $K \to \infty$ to obtain the second part of \eqref{Nnas}.
	\end{proof}

\subsection{Branching random walk}

To show \eqref{mainlarger}, we will prove two inequalities.

\begin{proposition}\label{lbmax}		
		Suppose that Assumptions \ref{assumption_RW}  and \ref{as:BP} are satisfied with $r >\frac 23$. Then,
		\begin{equation*}
			\liminf_{n \to \infty} \frac{M_n - \alpha n^{1/r}}{n^{2-1/r}} \geq \frac{r \log m}{2 \alpha} \qquad \P^*\rm{- a.s.}
		\end{equation*}
\end{proposition}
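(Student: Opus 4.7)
The plan is to apply a second-moment (Paley--Zygmund) argument to the counting variable $N_n := \#\{v : |v|=n,\, S_v > x_n\}$ with $x_n := \alpha n^{1/r} + c\,n^{2-1/r}$ and $c = \tfrac{r\log m}{2\alpha} - \varepsilon$ for arbitrary fixed $\varepsilon > 0$. Once $\P^*[N_n > 0]$ is bounded away from zero, the a.s. conclusion follows from the independence of subtrees rooted at an intermediate generation via Borel--Cantelli, and then letting $\varepsilon \downarrow 0$ along a countable sequence gives the claim.

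The first moment is $\E[N_n] = m^n \P[S_n > x_n]$ by many-to-one. To lower-bound $\P[S_n > x_n]$, we use the ``one big jump plus moderate deviation'' decomposition: estimating $\P[S_n > x_n]$ from below by $n\int f_X(\alpha n^{1/r} - w)\P[S_{n-1} > cn^{2-1/r}+w]\,dw$, we insert the expansion
\begin{equation*}
\lambda(\alpha n^{1/r}-w)^r = n\log m - w\,\tfrac{r\log m}{\alpha}\,n^{1-1/r} + o(w n^{1-1/r})
\end{equation*}
and the Gaussian moderate deviation $\log\P[S_{n-1}>y]\sim -y^2/(2n)$ from Lemma~\ref{lemma_RW}. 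Completing the square in $w$ and Gaussian integration yield (up to polynomial factors)
\begin{equation*}
\P[S_n>x_n] \gtrsim m^{-n}\exp\bigl(2c^*(c^*-c)\,n^{3-2/r}\bigr),\quad c^*:=\tfrac{r\log m}{2\alpha},
\end{equation*}
so $\E[N_n]\to\infty$ exponentially in $n^{3-2/r}$ whenever $c<c^*$. The second moment is computed by many-to-two, decomposing over the level $k=|v\wedge w|$ of the most recent common ancestor:
\begin{equation*}
\E[N_n^2] \leq \E[N_n] + C_1\sum_{k=0}^{n-1} m^{2n-k}\,\E\!\left[\P[S_{n-k}>x_n-S_k\mid S_k]^2\right].
\end{equation*}
Since $|S_k|=O(\sqrt{n})\ll n^{2-1/r}$ (using $r>2/3$), re-applying the same saddle-point estimate to $\P[S_{n-k}>x_n]$ shows that the $k=0$ term is comparable to $\E[N_n]^2$, while the geometric factor $m^{-k}$ controls the remaining summands; overall $\E[N_n^2]\leq C_2\,\E[N_n]^2$, and Paley--Zygmund gives $\P^*[N_n>0]\geq 1/C_2$ for large $n$. (A standard truncation of the offspring distribution handles the case $\E[Z_1^2]=\infty$, at the price of a constant that tends to $c^*$ as the truncation level grows.)

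To upgrade to an almost-sure statement, set $\ell_n := \lfloor\log^2 n\rfloor$ and apply the above bound inside each of the $Z_{\ell_n}$ independent sub-BRWs rooted at the generation-$\ell_n$ particles; since $|S_v|=O(\sqrt{\ell_n})$ is negligible compared to $n^{2-1/r}$ for $|v|=\ell_n$, each sub-BRW produces a descendant exceeding $x_n$ at generation $n$ with conditional probability at least $1/C_2$. Conditioning on $\mathcal{F}_{\ell_n} := \sigma(Z_k,X_v : k\in\N, |v|\leq\ell_n)$ gives $\P^*[M_n\leq x_n\mid\mathcal{F}_{\ell_n}]\leq(1-1/C_2)^{Z_{\ell_n}}$, and Lemma~\ref{lemma_W_tail} ensures $Z_{\ell_n}\geq m^{\ell_n}\ell_n^{-\beta}$ eventually $\P^*$-a.s., which is super-polynomial in $n$ and renders $\sum_n\P^*[M_n\leq x_n]<\infty$; Borel--Cantelli concludes. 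The main obstacle is the sharp first-moment estimate (and its uniformity in $k$ for the second moment): isolating the constant $c^*=r\log m/(2\alpha)$ requires the delicate saddle-point balance between the stretched-exponential cost of a single big jump and the Gaussian cost of the companion moderate deviation, exactly reflecting the heuristic description given in the introduction.
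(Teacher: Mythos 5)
Your plan hinges on the claim that the many-to-two sum is controlled by the factor $m^{-k}$, giving $\E[N_n^2]\leq C_2\,\E[N_n]^2$. This is the step that fails, and it fails badly, because the dominant way for two relatives to both exceed $x_n$ is a \emph{single shared big jump} on the common ancestral path. Concretely, in your bound
\begin{equation*}
\E[N_n^2] \leq \E[N_n] + C_1\sum_{k=0}^{n-1} m^{2n-k}\,\E\!\left[\P[S_{n-k}>x_n-S_k\mid S_k]^2\right],
\end{equation*}
the factor $\E\bigl[\P[S_{n-k}>x_n-S_k\mid S_k]^2\bigr]$ is not comparable to $\P[S_n>x_n]^2$: on the event that one of the first $k$ steps already exceeds $x_n+\sqrt{n}$ (probability of order $k\,m^{-n}e^{-2c^*c\,n^{3-2/r}}$, since $\lambda x_n^r = n\log m + 2c^*c\,n^{3-2/r}(1+o(1))$), the conditional probability is bounded below by a constant, so its square is too. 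Hence the $k$-th summand is at least of order $k\,m^{n-k}e^{-2c^*c\,n^{3-2/r}}$. Since $3-2/r<1$ for $r<1$, we have $n^{3-2/r}=o(n)$, so $\E[N_n]^2\approx e^{4c^*(c^*-c)n^{3-2/r}}=e^{o(n)}$, while already the single term $k=n/2$ contributes at least $m^{n/2-o(n)}$. Thus $\E[N_n^2]/\E[N_n]^2$ blows up exponentially in $n$, Paley--Zygmund only yields an exponentially small lower bound on $\P[N_n>0]$, and the whole a.s.\ upgrade built on a uniform constant $1/C_2$ collapses. (This is not a technicality: when an early ancestor makes the big jump, \emph{all} of its $\approx m^{n-k}$ descendants lie above $x_n$, so $N_n$ is huge on a rare event, which is exactly the situation in which the plain second-moment method cannot work without truncating or decorrelating the count. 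A smaller issue: your first-moment lower bound is written with a density $f_X$, which is not assumed to exist, though that is easily repaired.)

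The paper's proof circumvents this correlation problem without any second moment. It fixes $\varepsilon>0$ and first shows, by a direct binomial/Borel--Cantelli estimate conditioned on generation $\varepsilon n$ (using Lemma \ref{lemma_W_tail}), that a.s.\ eventually at least $e^{a_n}$ particles of generation $n$, with \emph{distinct} ancestors in generation $\varepsilon n$, make a single displacement $\geq \alpha n^{1/r}-b_n$, where $b_n\asymp n^{2-1/r}$ and $a_n\asymp n^{3-2/r}$ reflect the tilt you computed in your saddle point. The remaining path increments of these particles are then genuinely independent walks of length $(1-\varepsilon)n-1$, and by Lemma \ref{lemma_RW} the maximum of $e^{a_n}$ such independent walks a.s.\ eventually exceeds $c_n=\frac{r\log m}{\alpha}n^{2-1/r}$, since $a_n>\frac{c_n^2}{2(1-\varepsilon)n}$. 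Adding the jump and the moderate deviation and letting $\varepsilon\to 0$ gives the constant $\frac{r\log m}{2\alpha}$. If you want to keep a counting approach, you would have to restrict $N_n$ so that the big jump and the compensating moderate deviation are forced into disjoint, independent pieces of the tree, which is in effect what the paper's two-step construction does.
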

\begin{proof}
		Let $\varepsilon \in \left(0, \frac 12\right)$ and define
		\begin{equation*}
		 	a_n =\frac{1+2 \varepsilon}{2} \Bigl(\frac{r \log m}{\alpha}\Bigr)^2 n^{3-2/r}, \quad b_n =\frac{(1+2 \varepsilon)^2}{2} \frac{r \log m}{\alpha} n^{2-1/r}
		 \end{equation*}
		 and
		 \begin{equation*}	
		 	c_n = \frac{r \log m}{\alpha} n^{2-1/r}.
		\end{equation*}
		In the first step we show that with probability one, for all $n$ large enough there are many particles in generation $n$ making a large step. 
		Moreover, these particles all have a different ancestor in generation $[\varepsilon n]$. More precisely, for $w \in  D_{[\varepsilon n]}$, let $D^w_{[(1-\varepsilon)n]}$ denote the subset of $D_n$ consisting of descendants of $w$, and define
		\begin{equation*}
			A_n = \{w \in D_{[\varepsilon n]} \colon \exists v_w \in D^w_{[(1-\varepsilon)n]} \text{ s.t.~} X_{v_w} > \alpha n^{1/r} - b_n \}.
		\end{equation*} 
		Note that $v_w$ is a descendant of $w$. We  will show that 
		\begin{equation*}
			\sum_{n=1}^\infty \P^*\left[|A_n|  \leq e^{a_n}\right] <\infty,
		\end{equation*}
which implies that
$\{|A_n|  \leq e^{a_n} \}$ occurs for only finitely many $n$ almost surely. 
	By Lemma~\ref{lemma_W_tail}, there is some $\beta > 0$ such that
		\begin{equation*}
			\sum_{n=1}^\infty \P^*\left[W_n \leq n^{-\beta}\: \mbox{ or } \: W_{[\varepsilon n]} > (\varepsilon n)^\beta\right] <\infty.
		\end{equation*}
		It remains to show that
		\begin{equation}\label{ifWsmall}
			\sum_{n=1}^\infty \P^*\left[|A_n|  \leq e^{a_n}, \: W_{[\varepsilon n]} \leq (\varepsilon n)^\beta, \: W_n > n^{-\beta}\right] <\infty.
		\end{equation}
		For $i \in \N$, let $Z_{[(1- \varepsilon)n]}^{(i)}$ denote the number of descendants in generation $n$ of the $i$th particle from generation $[\varepsilon n ]$. 
		Then 
		\begin{equation*}
			\sum_{i=1}^{Z_{[\varepsilon n]}} Z_{[(1- \varepsilon)n]}^{(i)} = Z_n,
		\end{equation*}
		where the $Z_{[(1- \varepsilon)n]}^{(i)}$'s are independent copies of $Z_{[(1- \varepsilon)n]}$. Using the independence of the branching mechanisms and the displacements, we estimate for sufficiently large $n$, taking into account the inequality $1-x\leq e^{-x}$,
		\begin{align*} 
			&\P^*\left[|A_n|  \leq e^{a_n}, \: W_{[\varepsilon n]} \leq (\varepsilon n)^\beta, \: W_n>n^{-\beta}\right] \\
			&\leq  {\rm const} \cdot \E \left[\1_{\{ W_{[\varepsilon n]} \leq (\varepsilon n)^\beta, \: W_n> n^{-\beta} \}} \binom{Z_{[\varepsilon n]}}{Z_{[\varepsilon n]} - [e^{a_n}]} \prod_{i=1}^{Z_{[\varepsilon n]} - [e^{a_n}]}  
				 \P[X \leq \alpha n^{1/r} - b_n]^{Z_{[(1- \varepsilon)n]}^{(i)}} \right] \\
			&\leq  {\rm const } \cdot \E \left[  \1_{\{ W_n > n^{-\beta}\}} m^{2 \varepsilon n e^{a_n}}   \exp \left\{ -   \P[X > \alpha n^{1/r} - b_n] \sum_{i=1}^{Z_{[\varepsilon n]}-  [e^{a_n}]} Z_{[(1- \varepsilon)n]}^{(i)}  \right\} \right]\\
			&=  {\rm const}\cdot \E \left[  m^{2 \varepsilon n e^{a_n}}   \exp \left\{ -   \P[X > \alpha n^{1/r} - b_n]  \left( Z_n - \sum_{i=1}^{[e^{a_n}]} Z_{[(1- \varepsilon)n]}^{(i)} \right) \right\} \right] \\
			&\leq  {\rm const}\cdot \E \left[  m^{2 \varepsilon n e^{a_n}}   \exp \left\{ -   \P[X > \alpha n^{1/r} - b_n]  \left( m^{n}n^{-\beta} - \sum_{i=1}^{[e^{a_n}]} Z_{[(1- \varepsilon)n]}^{(i)} \right) \right\} \right].
		\end{align*}
		Now note that by Markov's inequality, for $n$ large enough
		\begin{equation*}
			\P^* \left[ \sum_{i=1}^{[e^{a_n}]} Z_{[(1- \varepsilon)n]}^{(i)} > m^{(1-\varepsilon/2)n} \right] \leq\const \cdot m^{-\varepsilon n /2}e^{a_n}\leq m^{-\varepsilon n /3} 
		\end{equation*}
		and therefore we can continue further with 
		\begin{align*}
			&\P^*\left[|A_n|  \leq e^{a_n}, \: W_{[\varepsilon n]} \leq (\varepsilon n)^\beta, \: W_n>n^{-\beta}\right] \\
			& \leq  m^{2 \varepsilon n e^{a_n}}  \exp \left\{ -   \P[X \geq \alpha n^{1/r} - b_n] (m^nn^{-\beta}-  m^{(1-\varepsilon/2)n})    \right\} +m^{-\varepsilon n/3}.
		\end{align*}
		Since
		\begin{equation*}
			\P[X \geq \alpha n^{1/r} - b_n](m^nn^{-\beta}-  m^{(1-\varepsilon/2)n})    \sim a n^{-\beta}e^{\lambda r \alpha^{r-1}n^{1-1/r}b_n} = a n^{-\beta}\exp \left\{ (1+2 \varepsilon)a_n \right\}
		\end{equation*}
we see that indeed \eqref{ifWsmall} holds true because $n^{1+\beta} e^{a_n} = o(e^{(1+2\varepsilon)a_n})$.\\
	
In the second step we consider $\max_{w \in A_n}\left(S_{v_w} - X_{v_w} - S_w\right)$. Note that the random walks $( S_{v_w} - X_{v_w} - S_w)_{w \in A_n}$ are independent and independent of $\{X_{v_w}, w \in A_n\}$ and have the same distribution as $S_{[(1-\varepsilon)n]}$. We show that $\max_{w \in A_n} \left(S_{v_w} - X_{v_w} - S_w\right) \leq c_n$ 
		occurs only finitely often almost surely. Write
		\begin{align*}
			\P^* \left[\max_{w \in A_n} \left(S_{v_w} - X_{v_w} - S_w\right) \leq c_n\right] \leq & \ \P^*\left[\left.\max_{w \in  A_n} \left(S_{v_w} - X_{v_w} - S_w \right)\leq c_n \right| \ |A_n| > e^{a_n}\right]\\
				& \ + \P^*[|A_n| \leq e^{a_n}].
		\end{align*}
		But 
		\begin{align*}
			\P^*\left[\left.\max_{w \in A_n} \left(S_{v_w} - X_{v_w} - S_w\right) \leq c_n \right| |A_n| > e^{a_n}\right] & \leq \Bigl(1- \P[S_{[(1- \varepsilon)n]} > c_n] \Bigr)^{e^{a_n}}\\
				& \leq \exp \left\{ -e^{a_n} \P[S_{[(1- \varepsilon)n]} > c_n] \right\}\\
				&=\exp \left\{ -\exp \left(a_n - \frac{c_n^2}{2(1- \varepsilon)n } (1+o(1) \right) \right\}
		\end{align*} 
		where we used Lemma~\ref{lemma_RW} for the last equality.
		The last expression is summable by the choice of $\varepsilon$. 
		In the third step we prove that 
		\begin{equation*}
			\liminf_{n \to \infty} n^{2-1/r}\min_{w \in A_n} S_w \geq -\varepsilon^{1/3}
		\end{equation*}
		provided $\varepsilon$ is small enough.
		For sufficiently large $n$ we can write
		\begin{align*}
			\P^*\left[\min_{w \in A_n} S_w < -\varepsilon^{1/3} n^{2-1/r} \right] & \leq {\rm const } \: m^n \P[X> \alpha n^{1/r} -b_n] \P[S_{[\varepsilon n]} < -\varepsilon^{1/3} n^{2-1/r}] \\
				& \leq {\rm const} \exp \left\{ \frac{2 r \log m }{\alpha \sigma} n^{3-2/r} - \frac{\varepsilon^{-1/3}}{3} n^{3-2/r} \right\},
		\end{align*}
		where in the last inequality we applied Lemma~\ref{lemma_RW} yet again. Taking $\varepsilon$ sufficiently small secures our claim. 
		All three steps together imply that
		\begin{equation*}
			\liminf_{n \to \infty} \frac{M_n - \alpha n^{1/r}}{n^{2-1/r}} \geq \lim_{n \to \infty}\frac{c_n-b_n}{n^{2-1/r}} - \varepsilon^{1/3} =  \left( 1 - \frac{(1+2\varepsilon)^2}{2} \right) \frac{r \log m}{2\alpha} - \varepsilon^{1/3}.
		\end{equation*}
		Letting $\varepsilon \to 0$ finishes the proof.
	\end{proof}

\begin{proposition}\label{asindupper}
		Suppose that Assumptions \ref{assumption_RW}  and \ref{as:BP} are satisfied and $r > \frac 23$. Then
		 \begin{equation*}
 			\limsup_{n \to \infty} \frac{M_n - \alpha n^{1/r}}{n^{2-1/r}} \leq \frac{r \log m}{2 \alpha} \quad \P^*-a.s. 
 		\end{equation*}
	\end{proposition}
	
	\begin{proof}
		Let $\varepsilon>0$. Using a union bound
\begin{equation*}
\P^*\Bigl[M_n - \alpha n^{1/r} \geq \frac{(1+\varepsilon) r \log m}{2 \alpha}  n^{2-1/r} \Bigr] 
\leq \P\Bigl[S_n - \alpha n^{1/r} \geq \frac{(1+\varepsilon) r \log m}{2 \alpha} n^{2-1/r} \Bigr] \frac{m^n}{1-q}.
\end{equation*}
		It remains to show that the r.h.s. is summable, and this is the statement of Lemma \ref{sumstat}.
		
\end{proof}

	We can now turn to the case $ r \leq \frac 23$, and
        prove Lemma \ref{comparelemma}.
        To analyse $M_n$ we need to partition $D_n$ into four classes of particles. The first one consists of those particles with no big displacements along their ancestral line, i.e.
	\begin{equation*}
		\mathcal{A}_n = \left\{ w \in D_n \: : \: \forall v \in [o, w], \: X_v \leq \delta n^{1/r} \right\},  
	\end{equation*} 
	where $\delta \in (\alpha 2^{-1/r},\alpha)$ is fixed. The next class consists of those particles that had (at least) two big displacements along their ancestral line, i.e.
	\begin{equation*}
		\mathcal{B}_n = \left\{ w \in D_n \: : \: \exists v, u \in [o, w], \: v \neq u, \text{ s.t. }  \: \min \{ X_v, X_u\} > \delta n^{1/r} \right\}.
	\end{equation*} 		
	All other particles have exactly one big displacement along their ancestral line. We will need to distinguish further if this
 jump is greater or smaller than
	\begin{equation}\label{s_ndefi}
		s_n = \alpha n^{1/r} -  T n^{1/r-1}\log n
	\end{equation}
	where $T$ is fixed to be sufficiently large, that is $T  > \frac{1+3 \alpha^{1-r} \lambda^{-1}}{(1-r)}$. Define
	\begin{equation*}
		\mathcal{C}_n = \left\{ w \in D_n \: : \:  \exists v \in [o,w] \text{ s.t. } X_v \in (\delta n^{1/r}, s_n], \text{ and } \forall u \in [o, w]\setminus \{v\}, \, X_u \leq \delta n^{1/r} \right\}
	\end{equation*}		
	and
	\begin{equation*}
		\mathcal{D}_n= \left\{ w \in D_n \: : \:  \exists v \in [o,w] \text{ such that } X_v > s_n, \text{ and } \forall u \in [o, w]\setminus \{v\} \, X_u \leq \delta n^{1/r} \right\}. 
	\end{equation*}		
	By the merit of Proposition~\ref{prop:Nn_LIL}, for sufficiently large $n$,  $\mathcal{D}_n$ is non-empty $\P^*$ - a.s. If we denote
	\begin{equation*}
		M_n^{\mathcal{A}} = \max_{w \in \mathcal{A}_n }S_w, \quad M_n^{\mathcal{B}} = \max_{w \in \mathcal{B}_n }S_w, \quad  M_n^{\mathcal{C}} = \max_{w \in \mathcal{C}_n }S_w, \quad M_n^{\mathcal{D}} = \max_{w \in \mathcal{D}_n }S_w
	\end{equation*}
	we can write
	\begin{equation*}
		M_n = \max \left\{ M_n^{\mathcal{A}}, M_n^{\mathcal{B}}, M_n^{\mathcal{C}}, M_n^{\mathcal{D}} \right\}.
	\end{equation*}
We will see that the relevant term is $M_n^{\mathcal{D}}$.
	\begin{lemma}\label{Mnaest}
		Let Assumptions \ref{assumption_RW}  and \ref{as:BP} hold for some $r \in \left(0,\frac{2}{3}\right]$. We have $\P^*$-a.s. for sufficiently large $n$,
		\begin{equation*}
			M_n^{\mathcal{A}} \leq \alpha n^{1/r} - n^{1/r-1}\log n. 
		\end{equation*}
	\end{lemma}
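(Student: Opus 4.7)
This is a first-moment / Chernoff argument. Let $y_n = \alpha n^{1/r} - n^{1/r-1}\log n$ and set $\hat X_k = X_k \1_{\{X_k \leq \delta n^{1/r}\}}$ with $\hat S_n = \hat X_1 + \cdots + \hat X_n$. First, condition on the branching tree $\mathcal T$: for each $w\in D_n$, the displacements along $[o,w]$ are iid copies of $X$, so
\begin{equation*}
\P\bigl[w \in \mathcal{A}_n, \, S_w \geq y_n \,\big|\, \mathcal{T}\bigr] = \P\bigl[\hat S_n \geq y_n\bigr].
\end{equation*}
Summing over $w\in D_n$ and taking expectations yields the union bound
\begin{equation*}
\P^*\bigl[M_n^{\mathcal{A}} \geq y_n\bigr] \leq \E^*[Z_n]\,\P\bigl[\hat S_n \geq y_n\bigr] = \frac{m^n}{1-q}\,\P\bigl[\hat S_n \geq y_n\bigr],
\end{equation*}
where $q$ is the extinction probability.

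Next, apply an exponential Markov inequality with parameter $t_n = \lambda(1+\epsilon_n)\alpha^{r-1} n^{1-1/r}$, choosing $\epsilon_n = n^{-1/2}$. Since $x_n := (1+\epsilon_n)\alpha^{r-1} n^{1-1/r} \sim \alpha^{r-1} n^{1-1/r}$, Lemma \ref{lem:12} applies and gives
\begin{equation*}
n\log\E\bigl[e^{t_n \hat X}\bigr] \leq \tfrac{1}{2}\lambda^2\alpha^{2r-2}(1+\epsilon_n)^2\, n^{3-2/r} + o\bigl(n^{3-2/r}\bigr) = O\bigl(n^{3-2/r}\bigr).
\end{equation*}
A direct computation using $\lambda\alpha^r = \log m$ (and $\lambda\alpha^{r-1} = 1/(\sigma r)$) gives
\begin{equation*}
t_n y_n = (1+\epsilon_n)\Bigl(n\log m - \tfrac{\log n}{\sigma r}\Bigr).
\end{equation*}

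Combining, the Chernoff bound yields
\begin{equation*}
\frac{m^n}{1-q}\,\P\bigl[\hat S_n \geq y_n\bigr] \leq C\,\exp\Bigl(-\epsilon_n\, n\log m + O(\log n) + O\bigl(n^{3-2/r}\bigr)\Bigr).
\end{equation*}
The constraint $r < 3/4$ ensures $3 - 2/r < 1/3 < 1/2$, so that $n^{3-2/r} = o(\sqrt n) = o(\epsilon_n\, n)$, making the exponent $\sim -\sqrt n\,\log m$. The probabilities are therefore summable, and Borel--Cantelli gives $M_n^{\mathcal{A}} \leq y_n$ almost surely for all large $n$.

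The delicate point is the choice of $t_n$: the ``natural'' Chernoff parameter $\lambda\alpha^{r-1}n^{1-1/r}$ would give exactly $t_n y_n \approx n\log m - \text{const}\cdot\log n$, producing only polynomial decay after multiplying by $m^n$, which is not summable. The small inflation by $1+n^{-1/2}$ injects an extra factor $m^{-\sqrt n}$, which comfortably dominates both the logarithmic correction from $y_n$ and the $O(n^{3-2/r})$ correction coming from Lemma \ref{lem:12}. The restriction $r<3/4$ appears precisely here, since the tolerated size of the Gaussian-type correction must stay below $\sqrt n$.
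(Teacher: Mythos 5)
Your proposal is correct and takes essentially the paper's route: a first-moment bound over the $m^n$ particles with truncated increments, the exponential Markov inequality controlled by Lemma \ref{lem:12}, and Borel--Cantelli (your ``$=$'' in the conditional step should be ``$\leq$'', but only the inequality is needed). The only difference is the tilt: the paper uses $\lambda(\alpha^r n + n^{1/r-1})/\gamma_n$ with $\gamma_n=\alpha n^{1/r}-n^{1/r-1}\log n$, yielding decay $\exp\bigl(-\lambda n^{1/r-1}(1+o(1))\bigr)$ with $r<3/4$ needed so that $n^{3-2/r}=o(n^{1/r-1})$, whereas your $(1+n^{-1/2})$-inflated tilt yields $\exp\bigl(-(\log m)\sqrt{n}(1+o(1))\bigr)$ and in fact only requires $n^{3-2/r}=o(\sqrt{n})$, i.e.\ $r<4/5$.
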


	\begin{proof}
		Let
		\begin{equation}\label{hatdef}
			\hat{X}_k = X_k \1_{\left\{  X_k< \delta n^{1/r} \right\}}, \quad \hat{S}_n = \sum_{k=1}^n\hat{X}_k, \quad \gamma_n =  \alpha n^{1/r} - n^{1/r-1}\log n.
		\end{equation}
		Using the Markov inequality and Lemma~\ref{lem:12} we can estimate $\P^*\left[M_n^{\mathcal{A}}> \gamma_n\right]$ in the following way:
		\begin{align*}
                  \P^*\left[M_n^{\mathcal{A}}>\gamma_n\right]& \leq \const \cdot m^n\P\left[\hat{S}_n > \gamma_n \right]\\
                  & = \const \cdot m^n\P\left[ \lambda \frac{\alpha^r n + n^{1/r-1}}{\gamma_n} \hat{S}_n > \lambda(\alpha^r n +n^{1/r-1}) \right] \\
				& \leq  \const\cdot \exp \left\{ - \lambda n^{1/r-1} \right\} \E\left[\exp\left\{ \lambda \frac{\alpha^r n +n^{1/r-1}}{\gamma_n } \hat{S}_n \right\} \right]\\
				& = \const \cdot \exp \left\{ - \lambda n^{1/r-1} \right\} \E\left[\exp\left\{ \lambda \frac{\alpha^r n + n^{1/r-1}}{ \gamma_n} \hat{X}_1 \right\} \right]^n\\
				& \leq \const \cdot \exp\left\{ - \lambda n^{1/r-1} \right\} \exp\left\{ n\lambda^2  \frac{(\alpha^r n + n^{1/r-1})^2}{2\alpha^2n^{2/r}}  + o(n^{3-2/r})  \right\} \\
				& = \const \cdot \exp\left\{ -\lambda n^{1/r-1} (1 + o(1) ) \right\}.
		\end{align*} 
		This shows that
		\begin{equation*}
			\sum_{n=1}^{\infty} \P^*\left[M_n^{\mathcal{A}}>\gamma_n\right] <\infty
		\end{equation*}
		and concludes the proof.
	\end{proof}

	\begin{lemma}
		Let Assumptions \ref{assumption_RW}  and \ref{as:BP} hold for some $r \in (0,1)$. Then, for sufficiently large $n$, $\P^*$-a.s.
		\begin{equation*}
			 \mathcal{B}_n=\emptyset\, .
		\end{equation*}
	\end{lemma}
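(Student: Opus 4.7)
The plan is to apply a first-moment argument together with Borel--Cantelli. If $\mathcal{B}_n \neq \emptyset$, then there must exist a vertex $v$ with $|v| \leq n$ and a strict ancestor $u \in [o,v]$, $u \neq v$, such that both $X_u > \delta n^{1/r}$ and $X_v > \delta n^{1/r}$. So I will bound the expected number $B_n$ of ordered pairs $(u,v)$ in the tree with $u$ a strict ancestor of $v$, $|v| \leq n$, and both $X_u, X_v > \delta n^{1/r}$, and show $\sum_n \P^*[B_n \geq 1] < \infty$.

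For the counting, note that every vertex $v$ with $|v| = k$ has exactly $k$ strict ancestors in $[o,v]$, so the total number of admissible pairs $(u,v)$ with $|v| \leq n$ equals $\sum_{k=1}^n k Z_k$, with expectation bounded by $\sum_{k=1}^n k m^k \leq C\, n m^n$. For any fixed such pair, the labels $X_u, X_v$ are independent copies of $X$, so by Assumption~\ref{assumption_RW},
\begin{equation*}
\P[X_u > \delta n^{1/r},\, X_v > \delta n^{1/r}] = \bigl( a(\delta n^{1/r}) e^{-\lambda \delta^r n}\bigr)^2 \leq C\, e^{-2\lambda \delta^r n}.
\end{equation*}
Taking expectations and then a union bound,
\begin{equation*}
\P[B_n \geq 1] \leq \E[B_n] \leq C\, n m^n e^{-2\lambda \delta^r n} = C\, n\, e^{n(\log m - 2\lambda \delta^r)}.
\end{equation*}

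The key observation is that the choice $\delta > \alpha/2^{1/r}$ gives $\delta^r > \alpha^r / 2 = (\log m)/(2\lambda)$, whence $2\lambda \delta^r > \log m$ and the exponent above is strictly negative. Hence $\sum_n \P[B_n \geq 1] < \infty$, and by the Borel--Cantelli lemma $B_n = 0$ for all sufficiently large $n$ almost surely, which means $\mathcal{B}_n = \emptyset$. Conditioning on survival only increases probabilities by a bounded factor, so the same holds $\P^*$-a.s. There is no real obstacle here: the only thing to check carefully is the exponential balance, which is precisely why the threshold $\alpha/2^{1/r}$ was chosen in the definition of $\delta$.
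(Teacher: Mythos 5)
Your proposal is correct and follows essentially the same route as the paper: a first-moment bound on the number of big-jump pairs, of order $n m^n e^{-2\lambda\delta^r n}$ (the paper uses $n^2 m^n \P[X>\delta n^{1/r}]^2$, counting over $w\in D_n$ and pairs of ancestors), with the exponent negative precisely because $\delta > \alpha/2^{1/r}$, followed by Borel--Cantelli. The only difference is bookkeeping in the counting (pairs $(u,v)$ with $|v|\le n$ versus particles in $D_n$ with pairs of ancestors), which is immaterial.
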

	\begin{proof}
		We have 
		\begin{equation*}
			\P^*\left[ |\mathcal{B}_n| \geq 1 \right] \leq \E^*\left[ |\mathcal{B}_n| \right] \leq n^2 m^n\P\left[X > \delta n^{1/r} \right]^2 \leq  \const \cdot n^2 \exp \left\{ \lambda( \alpha^r - 2 \delta^r)n \right\}
		\end{equation*}
		where the exponent in the last term is negative by the choice of $\delta$. 
	\end{proof}

	\begin{lemma}
		Let Assumptions \ref{assumption_RW} and \ref{as:BP} hold for $r \in (0,1)$.
		We have $\P^*$-a.s.
		\begin{equation*}
			M_n^{\mathcal{C} }\leq  \alpha n^{1/r}-  n^{1/r-1} \log n
		\end{equation*}
		for sufficiently large $n$.
	\end{lemma}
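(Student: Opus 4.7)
The plan is to show $\sum_{n} \P^*[M_n^{\mathcal{C}} > \gamma_n] < \infty$ with $\gamma_n := \alpha n^{1/r} - n^{1/r-1}\log n$ and conclude via Borel--Cantelli. Every $w \in \mathcal{C}_n$ has a unique ancestor $v$ at some depth $k \in \{1,\dots,n\}$ with $X_v \in (\delta n^{1/r}, s_n]$, while the remaining $n-1$ displacements along $[o,w]$ are all $\leq \delta n^{1/r}$. The many-to-one identity combined with a union bound over $k$ gives
\[
\P^*[M_n^{\mathcal{C}} > \gamma_n]
\;\leq\;
C\, n m^n \int_{\delta n^{1/r}}^{s_n}\!\P\bigl[\hat S_{n-1} > \gamma_n - y\bigr]\,\mathrm{d}F_X(y),
\]
where $\hat S_{n-1}$ is the sum of $n-1$ iid copies of $X\1_{\{X \leq \delta n^{1/r}\}}$, as in Lemma~\ref{lem:12}, and $F_X$ is the law of $X$.

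For the inner probability I would use Markov's exponential inequality with $s = \lambda \alpha^{r-1} n^{1-1/r}$, exactly as in Lemma~\ref{Mnaest}; Lemma~\ref{lem:12} then yields $\P[\hat S_{n-1} > t] \leq \exp(-\lambda\alpha^{r-1} n^{1-1/r} t + o(1))$ (in the regime $r \leq 2/3$ relevant for Lemma~\ref{comparelemma} the $O(n^{3-2/r})$ remainder in Lemma~\ref{lem:12} is in fact $o(1)$). Next I would replace $\mathrm{d}F_X(y)$ by $a\lambda r y^{r-1} e^{-\lambda y^r}\,\mathrm{d}y$ and apply the expansion \eqref{rpower}, $\lambda y^r = \log m\cdot n - \lambda r \alpha^{r-1} n^{1-1/r}(\alpha n^{1/r}-y) + O((\alpha n^{1/r}-y)^2 n^{1-2/r})$, so that the $m^n$ prefactor cancels. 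Writing $u := \alpha n^{1/r}-y \in [Tn^{1/r-1}\log n,\, (\alpha-\delta) n^{1/r}]$ and $\gamma_n - y = u - n^{1/r-1}\log n$, the combined exponent reduces, up to subleading corrections, to $-(1-r)\lambda \alpha^{r-1} n^{1-1/r}\, u + \lambda\alpha^{r-1}\log n$.

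The $u$-integral is concentrated at the lower endpoint and produces a factor $\simeq n^{-\lambda \alpha^{r-1}(1-r) T}$ times polynomial prefactors. Collecting all contributions yields an upper bound of the form $\mathrm{const}\cdot n^{A(r,\alpha,\lambda,\delta) - \lambda \alpha^{r-1}(1-r) T}$; the threshold on $T$ prescribed in the definition of $s_n$ — which picks up the extra $\delta$-dependent terms from a sharper handling of the quadratic correction — is precisely what forces this exponent strictly below $-1$. Summability in $n$ and Borel--Cantelli then give $M_n^{\mathcal{C}} \leq \gamma_n$ eventually, $\P^*$-a.s.

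The main technical obstacle is controlling the quadratic correction $O(u^2 n^{1-2/r})$ in the expansion of $y\mapsto y^r$ uniformly across the integration range: it is harmless near the lower endpoint $u \sim Tn^{1/r-1}\log n$, where the integrand concentrates, but it becomes of the same order as the leading term when $u$ is of order $n^{1/r}$. For such larger $u$, concavity of $y\mapsto y^r$ gives $\lambda y^r \geq \log m\cdot n - \lambda r \alpha^{r-1} n^{1-1/r} u$, so the integrand is still dominated by $\exp(-(1-r)\lambda \alpha^{r-1} n^{1-1/r} u)$ up to constants, and the tail contribution is exponentially small. This parallels the concluding estimate in the proof of Lemma~\ref{Mnaest}.
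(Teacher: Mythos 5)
Your overall strategy is the same as the paper's: a union bound of the form $n m^n$ times the probability that one jump in $(\delta n^{1/r}, s_n]$ plus a sum of $n-1$ truncated increments exceeds $\gamma_n=\alpha n^{1/r}-n^{1/r-1}\log n$, a Chernoff bound at scale $\lambda\alpha^{r-1}n^{1-1/r}$ controlled by Lemma \ref{lem:12}, and an integration over the law of the big jump in which the choice of $T$ in the definition of $s_n$ produces the summable power of $n$ (the paper tilts $\tilde S_{n-1}+X_n$ jointly and then bounds $\E[\exp(\cdot\, X_n)\1_{\{\delta n^{1/r}<X_n\le s_n\}}]$; conditioning on the jump value first, as you do, is an equivalent reorganization). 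However, the step you yourself single out as the main obstacle does not work as written: concavity of $t\mapsto t^r$ says the tangent line at $\alpha n^{1/r}$ lies \emph{above} the graph, so for $y=\alpha n^{1/r}-u$ one has $\lambda y^r\le n\log m-\lambda r\alpha^{r-1}n^{1-1/r}u$, the opposite of the inequality you state. This gives a \emph{lower} bound on $e^{-\lambda y^r}$ and therefore cannot dominate the integrand when $u$ is of order $n^{1/r}$. What is needed is a lower bound on $y^r$ valid on the whole range, for instance $y^r\ge s_n^{r-1}y$ for $0<y\le s_n$ (write $y=s_n\sigma$ and use $\sigma^r\ge\sigma$ on $(0,1]$ — this is exactly what the paper's rescaled integral does), or the remark that $y\mapsto -\lambda y^r+\lambda\alpha^{r-1}n^{1-1/r}y$ is convex, hence maximized at an endpoint of $[\delta n^{1/r},s_n]$, with the endpoint $y=\delta n^{1/r}$ contributing an exponent of order $-n$ because $\delta<\alpha$. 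The conclusion you want is true, but the argument given for it is false as stated.

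Two further points. Replacing $dF_X(y)$ by $a\lambda r y^{r-1}e^{-\lambda y^r}\,dy$ is not licensed by Assumption \ref{assumption_RW}: only $\P[X\ge x]=a(x)e^{-\lambda x^r}$ with $a(x)\to a$ is assumed — no density, no differentiability of $a$. The paper avoids this by integrating by parts against the tail, $\E[\psi(X)\1_{\{\delta n^{1/r}<X\le s_n\}}]=\int_{\delta n^{1/r}}^{s_n}\psi'(s)\P[X>s]\,ds+\psi(\delta n^{1/r})\P[\delta n^{1/r}<X\le s_n]$, and you should do the same. Also, your closing assertion that the paper's threshold on $T$ forces the collected exponent below $-1$ is claimed rather than computed; since $T$ is a free large parameter this is routine, but it should be checked once the polynomial prefactors (the factor $n$, the width of the effective $u$-range, the $\lambda\alpha^{r-1}\log n$ term) are gathered. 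Your parenthetical restriction of the $\exp(o(1))$ control to $r\le 2/3$ is fair — at $r=2/3$ the factor is $O(1)$ rather than $o(1)$, still harmless — and the paper's own ``$\exp(o(1))$'' has the same limitation, matching the range in which the lemma is actually used.
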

	\begin{proof}
		 To see that this is true recall $\gamma_n$ from \eqref{hatdef} and $s_n$ form~\eqref{s_ndefi}, set
		 $\hat{X}_k = X_k \1_{\left\{  X_k< \delta n^{1/r} \right\}}$ 
		 and put  
		\begin{equation*}
			\tilde{X}_n = X_n \1_{\left\{ \delta n^{1/r}< X_n \leq s_n \right\}}, \quad 
			\tilde{S}_{n-1} = \sum_{k=1}^{n-1}\hat{X}_k, 
		\end{equation*}
		and set $H_n = \left\{ M_n^{\mathcal{C} }> \gamma_n \right\}$. We have
		\begin{multline*}
			\P^*[H_n] \leq \const\cdot n m^n \P \left[ \tilde{S}_{n-1} + X_n > \gamma_n, \delta n^{1/r} <X_n \leq  s_n \right] \\
			 =  \const\cdot n m^n  \P \left[ \lambda \frac{\alpha^r n +\frac{3}{\lambda} \log n }{\gamma_n} \left(  \tilde{S}_{n-1} + X_n \right) > \lambda \left( \alpha^r n +\frac{3}{\lambda} \log n \right),  \delta n^{1/r} <X_n \leq  s_n  \right].
		\end{multline*}
		Apply the Markov inequality and Lemma \ref{lem:12} for a bound for the exponential moment of $\tilde{S}_{n-1}$ (as we did it for $\hat{S}_n$ in the proof of Lemma \ref{Mnaest}) to obtain
		\begin{equation*}
			\P^*[H_n]  \leq {\rm const} \cdot   n^{-2}\E\left[\exp \left\{  \lambda \frac{\alpha^r n + \frac 3\lambda\log n}{\gamma_n} X_n \right\}   \1_{\left\{ \delta n^{1/r}< X_n \leq s_n \right\}} \right].
		\end{equation*}
		It remains to show that
		\begin{equation}\label{eq:forsprim}
			\E\left[\exp \left\{ \lambda \frac{\alpha^r n + \frac 3\lambda\log n}{\gamma_n} X_n \right\} \1_{\left\{ \delta n^{1/r}< X_n \leq s_n \right\}} \right]
		\end{equation}
		is bounded. 
		Use the inequality
		\begin{equation*}
			\E [\psi(X)\1_{\{ \delta n^{1/r} < X \leq s_n\}} ] \leq \int_{\delta n^{1/r}}^{s_n} \psi'(s) \P[X>s]ds + \psi(\delta n^{1/r})\P[X>\delta n^{1/r}]
		\end{equation*}
		with $\psi(s) = \exp\left\{  \lambda \frac{\alpha^r n +3\lambda^{-1}\log n}{\gamma_n} s\right\} $. Since 
		\begin{equation*}
			\psi(\delta n^{1/r})\P[X>\delta n^{1/r}]\leq  \exp \left\{ \lambda \delta \left(\alpha^{r-1} -\delta^{r-1}\right)n + o(n) \right\}
		\end{equation*} 
		we will focus on the integral for which we have
		\begin{align*}
			&\int_{\delta n^{1/r}}^{s_n} \psi'(s) \P[X_n>s]ds  \leq {\rm const}\cdot  n^{1-\frac 1r} \int_{\delta n^{1/r}}^{s_n}  \exp \left\{ \lambda \frac{\alpha^r n + 3 \lambda^{-1}\log n }{\gamma_n} s - \lambda s^r \right\} ds \\
				& \leq{\rm const}\cdot  n  \int_{\delta\alpha^{-1} + o(1)}^1\exp \left\{  \lambda \frac{s_n}{\gamma_n} s( \alpha^r n +  3\lambda^{-1} \log n  -  \gamma_n s_n^{r-1}) \right\} ds.
		\end{align*}
		To check that the last term is bounded consider the exponent 
		\begin{align*}
			 \alpha^r n + \frac 3\lambda \log n  -  \gamma_n s_n^{r-1} & = \frac 3\lambda \log n  - \left( (1-r)T-1\right)\alpha^{r-1}\log n   + o(1).
		\end{align*}	
		We see that whenever 
		\begin{equation*}
			T > \frac{1+3 \alpha^{1-r} \lambda^{-1}}{(1-r)}
		\end{equation*}
		the expression in the integral is bounded by
		\begin{equation*}
			\exp \left\{  \lambda \frac{s_n}{\gamma_n} s\left( \alpha^r n + \frac 3\lambda \log n  -  \gamma_n s_n^{r-1}\right) \right\}  \leq \exp \left\{- (2 \alpha \delta^{-1}+o(1))s \log n \right\} \leq n^{-2+o(1)},
		\end{equation*}
		where the last inequality is a consequence of $s>\delta\alpha^{-1}+o(1)$.
		Thus, the integral compensates the factor $n$, so~\eqref{eq:forsprim} is indeed bounded in $n$.
	\end{proof}

	\begin{lemma}\label{ref:lem23}
		Under Assumptions \ref{assumption_RW}  and \ref{as:BP} with $r <\frac 23$, 
		\begin{equation*}
			\frac{M^{\mathcal{D}}_n - N_n}{n^{1/r-1}} \to 0, \qquad \P^*-\rm{ a.s.}
		\end{equation*}
	\end{lemma}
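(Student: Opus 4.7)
The statement decomposes into upper and lower bounds $M_n^{\mathcal{D}} - N_n = \pm\, o(n^{1/r-1})$ a.s., which I treat separately. The upper bound is a clean first-moment union computation, while the lower bound requires producing a level-$n$ descendant of the displacement-maximising vertex whose position is within $o(n^{1/r-1})$ of $N_n$.

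\textbf{Upper bound.} As $\mathcal{B}_n = \emptyset$ eventually a.s., each $w \in \mathcal{D}_n$ has a unique big-jump ancestor $v(w)$, and $S_w = X_{v(w)} + \tilde S_w$ where $\tilde S_w = \sum_{u \in [o,w] \setminus \{v(w)\}} X_u$ is a sum of $n-1$ iid copies of $X \mathbf{1}_{\{X \leq \delta n^{1/r}\}}$. Since $X_{v(w)} \leq N_n$, it suffices to bound $\max_{w \in \mathcal{D}_n} \tilde S_w$. A many-to-one enumeration over the position of $v(w)$ in $[o,w]$ gives, by independence of the displacements,
\[
\P\!\left[\max_{w\in\mathcal{D}_n} \tilde S_w > t\right] \leq n \cdot m^n \cdot \P[X > s_n] \cdot \P[\hat S_{n-1} > t] \leq n^{1+C} e^{-c t^2/n},
\]
with $n m^n \P[X > s_n] \leq n^{1+C}$ from the tail asymptotics used in defining $s_n$, and $\P[\hat S_{n-1} > t] \leq e^{-ct^2/n}$ from a sub-Gaussian Chernoff bound built out of the MGF estimate in Lemma~\ref{lem:12}. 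Choosing $t = K\sqrt{n\log n}$ with $K$ large enough makes the right-hand side summable; Borel--Cantelli combined with $\sqrt{n\log n} = o(n^{1/r-1})$ (which uses $r < 2/3$) gives the upper bound.

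\textbf{Lower bound.} Let $v^*_n$ achieve $X_{v^*_n} = N_n$. As $N_n \sim \alpha n^{1/r}$, eventually $X_{v^*_n} > s_n$, so $v^*_n$ is a big-jump vertex. Comparing the level-maxima via $M^{(n-j)} - M^{(n)} \approx -(\alpha/r)j\,n^{1/r-1}$ against their $O(n^{1/r-1})$ Gumbel-type fluctuations yields $\P[n - |v^*_n| \geq L] \lesssim m^{-L}$, whence $n - |v^*_n| = O(\log n)$ a.s. eventually. Provided $v^*_n$ has a descendant at level $n$, any such descendant $w$ lies in $\mathcal{D}_n$ (using $\mathcal{B}_n = \emptyset$) and
\[
S_w - N_n = (S_{v^*_n} - X_{v^*_n}) + (S_w - S_{v^*_n}).
\]
The first summand is a partial sum of at most $n$ iid mean-zero displacements, so $O(\sqrt{n\log\log n})$ a.s. by the LIL; taking $w$ as the argmax in the subtree of $v^*_n$, the second summand is controlled by the (truncated) BRW-max over $O(\log n)$ generations, of order $O((\log n)^{1/r})$. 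Both are $o(n^{1/r-1})$.

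\textbf{Main obstacle.} The delicate step is handling the event that $v^*_n$'s subtree fails to reach level $n$, which has positive probability asymptotically: naively one then loses an amount comparable to the gap $N_n - N_n^{(2)}$, which is typically of order $n^{1/r-1}$ (not $o(n^{1/r-1})$). The remedy is to replace $v^*_n$ by its ``best surviving competitor'': using the point-process convergence in Proposition~\ref{prop:Nn}, one exhibits a family of near-maximal big-jump vertices whose subtree-survival events are conditionally independent given the tree, and a Borel--Cantelli argument on this family should yield, a.s.\ eventually, a surviving big-jump vertex $\tilde v_n$ with $X_{\tilde v_n} = N_n - o(n^{1/r-1})$; the argument above then applies with $\tilde v_n$ in place of $v^*_n$. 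Reconciling the requirement that the candidate pool be large enough for Borel--Cantelli to defeat the subtree-extinction probability, while the candidates remain close enough to $N_n$ to preserve the $o(n^{1/r-1})$ precision, is the principal technical point hidden in the lemma.
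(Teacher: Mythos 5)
Your upper-bound half is essentially the paper's own argument: a first-moment bound over the at most $n\,m^n$ choices of (level-$n$ particle, location of its unique big jump), the observation that $m^n\P[X>s_n]\leq \const\cdot n^{T/\sigma}$, a deviation estimate at scale $K\sqrt{n\log n}=o(n^{1/r-1})$, and Borel--Cantelli. The one technical slip is the source of the bound $\P[\hat S_{n-1}>K\sqrt{n\log n}]\leq n^{-cK^2}$: it does not follow from Lemma \ref{lem:12}, whose exponential-moment estimate is at the tilt $\lambda x_n\asymp n^{1-1/r}$, which for $r<\frac 23$ is of \emph{smaller} order than the tilt $\asymp\sqrt{\log n/n}$ required at this scale (a Chernoff bound with that tilt has a vanishing exponent), and a Bernstein bound with truncation at $\delta n^{1/r}$ is equally useless. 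The correct tool is the second case of Lemma \ref{lemma_RW}, which is what the paper invokes; with that substitution this half is fine.

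The lower bound is where your proposal stops short of a proof, and you say so yourself: the case in which the subtree of the displacement-argmax $v_n^*$ dies out before generation $n$ is left open, and the remedy you sketch cannot work. By Proposition \ref{prop:Nn} the top order statistics of $\{X_v\}_{|v|\leq n}$ have spacings of exact order $n^{1/r-1}$, so there is in general no ``surviving competitor'' within $o(n^{1/r-1})$ of $N_n$: on the event that $v_n^*$ has no descendant in $D_n$ and that $N_n^{(1)}-N_n^{(2)}>2\varepsilon n^{1/r-1}$ (when $p(0)>0$ both events have probability bounded away from zero, since a positive asymptotic fraction of the vertices counted in $N_n$ have no level-$n$ descendants, and the limiting point process has non-degenerate gaps), one gets $M_n^{\mathcal{D}}<N_n-\varepsilon n^{1/r-1}$ up to the usual $O(\sqrt{n\log n})$ fluctuations. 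So no Borel--Cantelli over near-maximal candidates can restore the $o(n^{1/r-1})$ precision, and the proposal does not establish the lemma; this is the genuine gap. For comparison, the paper's proof settles exactly this point in one sentence --- ``there always exists at least one such $w^*$'', i.e.\ the vertex realising $N_n$ is asserted to have a descendant in $\mathcal{D}_n$ --- which is immediate only when every particle has descendants at all later times (e.g.\ in the B\"ottcher case $p(0)=0$) and is precisely the issue you identified in the Schr\"oder case. Your obstacle is therefore real rather than an artefact of your route, but as written the lower-bound half remains unproved; closing it requires either assuming $p(0)=0$ or working with $N_n$ restricted to displacements along ancestral lines of particles in $D_n$ (a survival-thinned maximum), not the ``best surviving competitor'' device.
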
  
	\begin{proof}
		Recall $s_n$ defined in~\eqref{s_ndefi}. Fix $\varepsilon>0$ and first estimate the probability that the difference is large. Note that $X_v-N_n \leq 0$ for all $v \in \mathcal{N}_n$ and thus, with $S_w^{\setminus v} = \sum_{u \in [o,w]\setminus \{v\}} X_u$,
using Lemma~\ref{lemma_RW}
\begin{align*}
			&\P^*\left[  M_n^{\mathcal{D}}-N_n  >\varepsilon n^{1/r-1}\right] \leq  \\  \nonumber
			&\P^* \left[ \exists w \in D_n, \: \exists v \in [o,w] \text{ s.t. } X_v> s_n, \, \forall u \in [o,w]\setminus \{v\}, X_u\leq \delta n^{1/r}, \text{ and }S_w^{\setminus v}   > \varepsilon n^{1/r-1}  \right] \\ \nonumber
			& \leq n m^n \P[X>s_n] \P\left[ \tilde S_{n-1}>\varepsilon n^{1/r-1}\right] \leq n m^n \P[X>s_n] \P\left[ \tilde S_{n-1}>K \sqrt{n\log n}\right] \\ \nonumber
			& = n \cdot a n^{T/\sigma} n^{-K^2/2(1+o(1))} \to 0 
		\end{align*}
		with some constant $K$ which is sufficiently large. 	
On the other hand if the difference $M^{\mathcal{D}}_n - N_n$ is small, this means that for each $w^* \in \mathcal{D}_n$ and $v^* \in \left[o,w^*\right]$ such that $X_{v^*}=N_n$, it must hold that
		\begin{equation*}
			\sum_{u \in [o,w^*]\setminus \{ v^*\}} X_u \leq -\varepsilon n^{1/r-1}.
		\end{equation*}   
		Since, by an appeal to Proposition~\ref{prop:Nn_LIL} there always exists at least one such $w^*$, we have
			\begin{align*}
			\P^*\left[  M_n^{\mathcal{D}}-N_n  <- \varepsilon n^{1/r-1}\right] & \leq  n m^n \P[X>s_n] \cdot  \P\left[ \tilde S_{n-1} <- \varepsilon n^{1/r-1}\right] \\
				& \leq \const\cdot n^{1+ T/\sigma -K^2/2(1+o(1))}.
		\end{align*}	
 \end{proof}    
	Putting together Proposition \ref{prop:Nn_LIL} and Lemmas \ref{Mnaest} -\ref{ref:lem23} we get Lemma~\ref{comparelemma}.	
	To treat the case $r = \frac 23$ we will need a finer decomposition of $M^{\mathcal{D}}_n$. 
	
	\begin{proposition}
		Let Assumptions \ref{assumption_RW}  and \ref{as:BP} be in force. If $r =\frac 23$ then
		\begin{equation}
			\frac{M_n - \alpha n^{3/2}}{\sigma\sqrt{n}} \overset{d}\to  V_{2/3}
            \end{equation}
where the c.d.f. of $V_{2/3}$ is given by \eqref{mainsmallrcrit}.
		
	\end{proposition}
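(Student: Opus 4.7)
The plan is to follow the heuristic sketched after Proposition~\ref{prop:Nn}: at $r = 2/3$, the asymptotics is driven by particles inheriting a single big jump, with the remaining displacements along the ancestral line contributing an asymptotically independent Gaussian shift on the same scale $\sqrt n$. I would reuse the decomposition $M_n = \max\{M_n^{\mathcal{A}}, M_n^{\mathcal{B}}, M_n^{\mathcal{C}}, M_n^{\mathcal{D}}\}$. Lemma~\ref{Mnaest} applies since $2/3 < 3/4$, and together with the lemmata for $\mathcal{B}_n$ and $M_n^{\mathcal{C}}$ (valid for all $r \in (0,1)$) it gives $M_n^{\mathcal{A}}, M_n^{\mathcal{C}} \leq \alpha n^{3/2} - \sqrt n \log n$ eventually and $\mathcal B_n = \emptyset$ eventually. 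Dividing by $\sigma\sqrt n$, these three contributions drift to $-\infty$ almost surely, so it suffices to identify the limit of $(M_n^{\mathcal{D}} - \alpha n^{3/2})/(\sigma \sqrt n)$.

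A direct first-moment calculation yields $\E^*\bigl[\#\{v : |v| \leq n - C\log n,\, X_v > s_n\}\bigr] = O(n^{\const} m^{-C\log n})$, which tends to $0$ for $C$ large enough; thus with high probability every big-jump vertex $v$ sits in generations $n - C\log n \leq |v| \leq n$. For such a $v$, its descendants in generation $n$ form a truncated branching random walk of height at most $C\log n$ whose rightmost position sits at $O(\log n) = o(\sqrt n)$ above $S_v$ by a Biggins-type bound. Consequently, at scale $\sqrt n$,
\[
  M_n^{\mathcal{D}} = \max_{v\colon X_v > s_n}\bigl(X_v + (S_v - X_v)\bigr) + o(\sqrt n) = \max_{v\colon X_v > s_n} S_v + o(\sqrt n).
\]

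Conditionally on $Z$ and on the set of big-jump vertices, each $S_v - X_v$ is a sum of $|v|-1$ iid copies of the truncated variable $\hat X = X\1_{\{X < \delta n^{3/2}\}}$, which by Lemma~\ref{lem:12} has variance $1 + o(1)$ and negligible bias; hence by the CLT $(S_v - X_v)/(\sigma\sqrt n) \overset{d}\to \mathcal N(0,\sigma^{-2})$, whose c.d.f.\ is $\Phi$. For two distinct big-jump vertices $v_1, v_2$ the shared part of $S_{v_i} - X_{v_i}$ is $S_{v_1 \wedge v_2}$, which has variance $|v_1 \wedge v_2|$; a standard genealogical estimate for the supercritical tree shows $|v_1\wedge v_2| = O_\P(1)$ for typical pairs, so the shared contribution at scale $\sigma\sqrt n$ vanishes and the shifts are asymptotically independent. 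Combining the joint convergence with Proposition~\ref{prop:Nn} and the Laplace functional of the Poisson limit $\Lambda$,
\[
  \P^*\!\left[\frac{M_n^{\mathcal{D}} - \alpha n^{3/2}}{\sigma\sqrt n} \leq x \,\Big|\, Z\right]
  \to \exp\!\left(\int \bigl(\Phi(x-y) - 1\bigr)\,\mu(W,dy)\right)
  = \exp\!\left(-\int \Phi(y-x)\,\mu(W,dy)\right),
\]
using $1 - \Phi(x-y) = \Phi(y-x)$. Taking expectations with respect to $Z$ gives $H_{2/3}(x)$.

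The main obstacle is making the joint convergence of the big-jump point process together with the shifts $\{(S_v - X_v)/(\sigma\sqrt n)\colon X_v > s_n\}$ rigorous. One has to (i) quantify the asymptotic independence of the shifts using a tail estimate on $|v_1 \wedge v_2|$; (ii) replace the coupled collection by an iid one with error vanishing in the Laplace functional; and (iii) truncate $\log \Phi(x-y)$ near $y=+\infty$ so that it becomes an admissible test function for the vague convergence $\Lambda_n \Rightarrow \Lambda$ before passing to the limit.
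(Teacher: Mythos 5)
Your route coincides with the paper's own: the $\mathcal{A},\mathcal{B},\mathcal{C},\mathcal{D}$ decomposition, localization of the single big jump to the last $O(\log n)$ generations, Gaussian shifts coming from the remaining ancestral displacements, and a Laplace-functional computation against the limit of $\Lambda_n$. However, what you list as ``the main obstacle'' is precisely the heart of the proof, and it is left unproved; moreover two of the statements you lean on are not quite right as written. First, ``$|v_1\wedge v_2|=O_\P(1)$ for typical pairs'' has the wrong quantifier and the wrong order: the number of big-jump vertices (points of $\Lambda_n$ above $-T\sigma^{-1}\log n$) can be polynomially large, and to factorize the conditional law of the shifts you need a coalescence bound \emph{simultaneously for all pairs}; the correct uniform statement is $\max|v_1\wedge v_2|\leq C_2\log n$ with high probability, obtained from a first-moment bound (the paper's event $B_n$), which still makes the shared part $o(\sqrt n)$. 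Likewise ``asymptotic independence plus replacement by an iid collection'' is exactly what requires an argument: the paper obtains \emph{exact} conditional independence by discarding from the shifts all generations outside $[C_2\log n,\, n-C_1\log n]$ (the discarded contribution $\hat M_n^{\mathcal D}$ is $o_{\P^*}(\sqrt n)$ by a union bound over the big-jump lineages; note also that a Biggins-type bound is not available since the steps have no exponential moments — the post-jump contribution is of order $(\log n)^{1/r}$, not $O(\log n)$, though still negligible) and by conditioning on $Z$ and on $\{X_v:\ |v|\geq n-C_1\log n\}$ on the event $A_n^c\cap B_n^c$, so that the shift sums of distinct big-jump particles run over disjoint stretches of ancestral lines.

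Second, passing to the limit in the Laplace functional is not a routine consequence of $\Lambda_n\Rightarrow\Lambda$: the function $y\mapsto\log\Phi(x-y)$ is not compactly supported (it diverges as $y\to+\infty$ and has unbounded support to the left), while $\Lambda_n$ has exponentially many points far to the left. The paper resolves this by (a) restricting to points above $-T\sigma^{-1}\log n$ (built into $\mathcal D_n$ through $s_n$) and showing that the resulting truncation term involving $\Lambda_n(-T\sigma^{-1}\log n,\infty)$ vanishes, and (b) introducing the marked point process $\Lambda_n^*=\sum_{|v|\leq n}\delta_{(\bar X_v,\bar U_v)}$ with iid marks distributed as $\Phi_n$, proving its convergence to a Poisson process with product intensity by the same Kallenberg argument as in Proposition~\ref{prop:Nn}, and then computing the void probability of the unbounded region $A_{n,x}$ by truncating to boxes $B_R=[-R,R]^2$ and estimating, uniformly in $n$, the probability that $\Lambda_n^*$ charges $A_{n,x}\setminus B_R$ before letting $R\to\infty$. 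These are the concrete contents of your items (i)--(iii); without them your argument remains the heuristic already sketched in Section~\ref{sec:main} rather than a proof.
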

	
	\begin{proof}
		Recall \eqref{s_ndefi}, take $C_1 > T/(\sigma \log m)$,  and consider the event 
		\begin{equation*}
			A_n = \{ N_{[n-C_1 \log n] } > s_n\}.
		\end{equation*}
		As one computes directly,
		\begin{equation*}
			\P^* [ A_n ] \leq \const \cdot n^{-C_1 \log m +\sigma^{-1} T} \to 0.
		\end{equation*}
		In words, with high probability, whenever $w \in \mathcal{D}_n$ the ancestor $v$ of $w$ for which $X_v>s_n$ must come from generation at least $[n - C_1 \log  n] $. 
		Recall that for $x,y \in \mathcal{T}$ we denote by $x\wedge y$ the last common ancestor of $x$ and $y$. 
		Take $C_2 > 2 T/(\sigma \log m)$ and consider the event
		\begin{equation*}
			B_n= \{ \exists v, w \in \mathcal{T},\text{ such that } v\neq w, \: |v|, |w|\leq n, \: |v\wedge w| \geq C_2 \log n , \: X_v\wedge X_w >s_n\}.
		\end{equation*} 
		Then, since we can choose $v$ in roughly $m^n$ ways and then choose $w$ in roughly $m^{n-C_2 \log n}$ ways, we have
		\begin{equation*}
			\P^*[B_n] \leq \const \cdot m^{2n-C_2 \log n} \P[X>s_n]^2 \leq \const \cdot n^{-C_2\log m + 2\sigma^{-1}T} \to 0.
		\end{equation*}
		This means that with high probability any two particles with big displacements must be distantly related, i.e. the graph distance in $\mathcal{T}$ between the vertices in question must be sufficiently large. Let
		\begin{equation*}
			\hat{M}^{\mathcal{D}}_n=\max_{w \in \mathcal{D}_n} \left\{ \left| \sum_{ \substack{ {u \in [o,w],} \\ {  |u| \notin [C_2\log n, n-C_1 \log n ]}}} X_u \1_{\{X_u \leq \delta n^{1/r} \}} \right|\right\}.
		\end{equation*}
		Then we claim that
		\begin{equation*}
			\frac{\hat{M}^{\mathcal{D}}_n}{\sqrt{n}} \to 0 \qquad \P^*\text{- a.s.}
		\end{equation*}
		Indeed, using a union bound we can write 
		\begin{align*}
			\P^* \left[ \hat{M}_n^{\mathcal{D}} > \varepsilon \sqrt{n} \right] & \leq m^n\P[ X>s_n] \P\left[|S_{[(C_1+C_2)\log(n)]}| > \varepsilon \sqrt{n}\right] \\
				& \leq {\rm const}\cdot  \log n\cdot m^n \P[ X>s_n] \P\left[X> \varepsilon \sqrt{n} (\log n)^{-1}\right] \\
				&\leq {\rm const}\cdot  \log n\cdot n^{\rm const} \cdot \E[ |X|^{j_0}] n^{-j_0/2} (\log n)^{j_0}
		\end{align*}
		and, using \eqref{momentass}, the last expression is summable provided that $j_0$ is large enough. 
		Finally, consider
		\begin{equation*}
			\tilde{M}^{\mathcal{D}}_n=\max\left\{  X_v + \!\!\!\!\!\!\!\!\!\! \sum_{ \substack{ {u \in [o,w],} \\ {  |u| \in [C_2\log n, n-C_1 \log n ]}}}\!\!\!\!\!\!\!\!\! X_u\: : \: w \in \mathcal{D}_n \text{ s.t. } \exists v \in [o,w], \: |v| > n-C_1 \log n, X_v>s_n\right\}.
		\end{equation*}
		Since $|\tilde{M}^{\mathcal{D}}_n-M^{\mathcal{D}}_n| \leq \hat{M}^{\mathcal{D}}_n$ the above considerations imply that
		\begin{equation*}
			\frac{\tilde{M}^{\mathcal{D}}_n-M^{\mathcal{D}}_n}{ \sqrt{n}} \overset{\P^*}\to 0
		\end{equation*}
		and therefore it is sufficient to prove weak convergence of $\tilde{M}^{\mathcal{D}}_n$. Put
		\begin{equation*}
			\tilde{\Phi}_n(s) = \P\left[   S_{[n-(C_1+C_2)\log n]} \leq s\sigma\sqrt{n}, \: X_i \leq \delta n^{1/r} \mbox{ for } 1\leq i \leq n\right].
		\end{equation*}
		Note that the $X_v$'s that appear in the definition of $ \tilde{M}^{\mathcal{D}}_n$ must be some of the extremes in the collection $\{ X_v \}_{v \in \mathcal{N}_n}$ and therefore
		\begin{align*}
			& \P^* \left[ \left.  B_n^c \cap \left\{ \tilde{M}^{\mathcal{D}}_n \leq \alpha n^{3/2} + x\sigma \sqrt{n} \right\}  \right| \mathcal{T} \right]\\
			&= \E^* \left[ \left.\P^* \left[ \left.  B_n^c \cap \left\{ \tilde{M}^{\mathcal{D}}_n \leq \alpha n^{3/2} + x\sigma \sqrt{n} \right\}  \right| \: Z, \: X_v, \: |v|\geq n-C_1\log n \right] \right|  \mathcal{T} \right] \\
			& = \E^* \left[ \left. \prod_{v\in \mathcal{N}_n : X_v>s_n} \tilde{\Phi}_n \left( x - \frac{X_v -\alpha n^{3/2}}{\sigma \sqrt{n}} \right)  \cdot \1_{B_n^c} \right| \mathcal{T} \right] \\
			&=  \E^* \left[ \left.  \1_{B_n^c} \exp \left\{ \int\limits_{-T\sigma^{-1}\log n }^\infty \log\left(\tilde{\Phi}_n(x-y)\right) \: \Lambda_n(dy) \right\}  \right|  \mathcal{T}  \right] .
		\end{align*}
		Since $ \1_{B_n^c} \overset{\P^*}\to 1$ it is enough to argue that conditioned on $\mathcal{T}$ (recalling that
		$\Phi$ denotes the c.d.f. of a centred Gaussian distribution with variance $\sigma^{-2}$),
		\begin{equation*}
			 \int\limits_{-T\sigma^{-1}\log n }^\infty \log\left(\tilde{\Phi}_n(x-y)\right) \: \Lambda_n(dy)  \overset{d}\to  \int\limits_{-\infty}^\infty \log(\Phi(x-y)) \: \Lambda(dy).
		\end{equation*}
		If we put
		 \begin{equation*}
			\Phi_n(s) = \P\left[ \left.  S_{[n-(C_1+C_2)\log n ]} \leq s\sigma \sqrt{n} \right|  X_i \leq \delta n^{1/r}\mbox{ for } 1 \leq i \leq n\right]
		\end{equation*}
		then $\Phi_n(s) \to \Phi(s)$, and
		\begin{align*}
		 	&\int\limits_{-T\sigma^{-1}\log n }^\infty \log\left(\tilde{\Phi}_n(x-y)\right) \: \Lambda_n(dy) \\  
		 			=& \int\limits_{-T\sigma^{-1}\log n}^\infty \log\left(\Phi_n(x-y)\right) \: \Lambda_n(dy) \\
		 						&+\log \left( \P^*\left[X \leq \delta n^{1/r}\right]^{n-(C_1+C_2)\log n} \right) \Lambda_n (-T\sigma^{-1}\log n, \infty).
		\end{align*}
		The last term vanishes since by the  Markov inequality and the fact that conditioned on $\mathcal{T}$, $\Lambda_n (-T\sigma^{-1}\log n, \infty)$ is a binomial random variable
		\begin{align*}
			\P^* \left[ \Lambda_n (-T\sigma^{-1}\log n, \infty) > n^{2 + T\sigma^{-1}}\right] & \leq   n^{-2 - T\sigma^{-1}} \E^* \left[ \Lambda_n (-T\sigma^{-1}\log n, \infty) \right] \\
				& \leq \const \cdot n^{-2 - T\sigma^{-1}} m^n \P\left[X > \alpha n^{1/r} - T n^{1/r-1} \log n \right]  \\ &\leq \const \cdot n^{-2}
		\end{align*}
		and thus $ n^{-3-T\sigma^{-1}} \Lambda_n (-T\sigma^{-1}\log n, \infty) \to 0$ $\P^*$-a.s. which implies that $\P^*$-a.s.
		\begin{equation*}
			\log\left( \P^*\left[X \leq \delta n^{1/r}\right]^{n-(C_1+C_2)\log n} \right) \Lambda_n (-T\sigma^{-1}\log n, \infty) \to 0.
		\end{equation*}
		In order to analyse $  \int_{-T\sigma^{-1}\log n}^{\infty} \log\left(\Phi_n(x-y)\right) \: \Lambda_n(dy)$ we will first introduce a point process $\Lambda_n^*$ which is a marked version of $\Lambda_n$, show that it is convergent and then explain how $\Lambda^*_n$ is related to our random integral.
		Consider a family of iid random variables $\left\{ U_v^{(n)} \right\} _{ v \in \mathcal{T} }$ independent from $Z$ and $\{X_v\}_{v \in \mathcal{T}}$ with common distribution $\Phi_n$ and define a process on $\R^2$ given via
		\begin{equation*}
			\Lambda_n^* = \sum_{v \in \mathcal{N}_n} \epsilon_{\left(\bar{X}_v ,U_v \right)}, \text{ where } \bar{X}_v = \frac{X_v -\alpha n^{1/r}}{\sigma \sqrt{n}}.
		\end{equation*}
		Then, conditioned on $Z$, $\Lambda_n^*$ is a binomial point process. 
		Note that, since $r =\frac{2}{3}$, for $A = (t_1,t_2] \times (s_1, s_2]$,
		\begin{equation*}
			  m^n\P\left[\left( \frac{X -\alpha n^{1/r} }{\sigma \sqrt{n}}, U^{(n)}\right) \in A \right] \to \int_A \frac{a\sigma}{\sqrt{2 \pi }}e^{-t^2 \sigma^2/2}e^{-s} \: dtds.
		\end{equation*}
		Using exactly the same arguments as in the proof of Proposition \ref{prop:Nn} for the convergence $\Lambda_n \to \Lambda$, one can show that conditioned on $\mathcal{T}$, $\Lambda_n^* \to \Lambda^*$, where conditioned on $\mathcal{T}$, $\Lambda^*$ is a Poisson random measure with intensity 
		\begin{equation*}
			\mu^*(W, dt,ds) = a\rho W  \frac{\sigma}{\sqrt{2 \pi }}e^{-s^2 \sigma^2/2}e^{-t} \: dtds. 
		\end{equation*} 
		Now note that
		\begin{equation*}
			 \E^* \left[ \left.\exp \left\{  \int_{-T\sigma^{-1}\log n} \log\left(\Phi_n(x-y)\right) \: \Lambda_n(dy) \right\} \right|  \mathcal{T}  \right] = \P^* \left[ \left. \Lambda^*_n(A_{n,x}) =0 \right|  \mathcal{T} \right],
		\end{equation*}
		where
		\begin{equation*}
			A_{n,x} = \left\{ \left. (t,s) \in \R^2 \: \right| \: t+s > x, \: t \geq - T \sigma^{-1}\log n  \right\}. 
		\end{equation*}
		We will argue that $\P^*$-as
		\begin{equation}\label{jointconv}
			 \P^* \left[ \Lambda^*_n(A_{n,x}) =0 | \mathcal{T} \right] \to  \P^* \left[\Lambda^*(A_{\infty,x}) =0 | \mathcal{T}\right],
		\end{equation}
		where $A_{\infty,x} = \bigcup_{n \geq 1} A_{n,x}$. Let for $R>0$, $B_R = [-R,R]^2 \subseteq \R^2$. By the merit of the weak convergence of $\Lambda_n^*$ to $\Lambda^*$,
		\begin{equation}\label{eq:4:claim}
			\P^* \left[  \Lambda^*_n(A_{n,x}\cap B_R) =0  | \mathcal{T} \right]  \\  \to \P^* \left[\Lambda^*(A_{\infty,x}\cap B_R) =0 | \mathcal{T} \right]\quad \P^*-\mbox{as}.
		\end{equation}
		Next, note that for some $\Delta_n \to 0$ a.s. and some sufficiently large constant ``${\rm const}$'', we have almost surely
		\begin{align*}
			\P^*\left[ \left. \Lambda_n^* \left(  A_{n,x} \cap( ( R, \infty)\times \R) \right) >0 \: \right| \: \mathcal{T} \: \right] & \leq Y_n \P\left[ X>\alpha n^{1/r} + R\sigma\sqrt{n} \right] \\
				& \leq \const \cdot (1+\Delta_n)W e^{-R}.
		\end{align*}
		To treat the other component of $(A_{n,x}\cap B_R)^c$ write
		\begin{align*}
			&\P^*\left[ \left. \Lambda_n^* \left(  A_{n,x} \cap(  \R \times ( R, \infty) \right) >0 \: \right|  \mathcal{T}  \right] \\
				& \leq Y_n \P\left[ X>s_n, \: U^{(n)}> R , \: X +\sigma\sqrt{n} U^{(n)} > \alpha n^{1/r} + x\sigma\sqrt{n} \right] \\
				& \leq \const \cdot (1+\Delta_n)W  m^n  \P\left[ X>s_n, \: U^{(n)}> R , \: X +\sigma\sqrt{n} U^{(n)} > \alpha n^{1/r} + x\sigma\sqrt{n} \right].
		\end{align*}
		We can estimate the last term via
		\begin{align*}
			& m^n  \P\left[ X>s_n, \: U^{(n)}> R, \: X +\sigma\sqrt{n} U^{(n)} > \alpha n^{1/r} + x\sigma\sqrt{n} \right] \\ 
			& \leq m^n  \P\left[   \frac{X-\alpha n^{1/r}}{\sqrt{n} } \geq 0, \: U^{(n)}> R  \right] \\
			&+\sum_{j= 1}^{T \log n} m^n  \P\left[   \frac{X-\alpha n^{1/r}}{\sqrt{n} } \in (- j, - (j-1)), \: U^{(n)}> R, \: X + \sigma\sqrt{n} U^{(n)} \geq \alpha n^{1/r} + \sigma x \sqrt{n} \right]\\
			&\leq  \const \cdot e^{-\sigma^2R^2/2} +  \const \cdot \frac{(\log n)^2}{\sqrt{n}} + \const  \cdot \sum_{j= 1}^{T\log n } e^{\sigma j} \cdot  e^{ -((\sigma x +j)\wedge R \sigma)^2/2}\\
			&\leq \const \cdot e^{-R/2}, 
		\end{align*}
		where the last inequality holds provided that $R>0$ is sufficiently big. Therefore  
		\begin{equation*}
			 \P^* \left[ \left. \Lambda^*_n(A_{n,x}) =0 \right|  \mathcal{T} \right] = \P^*\left[ \left. \Lambda_n^* \left(  A_{n,x} \cap B_R \right) =0 \: \right|  \mathcal{T}  \right] + O(e^{-R}). 
		\end{equation*}
		Taking $n \to \infty$ followed by $R \to \infty$ proves~\eqref{jointconv}. This concludes the proof since $\Lambda^*$ conditioned on $\mathcal{T}$ is a Poisson random measure and so
		\begin{align*}
			  \P^* \left[ \left. \Lambda^*(A_{\infty,x}) =0 \right| \: Z \: \right]  &= \exp \left\{  -  \mu^* (W, A_{\infty,x}) \right\}  \\
			 &=  \exp \left\{ - a\rho W  \int_{A_{\infty,x}} \frac{\sigma}{\sqrt{2 \pi }}e^{-s^2 \sigma/2}e^{-t} \: dsdt \right\} \\
			 &= \exp \left\{ - a\rho W  \int (1-\Phi(x-t))e^{-t} \: dt \right\}.
		\end{align*}
	\end{proof}

	We finally consider the lower and upper space-time envelopes. 
\begin{proof}[Proof of \eqref{limsupandinf}]
		We first establish that	$\P^*$-a.s.
		\begin{equation}\label{eq:4:lastclaim}
			-\infty <  \liminf_{n \to \infty } \frac{M_n^{\mathcal{D}} -N_n}{ \sqrt{n\log n}} \leq \limsup_{n \to \infty } \frac{M_n^{\mathcal{D}} -N_n}{ \sqrt{n\log n}} <  \infty.
		\end{equation}
		This can be shown using the same arguments as in the proof of Lemma~\ref{ref:lem23}. Indeed, we can use the first formula in the proof of Lemma~\ref{ref:lem23}, to get for $K>0$,
		\begin{align*}
			\P^*\left[  M_n^{\mathcal{D}}-N_n  >K \sqrt{n\log n}\right] & \leq   n m^n \P[X>s_n] \P\left[ \tilde S_{n-1}>K \sqrt{n\log n}\right] \\
			& = n \cdot a n^{T/\sigma} n^{-K^2/2(1+o(1))} \to 0
		\end{align*}
		provided that $K$ is taken sufficiently large. Similarly, as in the last display of the proof of Lemma~\ref{ref:lem23},
			\begin{align*}
			\P^*\left[  M_n^{\mathcal{D}}-N_n  <- K \sqrt{n\log n} \right] & \leq  n m^n \P[X>s_n] \cdot  \P\left[ \tilde S_{n-1} <-K \sqrt{n\log n}\right] \\
				& = n^{1+ T/\sigma -K^2/2(1+o(1))}.
		\end{align*}	
		The first formula in \eqref{limsupandinf} follows if we combine~\eqref{eq:4:lastclaim} with 
		\begin{equation*}
			\limsup_{n \to \infty} \frac{N_n-\alpha n^{3/2}}{ \sqrt{n} \log n } = \sigma
		\end{equation*}
		which comes from the last part of Proposition~\ref{prop:Nn_LIL} by testing with $\psi(x) = (1 \pm \varepsilon)\log x $. The 
		second formula in \eqref{limsupandinf} lower follows from~\eqref{eq:4:lastclaim} and
		\begin{equation*}
					\liminf_{n \to \infty} \frac{N_n-\alpha n^{3/2}}{ \sqrt{n \log n} } = 0
		\end{equation*}
		which comes from the first part of~Proposition~\ref{prop:Nn_LIL}.
	\end{proof}

\section*{Acknowledgement}
Piotr Dyszewski  was partially supported by the National Science Centre, Poland
 (Sonata Bis, grant number DEC-2014/14/E/ST1/00588). This work was initiated while the first author was visiting the Department of Mathematics, Technical University of Munich in February 2019. He gratefully acknowledges financial support and hospitality.
 We are very much indepted to two anonymous referees for reading carefully and detecting several glitches in the first version of the article.

\section*{Appendix}	
In this appendix, we provide the proof of Lemma \ref{lem:12} and Lemma \ref{sumstat}.
\begin{proof}[Proof of Lemma  \ref{lem:12}] The arguments are similar as in the proof of (18) in~\cite{G00}. 
	Take $k$ as the smallest integer with $k > \frac{2-r}{2(1-r)}$ and use the inequality $e^x \leq 1+x + \ldots +\frac{x^{2k}}{(2k)!}e^{\max\{ x, 0\}}$ to get 
	\begin{equation}\label{develop}
		 \E \left[\exp\left\{   \lambda x_n \hat X\right\}\right]  \leq  1+ \sum_{j=1}^{2k-1} \frac{\lambda ^jx_n^j}{j!}  \E \left[\hat{X}^j \right]	
		 	+ \frac{\lambda^{2k}x_n^{2k}}{(2k)!}  \E \left[ \hat X^{2k}\exp\left\{  \lambda x_n \max\{\hat X, 0\}\right\} \right].
	\end{equation}
	Since $X$ is centred, $\E [\hat{X}] \leq 0$. Due to \eqref{momentass}, the moments $E[\hat{X}^j]$ for $j \leq 2k$ are 
bounded by some constant $C_j$.
The sum can be bounded via
	\begin{align*}
		 \sum_{j=1}^{2k-1}  \frac{\lambda^jx_n^j}{j!}  \E \left[\hat{X}^j \right]  \leq &  \sum_{j=2}^{2k-1}  \frac{\lambda^jx_n^j}{j!}  \E \left[\hat{X}^j \right]  \leq  \frac{\lambda^2x_n^2}{2}  \E \left[\hat{X}^2 \right] + O\left(\frac{1}{  n^{3(1/r -1)}}\right)\\
		 	& =  \frac{\lambda^2x_n^2}{2}+ o\left(\frac{1}{  n^{2(1/r -1)}}\right).
	\end{align*} 
	To treat the last term in \eqref{develop}, we first note that
	the integral 
	\begin{equation*}
		 \E \left[ \hat{X}^{2k}\exp\left\{   \lambda x_n\max\{\hat X, 0\} \right\} \1_{\{ X < 0 \}}\right]
	\end{equation*}
	remains bounded as $n \to \infty$ and so in the sequel we only treat the expectation over the set $\{ X \geq 0 \}$. 
 It is hence sufficient to show that 
	\begin{equation}\label{remafbound}
		\frac{1}{n^{(1+\eta)/r} } \E \left[ \hat X^{2k}\exp\left(  \lambda x_n \frac{\hat X}{\alpha^{1-r} n^{1/r -1}} \right) \1_{\{ X \geq 0 \}}\right] = o\left(\frac{1}{  n^{2(1/r-1)}}\right),
	\end{equation}
	where $\eta  = 2k(1-r)-1>(1-r)>0$. We will use the following inequality for $ K = \delta n^{1/r}$, $\varphi(s) = s^{2k}\exp\left\{   \lambda x_n s\right\} $ with  $\varphi(0)=0$,
	\begin{equation*}
		\E [\varphi(X) \1_{\{K>X>0\}}] \leq \int_0^K \varphi'(s) \P[X > s] \: ds.
	\end{equation*}
We have
\begin{multline}\label{splithave} 
\frac{1}{n^{(1+\eta)/r}} \int_0^K \varphi'(s) \P[ X > s]ds  \\ 
\leq  \frac{1}{n^{(1+\eta)/r}}  \int_0^{\delta n^{1/r}} \lambda x_n s^{2k} e^{\lambda x_n s} \P[ X > s] ds \\+ \frac{1}{n^{(1+\eta)/r}} \int_0^{\delta n^{1/r}} 2k s^{2k-1} e^{\lambda x_n s} \P[ X > s] ds.  
\end{multline}
For the first term on the r.h.s. of \eqref{splithave}, we have
	\begin{align*}
		&  \frac{1}{n^{(1+\eta)/r}}  \int_0^{\delta n^{1/r}} \lambda x_n s^{2k} \exp\left\{   \lambda x_n s \right\} \P[ X > s] ds \\
		& = \frac{1}{n^{(1+\eta)/r}}   \int_0^{\delta n^{1/r}}  \lambda x_n a(s) s^{2k}\exp\left\{   \lambda x_n s -\lambda s^r \right\}  ds  \\
		& = x_n\delta  \frac{1}{n^{\eta/r}}\int_{0}^1 a(\delta n^{1/r} s ) \lambda \delta^{2k} n^{2k/r} s^{2k}\exp\left\{   \lambda n\delta s\left( n^{1/r -1} x_n  -  \delta^{r-1} s^{r-1} \right\} \right)  ds \\
	&\leq  {\rm const}\cdot\frac{1}{n^{\eta/r}}n^{2k/r}  \int_{0}^1 s^{2k}\exp\left\{   \lambda n\delta s\left( n^{1/r -1} x_n  -  \delta^{r-1} s^{r-1} \right) \right\}  ds\, .	
	\end{align*}
	The exponent present in the integral is negative for sufficiently large $n$, since $n^{1/r-1}x_n \to \alpha^{r-1}< \delta^{r-1}\leq \delta^{r-1} s^{r-1}$ for $s \in (0,1]$. To see that the above expression is $o\left(\frac{1}{  n^{2(1/r-1)}}\right)$ take $\varepsilon \in \left(0, \frac{1}{2k+1}\right)$ s.t. $\varepsilon < \frac{1}{r(1+r)}$ and write the integral as a sum of integrals over 
	$(0, n^{-1/r +\varepsilon}]$, $ (n^{- 1/r +\varepsilon}, n^{-1+r^2\varepsilon})$ and $[n^{-1+r^2\varepsilon}, 1)$. The first one is bounded via
	\begin{align*}
		&  \frac{1}{n^{\eta/r}}n^{2k/r}   \int_{0}^{n^{-1/r+\varepsilon}} s^{2k}\exp\left\{\lambda n\delta s\left( n^{1/r -1} x_n  -  \delta^{r-1} s^{r-1} \right) \right\}  ds \\
		& \leq {\rm const} \cdot n^{1-2/r-\eta/r+(2k+1)\varepsilon} \leq  {\rm const} \cdot n^{2(1-1/r)-\eta/r} = o\left(\frac{1}{  n^{2(\frac 1r-1)}}\right).
	\end{align*}
	The integral over the second interval has the following estimate
	\begin{align*}
		&  \frac{1}{n^{\eta/r}}n^{2k/r}\int_{n^{-1/r+\varepsilon}}^{n^{-1+r^2\varepsilon}} s^{2k}\exp\left\{   \lambda n\delta s\left( n^{1/r -1} x_n  -  \delta^{r-1} s^{r-1} \right) \right\}  ds \\
		& \leq  {\rm const} \cdot n^{2k/r}\exp ( 2\lambda\delta n^{r^2\varepsilon} - \lambda\delta^rn^{r\varepsilon}) =  o\left(\frac{1}{  n^{2(1/r-1)}}\right).
	\end{align*}
	The last part can be bounded by
	\begin{align*}
		&   \frac{1}{n^{\eta/r}} n^{2k/r}\int_{n^{-1+r^2\varepsilon}}^1  s^{2k}\exp\left\{   \lambda n\delta s\left( n^{1/r -1} x_n  -  \delta^{r-1} s^{r-1} \right) \right\}  ds \\
		& \leq  {\rm const} \cdot n^{2k/r}\exp \{ \lambda\delta n^{r^2\varepsilon } ( \alpha^{r-1} - \delta^{r-1} + o(1) ) \} =  o\left(\frac{1}{  n^{2(1/r-1)}}\right).
	\end{align*}
       The second term on the r.h.s. of \eqref{splithave} is treated in the same way.
	This proves \eqref{remafbound}
and concludes the proof of the lemma. 
\end{proof}

\begin{proof}[Proof of Lemma  \ref{sumstat}]
Put 
		\begin{equation*}
			q_n =  \frac{(1+\varepsilon) r \log m}{2 \alpha} n^{2-1/r} \quad \mbox{and} \quad t_n = \alpha n^{1/r} + q_n
		\end{equation*}
		and consider the following decomposition with $\delta \in \left(\frac{\alpha}{2^{1/r}}, \alpha\right)$, 
		\begin{align*}
			& m^n \P[S_n \geq t_n]   \\
				 =  & m^n \P\left[S_n \geq t_n, \text{ and } \forall k \leq n,  X_k < \delta n^{1/r}\right]\\
				& +  m^n \P\left[S_n \geq t_n, \text{ and } \exists j\neq i \leq n \text{ s.t. } X_j\wedge X_i \geq \delta n^{1/r}\right] \\
				& + m^n \P\left[S_n \geq t_n, \: \exists j \leq n \text{ s.t. } X_j \in \left[ \delta n^{1/r}, \alpha n^{1/r} -  3q_n\right], \text{ and }\forall k \neq j,  X_k < \delta n^{1/r}\right] \\
				&  + m^n \P\left[S_n \geq t_n, \text{ and }  \exists j \leq n \text{ s.t. }
 X_j > \alpha n^{1/r} -  3 q_n, \text{ and }\forall k \neq j,  X_k < \delta n^{1/r}\right]\\
&  = J_1(n) +J_2(n) +J_3(n) +J_4(n).
		\end{align*}
We have to show that all these terms are summable in $n$.
			As before, we write $\hat{X}_k = X_k \1_{\left\{  X_k< \delta n^{1/r} \right\}}$ and $\hat{S}_n = \sum_{k=1}^n\hat{X}_k$.
		Using the Markov inequality and Lemma~\ref{lem:12} we can estimate $J_1(n)$ in the following way
		\begin{align*}
			J_1(n) & = m^n\P\left[\hat{S}_n \geq t_n \right] = m^n\P\left[ \lambda \frac{\alpha^r n +\lambda \alpha^{2(r-1)} n^{3-2/r}}{t_n} \hat{S}_n \geq \lambda \left(\alpha^r n +\lambda \alpha^{2(r-1)}n^{3-2/r}\right) \right] \\
				& \leq \exp \left\{ - \lambda^2 \alpha^{2(r-1)} n^{3-2/r} \right\} \E\left[ \exp \left\{ \lambda \frac{\alpha^r n + \lambda \alpha^{2(r-1)} n^{3-2/r}}{t_n} \hat{S}_n \right\} \right]\\
				& = \exp \left\{ - \lambda^2 \alpha^{2(r-1)} n^{3-2/r} \right\} \E\left[ \exp \left\{   \lambda \frac{\alpha^r n + \lambda \alpha^{2(r-1)} n^{3-2/r}}{t_n} \hat{X}_1 \right\}  \right]^n\\
				& \leq \exp\left\{ - \lambda^2 \alpha^{2(r-1)}n^{3-2/r} \right\} \exp\left\{ n\lambda^2  \frac{(\alpha^r n + \lambda \alpha^{2(r-1)} n^{3-2/r})^2}{2 t_n^2}  + o(n^{3-2/r})  \right\} \\
				&  \leq \exp\left\{ - \lambda^2 \alpha^{2(r-1)} n^{3-2/r} \right\} \exp\left\{ \frac{\lambda^2 \alpha^{2(r-1)}}{2} n^{3-2/r}  + o(n^{3-2/r})  \right\} \\
				& = \exp\left\{ -\frac{\lambda^2 \alpha^{2(r-1)}}{2} n^{3-2/r}  + o(n^{3-2/r})  \right\}.
		\end{align*} 
		Providing a bound for $J_2(n)$ is easy: we have
		\begin{align*}
			J_2(n) & \leq n^2 m^n \P\left[X > \delta n^{1/r}\right]^2 \leq \const\cdot n^2 \exp \{ \lambda (\alpha^r - 2 \delta^r)n\} 
		\end{align*}
		where the exponent on the right hand side is negative due to the choice of $\delta$. The bound for $J_3(n)$ goes along similar lines as the one for $J_1(n)$. Put 
		\begin{equation*}
			 \tilde{S}_{n-1} = \sum_{k=1}^{n-1}\hat{X}_k, \quad p_n =  \frac{(1+\varepsilon) \lambda \alpha^{2(r-1)}}{2}n^{3-2/r} 
		\end{equation*}
		and write
		\begin{align*}
			J_3(n) &\leq m^n n \P \left[ \tilde{S}_{n-1} + X_n > t_n, X_n < \alpha n^{1/r} -3q_n \right] \\
			& = m^n n \P \left[ \lambda \frac{\alpha^r n  +p_n}{t_n} \left(  \tilde{S}_{n-1} + X_n \right) > \lambda \left(\alpha^r n+p_n\right), X_n < \alpha n^{1/r}-3q_n  \right].
		\end{align*}
		Apply the Markov inequality and a bound for the exponential moment of $\tilde{S}_{n-1}$ as we did it for $\hat{S}_n$ to obtain
		\begin{equation*}
			J_3(n) \leq   \exp\left\{ -\varepsilon \frac{\lambda^2 \alpha^{2(r-1)}}{2} n^{3-2/r}  + o(n^{3-2/r})  \right\} \E\left[  \exp \left\{  \lambda \frac{\alpha^r n + p_n}{t_n} X_n \right\}   \1_{\left\{ X_n \leq \alpha n^{1/r} - 3q_n \right\}} \right].
		\end{equation*}
		It remains to show that
			\begin{equation}\label{eq:forJ3}
			\E\left[ \exp\left\{  \lambda \frac{\alpha^r n + p_n}{t_n} \tilde{X}_n \right\} \1_{\left\{ X_n \leq \alpha n^{1/r} - 3q_n \right\}} \right] =  \exp\left\{ o(n^{3-2/r})  \right\}.
		\end{equation}
		To do so, one can employ the final steps of the proof of Lemma~\ref{lem:12}. That is, the integral 
		\begin{equation*}
			\E\left[ \exp\left\{  \lambda \frac{\alpha^r n + p_n}{t_n} X_n \right\} \1_{\{ X_n\leq 0 \}} \right]
		\end{equation*}
		is bounded. To treat the integral corresponding to the positive values of $X_n$ use the formula
		\begin{equation*}
			\E [\psi(X_n)\1_{\{ K\geq X_n>0\}} ] \leq \int_0^K \psi'(s) \P[X_n>s]ds + \psi(0)\P[X_n> 0]
		\end{equation*}
		with $\psi(s) = \exp\left\{  \lambda \frac{\alpha^r n + p_n}{t_n} s\right\} $ and $K = t_n - 3q_n$. Since $\psi(0)\P[X_n> 0]\leq 1$ we will focus on the integral for which we have
		\begin{align*}
			&\int_0^K \psi'(s) \P[X_n>s]ds  \leq {\rm const} \cdot \int_0^{\alpha n^{1/r}-2q_n}a(s) \exp \left\{  \lambda \frac{\alpha^r n + p_n}{t_n} s - \lambda s^r \right\} ds \\
				& \leq{\rm const}\cdot  n^{2/r}  \int_0^1\exp \left\{  \lambda \frac{(\alpha n^{1/r}-2q_n)}{t_n} s( \alpha^r n + p_n -  t_n (\alpha n^{1/r}-2q_n)^{r-1}) \right\} ds.
		\end{align*}
		To check that the integral is of the form $\exp\left\{ o(n^{3-2/r})  \right\}$ for $r \in (\frac{1}{2},1)$ and sufficiently large $n$ use $r p_n = \alpha^{r-1}n^{1-1/r}q_n$  and write
		\begin{align*}
			\alpha^r n + p_n -  t_n (\alpha n^{1/r}-2q_n)^{r-1} &= p_n - 2(1-r)\alpha^{r-1}n^{1-1/r}q_n - \alpha^{r-1}n^{1-1/r}q_n + o\left(n^{3-2/r}\right) \\
				&= p_n(1-r)(1-2r) + o\left(n^{3-2/r}\right).
		\end{align*}	
		A straightforward upper bound for the integral implies~\eqref{eq:forJ3}. To estimate the last remaining term $J_4(n)$ take $N > \frac 4\varepsilon \vee 4$ and write
		\begin{align*}
			J_4(n)  \leq & \sum_{k=-1}^{3N-1}n m^n \P\left[X_n+\tilde{S}_{n-1} \geq t_n, \:  \frac{k}{N} q_n\leq \alpha n^{1/r} -X_n\leq \frac{k+1}{N}q_n\right] \\
				& + nm^n\P\left[X_n+\tilde{S}_{n-1} \geq t_n, \:  X_n\geq \alpha n^{1/r}+ \frac 1N q_n\right].
		\end{align*}
		To treat the sum just note that
		\begin{align*}
			&  \P\left[X_n+\tilde{S}_{n-1} \geq t_n, \:  \frac{k}{N} q_n\leq \alpha n^{1/r} -X_n\leq \frac{k+1}{N}q_n\right] \\
			&\leq  \P\left[X_n \geq \alpha n^{1/r} - \frac{k+1}{N}q_n, \: \tilde{S}_{n-1} \geq \frac {N+k}Nq_n \right]\\
			& = m^{-n} \exp \left\{\lambda^2r^2\alpha^{2(r-1)}(1+\varepsilon)n^{3-2/r} \frac 1{8N^2} (4N(k+1) - (1+\varepsilon)(N+k)^2 ) (1+o(1)) \right\} \\
			& \leq m^{-n} \exp \left\{ \lambda^2r^2\alpha^{2(r-1)}(1+\varepsilon)n^{3-2/r} \frac 1{8N^2} (-(N-k)^2 +N(4-\varepsilon N/2) )(1+o(1))\right\}.
		\end{align*}
		The last term in the decomposition of $J_4(n)$ is also summable since
		\begin{align*}
			 &nm^n\P\left[X_n+\tilde{S}_{n-1} \geq t_n, \:  X_n\geq \alpha n^{1/r}+ \frac 1N q_n\right] \leq  nm^n\P\left[X_n\geq \alpha n^{1/r}+ \frac 1N q_n\right]\\& =  \exp\left\{ -\frac{r^2}{2N} \lambda ^2\alpha^{2(r-1)}(1+\varepsilon)n^{3-2/r}(1+o(1)) \right\}.
		\end{align*}
\end{proof}

\bibliographystyle{amsplain}

\begin{thebibliography}{10}
%
%
\bibitem{A13}
E.~A{\"i}d{\'e}kon, \emph{Convergence in law of the minimum of a branching
  random walk}, Ann. Probab. \textbf{41} (2013), no.~3A, 1362--1426.

\bibitem{AN04}
K.~B.~Athreya and P.~E.~Ney, \emph{Branching processes}, Dover Publications,
  Inc., Mineola, NY, 2004.

%
%
\bibitem{BHSP17}
A.~Bhattacharya, R.~S.~Hazra, P.~Roy, \emph{Point process convergence for branching random walks with regularly varying steps},  Ann. Inst. Henri Poincaré Probab. Stat. \textbf{53}, (2017), 802–-818.

\bibitem{BHSP18}
A.~Bhattacharya, R.~S.~Hazra, P.~Roy, \emph{Branching random walks, stable point processes and regular variation},  Stochastic Process. Appl. \textbf{128} (2018), no.~1, 182–-210.

\bibitem{BMPR19}
A.~Bhattacharya, K.~Maulik, Z.~Palmowski, P.~Roy      ,
\emph{Extremes of multitype branching random walks: heaviest tail wins}, 
 Adv. in Appl. Probab. \textbf{51} (2019), no.~2, 514–-540. 

\bibitem{B76}
J.~D.~Biggins, \emph{The first- and last-birth problems for a multitype
  age-dependent branching process}, Advances in Appl. Probability \textbf{8}
  (1976), no.~3, 446--459.
  
 \bibitem{BB93}
  J.~D.~Biggins, N.~H.~Bingham. \emph{Large deviations in the supercritical branching process}, Advances in Appl. Probability, 25.4 (1993): 757-772.
  
%





\bibitem{DZ10}
A.~Dembo and O.~Zeitouni, \emph{Large deviations techniques and applications},
  Stochastic Modelling and Applied Probability, vol.~38, Springer-Verlag,  
  Berlin, 2010.

\bibitem{DDS08}
D.~Denisov, A.~B.~Dieker,  V.~Shneer, \emph{ Large deviations for random walks under subexponentiality: The big-jump domain}. Ann. Probab. 36 (2008), no. 5, 1946--1991.

\bibitem{Durrett1983}
R. Durrett, \emph{{Maxima of branching random walks}}, Zeitschrift
  f{\"{u}}r Wahrscheinlichkeits\-theorie und Verwandte Gebiete, 62 (1983), no.~2, 165--170.


\bibitem{RD19}
R.~Durrett, \emph{Probability: theory and examples} (Vol. 49). Cambridge university press, 2019

\bibitem{DGH20}
P.~Dyszewski, N. Gantert, T. Höfelsauer, \emph{Large deviations for the maximum of a branching random walk with stretched exponential tails} 
Electron. Commun. Probab. 25 1 - 13, 2020. 

%
%

\bibitem{EichLoe}
P.~Eichelsbacher and M.~Löwe. \emph{Moderate deviations for iid random variables. ESAIM: Probability and Statistics}, 7(2003), 209--218.

\bibitem{modf18}
V.~Féray, P.~L.~Méliot and A.~Nikeghbali, \emph{Mod-$\phi$ Convergence Normality Zones and Precise Deviations}, 2018.

\bibitem{FW06}
K.~Fleischmann and V.~Wachtel, \emph{Lower deviation probabilities for supercritical Galton-Watson processes}, Ann. H. Poincaré Probab. Statist. \textbf{43} (2007), no.~2, 233--255. 
  
\bibitem{G00}
N.~Gantert, \emph{The maximum of a branching random walk with semiexponential increments},
Annals of Probability, \textbf{28} (2000), no~3, 1219–-1229.  
 
\bibitem{GH18}
N.~Gantert and T.~Höfelsauer, \emph{Large deviations for the maximum of a branching random walk}, Electron. Commun. Probab. \textbf{23} (2018), no.~34, pp. 1-12.

\bibitem{GRR14} 
N.~Gantert, K.~Ramanan and F.~Rembart, \emph{Large deviations for weighted sums of stretched exponential random variables}. Electron. Commun. Probab,  \textbf{19} (2014), no.~41, 1--14. 

\bibitem{H74}
J.~M.~Hammersley, \emph{Postulates for subadditive processes}, Ann. Probability
  \textbf{2} (1974), 652--680.

\bibitem{HS09}
Y.~Hu and Z.~Shi, \emph{Minimal position and critical martingale convergence in
  branching random walks, and directed polymers on disordered trees}, Ann.
  Probab. \textbf{37} (2009), no.~2, 742--789.


\bibitem{K83}
O.~Kallenberg, \emph{Random Measures}, (1983) 3rd Edition. Akademie–Verlag, Berlin


\bibitem{K75}
J.~F.~C.~Kingman, \emph{The first birth problem for an age-dependent branching
  process}, Ann. Probability \textbf{3} (1975), no.~5, 790--801.
%
%

\bibitem{M16}
P.~Maillard, \emph{The maximum of a tree-indexed random walk in the big jump domain}.
ALEA Lat. Am. J. Probab. Math. Stat. \textbf{14} (2016) no.~ 2, 545–-561. 

\bibitem{N69}
A.~V.~Nagaev, \emph{Integral limit theorems taking into account large deviations when Cram\'er's condition does not hold I}.
Theory of Probability and its Applications, \textbf{14} (1969) no.~1 , 51--64
%
%
\bibitem{RHRSpreprint}
S.~Ray, R.~S.~Hazra, P.~Roy, P.~Soulier \emph{Branching random walk with infinite progeny mean: a tale of two tails}. Preprint.
https://arxiv.org/abs/1909.08948

\bibitem{R87}
S.~I.~Resnick, \emph{Extreme values, regular variation and point processes}. (1987). Reprint 2008. Springer, New York


\bibitem{S15}
Z.~Shi,  \emph{Branching random walks}. Cham: Springer (2015).

%

\end{thebibliography}

\Addresses

\end{document}